\documentclass[11pt]{amsart}
\usepackage[margin=1.1in]{geometry}
\usepackage{amsmath,amssymb,amsthm,mathtools,microtype}
\usepackage[hidelinks]{hyperref}
\usepackage{xcolor}

\newtheorem{theorem}{Theorem}[section]
\newtheorem{lemma}[theorem]{Lemma}
\newtheorem{proposition}[theorem]{Proposition}
\newtheorem{corollary}[theorem]{Corollary}
\theoremstyle{remark}
\newcommand{\C}{\mathbb C}\newcommand{\N}{\mathbb N}
\newcommand{\Hs}{\mathcal H^s}
\newcommand{\dist}{\operatorname{dist}}\newcommand{\diam}{\operatorname{diam}}
\newcommand{\Desc}{\operatorname{Desc}}
\DeclareMathOperator{\gen}{gen}

\title[Dimension drop]{Failure of dimension drop for harmonic measure on Ahlfors regular boundaries of dimension close to $1$}

\author{Yingying Cai}
\address{Yingying Cai: Departament de Matemàtiques, Universitat Autònoma de Barcelona, Barcelona, Catalonia.}
\email{yingying.cai@uab.cat}

\author{Cole Jeznach}
\address{Cole Jeznach: Departament de Matemàtiques, Universitat Autònoma de Barcelona, Barcelona, Catalonia.}
\email{cole.jeznach@uab.cat}

\author{Xavier Tolsa}
\address{Xavier Tolsa: ICREA, Barcelona, Departament de Matemàtiques, Universitat Autònoma de Barcelona, and Centre de Recerca Matemàtica, Barcelona, Catalonia.}
\email{xavier.tolsa@uab.cat}

\date{\today}

\thanks{The authors were partially supported by MICIU (Spain) under the grant PID2024-160507NB-I00. C.J. was supported by NSF-DMS-2503326.
}

\begin{document}

\begin{abstract}
For every $0<s<1$ we construct a compact $s$-Ahlfors regular set
$K\subset\mathbb C$ for which harmonic measure of $\mathbb C\setminus K$
with pole at infinity is comparable to $\mathcal H^s|_K$. In particular,
there is no dimension drop for any dimension $s<1$. The construction is
$N$-adic and based on configurations close to the $N$-th roots of unity.
We adjust the positions of the children by small polynomial deformations
so that the logarithmic potential of the natural Bernoulli measure
becomes constant in the limit. A first-mode correction gives the range
$s<2/3$, while corrections of arbitrary finite order yield the full range
$0<s<1$.
\end{abstract}

\maketitle

\setcounter{tocdepth}{1}
\tableofcontents

\section{Introduction}

Let $\Omega\subset\mathbb R^n$ be a connected domain with compact
boundary. For $x\in\Omega$, harmonic measure $\omega_\Omega^x$ is the
measure representing solutions of the Dirichlet problem for the Laplace
equation. More precisely, if $\Omega$ is Wiener regular and
$f\in C(\partial\Omega)$, let $u$ be the harmonic function in $\Omega$
with continuous boundary values $f$ (and, for unbounded domains in
dimensions $n\ge3$, with $u$ tending to zero at infinity). Then
\[
 u(x)=\int_{\partial\Omega}f(y)\,d\omega_\Omega^x(y).
\]
In the planar situation considered in this paper, when $\Omega$ is the
complement of a nonpolar compact set, we also use harmonic measure with
pole at infinity, denoted by $\omega_\Omega^\infty$. It coincides with
the logarithmic equilibrium measure of $\partial\Omega$ when the latter
is regular. In particular, this characterization will be useful for our
construction.

The study of harmonic measure and its relation to the geometry of the
boundary is a classical topic in potential theory and geometric measure
theory. For a finite Borel measure $\nu$, recall that its Hausdorff
dimension is defined by
\[
 \dim_{\mathcal H}\nu
 =\inf\{\dim_{\mathcal H}F:\ F\text{ Borel and }\nu(F^c)=0\}.
\]
The strict inequality
\[
 \dim_{\mathcal H}\omega_\Omega^x<\dim_{\mathcal H}\partial\Omega
\]
is called \emph{dimension drop}. In the plane, Jones and
Wolff~\cite{JW88:dim} proved that $\dim_{\mathcal H}\omega_\Omega^x\le1$
for arbitrary domains. 
In particular, dimension drop always occurs when the Hausdorff dimension of the boundary is strictly larger than \(1\).
In higher dimensions, Bourgain~\cite{Bou87}
showed that for each $n\ge3$ there exists $\varepsilon_n>0$ such that
$\dim_{\mathcal H}\omega_\Omega^x\le n-\varepsilon_n$ for every domain
$\Omega\subset\mathbb R^n$. On the other hand, Wolff~\cite{Wol95}
constructed a domain in $\mathbb R^3$ whose harmonic measure has
dimension strictly greater than $2$. Determining the optimal upper
bound in higher dimensions remains an important open problem.

Dimension drop also occurs for many self-similar and self-conformal
fractal boundaries. Early results in this direction include the work
of Carleson, Makarov--Volberg, and Volberg; see also Batakis~\cite{Bat96},
who proved dimension drop for a class of planar Cantor sets that
includes self-similar examples, and the later work~\cite{BZ15:cantor}.

A natural setting for the study of dimension drop is that of
$s$-Ahlfors regular boundaries: a compact set $K$ is $s$-Ahlfors
regular if there exists $C\ge1$ such that
\[
 C^{-1}r^s\leq\mathcal H^s(K\cap B(x,r))\leq Cr^s,
 \qquad x\in K,\quad 0<r<\operatorname{diam}K.
\]
For $s>n-1$, Azzam~\cite{Azzam20:dim} proved that an $s$-Ahlfors
regular boundary in $\mathbb R^n$ necessarily exhibits dimension drop.
For $s<n-1$, but sufficiently close to $n-1$, he also obtained
dimension drop under an additional uniform non-flatness condition.
A recent result of Cai and Tolsa~\cite{CT26} quantifies this phenomenon:
in particular, an appropriate uniform lower bound $\beta_2\ge\delta_0$
for the $L^2$ beta coefficients yields dimension drop in the range
$s>n-1-c\delta_0^2$ (with the corresponding precise assumptions),
and an analogous quantitative result holds in the plane under Azzam's
bilateral condition involving $\beta_\infty$ and the hole coefficient.
See also the work of Pathak \cite{Pathak} for a less precise quantification.

For boundaries of dimension below the codimension-one threshold, it
was natural to ask whether Ahlfors regularity might also force dimension
drop. In this direction, Volberg~\cite{Vol22} formulated a related,
stronger conjecture (Conjecture~1.9 in that reference), asserting,
roughly speaking, that under a suitable accessibility hypothesis the
Green-function estimate
$G(z)\approx\operatorname{dist}(z,\partial\Omega)^s$ should not hold
for $s<1$ in the planar case. David, Jeznach and Julia~\cite{DJJ23}
disproved this conjecture by constructing planar $s$-Ahlfors regular
Cantor sets of small fractional dimension whose harmonic measure is
comparable to $\mathcal H^s|_K$; see also their discussion of Volberg's
precise formulation. Their construction yields examples on a line for
$0<s<0.2497$, and examples not constrained to a line for dimensions
slightly larger than $0.406$. In particular, these examples show that
Ahlfors regularity alone does not force dimension drop below dimension
one. The present paper extends the range of such planar examples to
every $0<s<1$.

When $K$ is contained in a line, the threshold $s=1/2$ is sharp: dimension drop holds for $s$-Ahlfors regular compact subsets of a line when $1/2\le s<1$~\cite{T24}, whereas a related $N$-adic construction yields examples without dimension drop for every $0<s<1/2$ (see the companion paper~\cite{CJT26}). This makes it particularly interesting to investigate what happens when $K$ is not required to lie on a line.

Our main result is that in the non-flat planar setting there is no
analogous obstruction below dimension one.

\begin{theorem}\label{thm:main}
For every $0<s<1$ there exists a compact $s$-Ahlfors regular set
$K\subset\mathbb C$ such that, with $\Omega=\mathbb C\setminus K$,
\[
 \omega^\infty\approx\mathcal H^s|_K.
\]
Here $\omega^\infty$ denotes harmonic measure for $\Omega$ with pole at
infinity, and comparability means that for some $C\geq1$,
\[
 C^{-1}\mathcal H^s(F)\leq\omega^\infty(F)
 \leq C\mathcal H^s(F),\qquad F\subset K\ \text{Borel}.
\]
In particular, $\dim_{\mathcal H}\omega^\infty=s$.
\end{theorem}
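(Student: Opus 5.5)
The plan is to build $K$ as the limit set of an $N$-adic construction with ratio $\lambda=N^{-1/s}$, and to choose its natural Bernoulli measure $\mu$ as the candidate equilibrium measure. At generation $k$ each cylinder $Q$ of diameter $\approx\lambda^k$ has $N$ children. Their centers sit at positions $c_Q+\lambda^{k}\,\rho\,(\zeta_j+\varepsilon_{Q,j})$, where $\zeta_j=e^{2\pi i j/N}$ and the perturbations $\varepsilon_{Q,j}$ are small. The measure $\mu$ gives mass $N^{-k}$ to each generation-$k$ cylinder. Ahlfors regularity of $K$, and $\mu\approx\mathcal H^s|_K$, follow in the standard way once the children are uniformly separated. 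This holds if $N$ is large, $\rho$ is fixed (say $\rho=1/2$) and $\lambda\ll\rho/N$, so that children do not overlap. The whole task is then to arrange that the logarithmic potential
\[
U^\mu(z)=\int\log\frac1{|z-w|}\,d\mu(w)
\]
is constant on $K$. By the characterization recalled in the introduction, $\mu$ is then the equilibrium measure, hence $\mu=\omega^\infty$. Regularity of $K$ for the Dirichlet problem follows from Ahlfors regularity, since the set is uniformly perfect.

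For the potential computation, fix $x\in K$ lying in a nested chain of cylinders $Q_0\supset Q_1\supset\cdots$. I would decompose
\[
U^\mu(x)=\sum_k\sum_{Q'\text{ sibling of }Q_{k+1}}\int_{Q'}\log\frac1{|x-w|}\,d\mu(w).
\]
Expanding each sibling contribution in multipoles around the center of $Q'$ gives a main term $N^{-k-1}\log\frac1{|x-c_{Q'}|}$ plus corrections of relative size $(\lambda/\rho)^m$ at order $m$.

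For exact $N$-th roots of unity, the zeroth-order terms over the siblings are independent of which child contains $x$, because $\prod_{j\neq i}|\zeta_i-\zeta_j|=N$ for every $i$. The first non-constant contributions therefore come from the displacement $x-c_{Q_{k+1}}\approx\lambda^{k+1}\rho\,\zeta$ and from the higher moments of the grandchildren. These produce a variation in $x$ of size $N^{-k}\lambda$ at level $k$, from the dipole term of the expansion of $\log|x-c_{Q'}|$.

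The correction scheme solves, generation by generation, for perturbations $\varepsilon_{Q,j}$ that cancel the non-constant part. This is done by a fixed-point (contraction) argument in a space of sequences of perturbations satisfying $\sup|\varepsilon|\lesssim\lambda$. Let $D_Q(\varepsilon)$ be the map sending the perturbations to the vector of potential differences between children. Its linearization at $\varepsilon=0$ is the Jacobian of $\varepsilon\mapsto\bigl(\sum_{j\neq i}\log|\zeta_i+\varepsilon_i-\zeta_j-\varepsilon_j|\bigr)_i$. Modulo rotations and dilations, this is invertible on the relevant subspace because the roots-of-unity configuration is a nondegenerate critical point of the discrete energy. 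One must also control how corrections at level $k$ feed into level $k+1$, where the error is multiplied by roughly $N\lambda^{\alpha}$.

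The main obstacle is the convergence of this series of corrections. The residual left after a first-order (first-mode) correction at scale $k$ has relative size $\lambda^2$ compared with the weight $N^{-k}$. Summing against the $N^{k}$ growth of the number of cylinders requires $N\lambda^{2}\cdot(\text{something})<1$ in the right normalization. I expect this to give precisely the range $s<2/3$ mentioned in the abstract, since $\lambda=N^{-1/s}$ and the relevant power condition is $N^{1-3/(2s)\cdots}$. To reach all $s<1$, I would instead correct each cylinder by a polynomial deformation of degree $m$, i.e. $\varepsilon_{Q,j}=\sum_{\ell\le m}a_{Q,\ell}P_\ell(\zeta_j)$. The deformation is chosen to kill the first $m$ modes of the potential variation. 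The residual is then $O(\lambda^{m+1})$, and $N\lambda^{m+1}\ll1$ holds, i.e. $m+1>s$ in the appropriate scaling, which is achievable for any $s<1$ once $m$ is large and $N$ is large.

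Finally, I would verify that the limiting potential is exactly constant, by passing to the limit in the generation-$n$ approximations, and that the separation survives the perturbations. This yields $\omega^\infty=\mu\approx\mathcal H^s|_K$.
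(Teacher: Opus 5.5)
There is a genuine gap in the correction mechanism: as you describe it, it cannot make $U^\mu$ constant on $K$. Your map $D_Q$ records only potential differences \emph{between children of one parent}, and you explicitly factor out dilations. Write, as in the paper, $g_n(x_{Qj})=g_{n-1}(Q)+N^{-n}F_Q(j)$. Whatever the shape parameters, the sibling mean of $F_Q$ equals the universal constant $\Gamma_n$ up to an error exponentially small in $N$, because the low moments of root-of-unity configurations vanish. Hence every descendant of $Q$ inherits the error $g_{n-1}(Q)-c_{n-1}$ unchanged. It also inherits the residual oscillation $N^{-n}(F_Q-\overline F_Q)$, which any finite-order correction leaves behind. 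So $g_n(x_n)-c_n$ converges to a non-constant function on $K$, and Frostman's characterization does not apply.

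The missing idea is a separate mean correction. The paper dilates each sibling configuration by $b_Q=e^{t_Q}$. This shifts the mean of the sibling term by exactly $(N-1)t_Q$, while the means of the exterior terms stay exponentially small. It takes $t_Q=N^n(c_n-H_Q)/(N-1)$, which puts every new family on a common level $c_n$. Only then are the shape parameters chosen with $b_Q$ fixed, and they do not disturb the mean. This is what makes $|g_n-c_n|\le AN^{-n}$ reproduce itself; it is not a contraction of errors from level $k$ to level $k+1$.

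Your quantitative bookkeeping also uses the wrong small parameter. The governing ratio is $\tau=N\lambda$, the size of a child cylinder over the sibling spacing $r_Q/N$. By your own count, the variation to be cancelled among the children of $Q_{k+1}$ is $N^{-k}\lambda$. A deformation $d\zeta^2$ of that family, whose masses are $N^{-k-2}$, responds by $N^{-k-2}(N-2)\Re(d\zeta_j)$. So $|d|\sim\tau$, not $\lambda$. The residual after cancelling $M$ modes is $O(N\tau^{M+1})=O(N^{M+2-(M+1)/s})$ relative to the cylinder mass. This gives $s<(M+1)/(M+2)$, and $s<2/3$ for $M=1$. Your criterion $N\lambda^{m+1}\ll1$ already holds with $m=0$ for every $s<1$, so it cannot be the right one. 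In the paper, $N\tau^{M+1}\ll1$ is needed precisely so that the mean-correcting dilation, of size $|t_Q|\lesssim A=K_MN\tau^{M+1}$, stays admissible.

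Finally, arbitrary perturbations $\varepsilon_{Q,j}$ of the required size $\tau$ distort nearest-neighbour distances $\sim1/N$ by the relative amount $N\tau=N^{2-1/s}$, which is large when $s>1/2$. So a sup-norm contraction linearized at the regular $N$-gon breaks down, and the nondegenerate-critical-point property you invoke is not what is needed. The paper instead uses holomorphic polynomial deformations, for which $V(z)-V(w)=(z-w)(1+U)$ with $|U|\lesssim\tau$. For these, $d_k\zeta^{k+1}$ contributes $(N-k-1)\Re(d_k\zeta_j^k)$ to mode $k$, plus terms of weighted degree $k$ in $d_1,\dots,d_{k-1}$. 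The coefficients can therefore be solved triangularly.
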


\subsection*{Idea of the construction}
Following the potential-theoretic approach of David--Jeznach--Julia,
we construct a nested $N$-adic family: each parent point is replaced by
$N$ children placed close to a circle. For a prescribed $s<1$ we take
$\lambda=N^{-1/s}$, so that $N\lambda^s=1$. The reference positions are
the roots of unity $\zeta_j=e^{2\pi i j/N}$. Unlike the Chebyshev nodes
used for a construction contained in a line in \cite{CJT26}, these reference points have
an exactly constant discrete logarithmic interaction:
\[
 \sum_{k\neq j}\log|\zeta_j-\zeta_k|=\log N
 \qquad(0\leq j<N).
\]
However, this symmetry alone does not persist at successive scales:
the field generated by other families of points varies among the children
of a given parent.

To compensate for this variation, we deform each circular configuration
by a polynomial
\[
 V_Q(\zeta)=\zeta+\sum_{r=1}^M d_{Q,r}\zeta^{r+1},
 \qquad |d_{Q,r}|\lesssim_M\tau^r,\quad \tau=N\lambda,
\]
and introduce a scalar dilation to correct the mean potential. The
polynomial coefficients successively cancel the first $M$ nonconstant
Fourier modes of the local potential. For fixed $M$, the remaining error
is of size $O_M(N\tau^{M+1})$. Since
\[
 N\tau^{M+1}=N^{M+2-(M+1)/s}\longrightarrow0
 \quad\text{if}\quad s<\frac{M+1}{M+2},
\]
choosing $M$ sufficiently large yields any prescribed $s<1$.

The limiting measure is the pushforward of the uniform Bernoulli
measure on the symbolic Cantor space. Separation of cylinders implies
its $s$-Ahlfors regularity. On the other hand, the corrections make its
logarithmic potential constant on the limiting set. Hence it coincides
with the equilibrium measure of $K$, that is, harmonic measure with pole
at infinity.

\subsection*{Organization of the paper}
We begin with the circular reference configuration and explain in detail
the correction of the first Fourier mode; this already gives the range
$s<2/3$. We then develop corrections of arbitrary polynomial order,
prove the corresponding inductive estimates, and pass to the limiting
equilibrium measure. The proof of Theorem~\ref{thm:main} is completed at
the end of the paper.

\subsection*{AI Disclosure} 
 The authors used the large language model GPT-5.6 to explore mathematical ideas and develop proofs in the preparation of this paper. In particular, the initial idea of considering circular Cantor constructions with a large number $N$ of children per parent, and then letting $N\to\infty$ to obtain examples without dimension drop for the full range $0<s<1$, originated with the authors. The final result was developed through extensive iterative interactions between the authors and the LLM. The authors also introduced significant simplifications and improvements to some of the arguments initially proposed by the LLM. The authors have carefully checked all the proofs and take full responsibility for the mathematical content of the paper.

\section{The circular construction and its first-order correction}
\subsection{Parameters and construction}
\label{subsec:first-order-parameters}
In this section and through Section~\ref{sec:first-mode-induction} we first work in the range $0<s<2/3$. The arbitrary-order extension will be treated separately.
Fix $0<s<2/3$. Let $N\ge4$ be large and put
\begin{equation}\label{eq:lambda} \lambda=N^{-1/s},\qquad N\lambda^s=1.\end{equation}
Set
\begin{equation}\label{eq:tau} \tau=N\lambda=N^{1-1/s}.\end{equation}
For $0<s<1$, one has $0<\tau<1$.
Fix once and for all $\rho=1/10$ and a sufficiently small absolute
constant $0<\varepsilon_0\ll1$. The quantitative hypothesis used below is
\begin{equation}\label{eq:smallness} N\tau^2=N^3\lambda^2\le\varepsilon_0.\end{equation}
This can be achieved by taking $N$ large precisely when $s<2/3$.

Let $\zeta_j=e^{2\pi i j/N}$, $0\le j<N$. We use
\begin{equation}\label{eq:roots} \sum_{j=0}^{N-1}\zeta_j^k=0\ (1\le k<N),\qquad \prod_{k\ne j}|\zeta_j-\zeta_k|=N.\end{equation}
The geometric estimates require only
that these constants be small enough. Throughout the first-order
construction the admissible parameters satisfy
\begin{equation}\label{eq:admissible}
 |\log b_Q|\le\varepsilon_0,\qquad |d_Q|\le C_0\tau,
\end{equation}
where $C_0$ is a fixed constant. We take $N$ large enough that $C_0\tau$
is small. The parameters will be chosen inductively in
Section~\ref{sec:first-mode-induction}; for now we describe the set
and measure associated with any family of admissible parameters.

Let $\mathcal A=\{0,\ldots,N-1\}$. 
For a finite word $Q$, we denote by $\gen(Q)$
its generation, that is, the number of letters in $Q$.  
For every vertex $Q$ introduce
\begin{equation}\label{eq:VQ} V_Q(\zeta)=\zeta+d_Q\zeta^2,\end{equation}
and a positive scalar $b_Q$. The parameter $d_Q$ corrects the first Fourier mode and $b_Q$ corrects the mean potential. By \eqref{eq:roots}, $\sum_jV_Q(\zeta_j)=0$.

To the word $Q$ we associate $x_Q\in\C$ and a scale
\begin{equation}\label{eq:rQ} r_Q=\lambda^{\gen(Q)}.
\end{equation}
Starting from $x_\varnothing=0$, define
\begin{equation}\label{eq:children}
x_{Qj}=x_Q+\rho b_Qr_Q V_Q(\zeta_j),\qquad r_{Qj}=\lambda r_Q.
\end{equation}
The factor $b_Q$ changes only the position of the children, not their intrinsic scales. In particular, no product of the $b_R$ enters $r_Q$.

Set $K_n=\{x_Q:\gen(Q)=n\}$. We will often identify
$Q$ with the corresponding point $x_Q\in\C$, by a slight
abuse of notation. Thus, for example, we may write
$Q\in K_n$ instead of $x_Q\in K_n$. If $m\ge\gen(Q)$, let $\Desc_m(Q)$ be the descendants of $Q$ in $K_m$. The barycenter identity gives
\begin{equation}\label{eq:barycenter}
x_Q=\frac1N\sum_jx_{Qj}=\frac1{N^{m-\gen(Q)}}\sum_{x_R\in\Desc_m(Q)}x_R.
\end{equation}

\subsubsection*{The limiting compact set and its natural measure}
Let $\Sigma=\mathcal A^{\N}$ be the space of infinite words. If
$\omega\in\Sigma$, write $\omega|n$ for its prefix of length $n$.
The displacement bound in \eqref{eq:children} gives
\[
 |x_{Qj}-x_Q|\le C r_Q.
\]
Consequently, for $m>n$,
\[
 |x_{\omega|m}-x_{\omega|n}|
 \le C\sum_{k=n}^{m-1}\lambda^k
 \le \frac{C}{1-\lambda}\lambda^n.
\]
Thus the centers along every infinite branch converge, uniformly in
$\omega$. Define
\begin{equation}\label{eq:coding-K}
 \pi(\omega)=\lim_{n\to\infty}x_{\omega|n},
 \qquad K=\pi(\Sigma).
\end{equation}
Equip $\Sigma$ with the ultrametric
\[
 d_\Sigma(\omega,\omega')=\lambda^{L(\omega,\omega')}
 \quad(\omega\ne\omega'),\qquad d_\Sigma(\omega,\omega)=0,
\]
where $L(\omega,\omega')$ is the length of their maximal common
prefix. The preceding bound implies that $\pi$ is continuous.
Since $\Sigma$ is compact, $K$ is compact as well.

For a finite word $Q$, let
\[
 [Q]=\{\omega\in\Sigma:\omega|_{\gen(Q)}=Q\},
 \qquad K_Q=\pi([Q]).
\]
We call $K_Q$ the cylinder associated with $Q$. In particular,
\[
 K_\varnothing=K,\qquad K_Q=\bigcup_{j=0}^{N-1}K_{Qj},
 \qquad K_{Qj}\subset K_Q.
\]
The finite sets $K_n$ consist of the generation-$n$ centers: they
are approximations to $K$, not a decreasing family of compact sets.
Indeed, the branch estimate yields
\[
 d_H(K_n,K)\le C\lambda^n,
\]
where $d_H$ denotes Hausdorff distance. The nested objects in the
construction are the cylinders $K_Q$.

Let $\nu$ be the uniform Bernoulli probability measure on $\Sigma$,
so that $\nu([Q])=N^{-\gen(Q)}$, and define
\begin{equation}\label{eq:Bernoulli-mu}
 \mu=\pi_\#\nu.
\end{equation}
The separation estimates in Section~\ref{sec:geometry} will show
that $\pi$ is injective. In particular, the cylinders of any fixed
generation are disjoint and
\[
 \mu(K_Q)=N^{-\gen(Q)}=r_Q^s.
\]
They will also show that $\mu$ is $s$-Ahlfors regular.
At this point no equilibrium property is asserted: the purpose of
the later inductive choice of $b_Q,d_Q$ is to make the logarithmic
potential of this natural measure constant on $K$.

\subsection{The unperturbed circular configuration}
\label{subsec:unperturbed-circle}

To motivate the deformation in \eqref{eq:VQ}, we examine the basic
unperturbed circular configuration and record the identities behind
the first-mode correction.

Let
\[
    \zeta_j=e^{2\pi i j/N},
    \qquad j=0,\ldots,N-1.
\]
Consider first $N$ point masses of equal mass $1/N$ located at the
points $\zeta_j$. The discrete logarithmic potential generated by the
other $N-1$ points at $\zeta_j$ is
\[
    \Phi_N(\zeta_j)
    =
    \frac1N
    \sum_{\substack{k=0\\k\neq j}}^{N-1}
    \log\frac1{|\zeta_j-\zeta_k|}.
\]
By \eqref{eq:roots},
\[
    \prod_{\substack{k=0\\k\neq j}}^{N-1}
    |\zeta_j-\zeta_k|=N,
\]
and hence
\begin{equation}\label{eq:unperturbed-potential-constant}
    \Phi_N(\zeta_j)
    =
    -\frac{\log N}{N},
    \qquad j=0,\ldots,N-1.
\end{equation}
Thus the discrete potential is exactly constant on the reference
configuration.

This is the circular analogue of the role played by the
Gauss--Chebyshev nodes in the work \cite{CJT26}. There, the
nodes discretize the equilibrium measure of an interval. Here, the
equilibrium measure of a circle is normalized arclength measure, and
the equally spaced points provide its natural discrete counterpart.
In fact, in the circular case the constancy in
\eqref{eq:unperturbed-potential-constant} is exact.

We next examine what happens when the configuration is observed one
scale below. Fix the point $\zeta_0=1$ and consider a point of the
form
\[
    1+w,
    \qquad |w|<c/N.
\]
The unnormalized logarithmic potential produced by the other $N-1$
nodes is
\[
    H_N(w)
    =
    \sum_{k=1}^{N-1}
    \log\frac1{|1+w-\zeta_k|}.
\]
Notice that $H_N$ is not normalized by a factor $1/N$. This
normalization will be convenient later, since the quantities
$\Delta_1,\Delta_2,\Delta_3$ appearing in the inductive argument will
also be defined as unnormalized sums.

Using
\[
    \prod_{k=1}^{N-1}(1+w-\zeta_k)
    =
    \frac{(1+w)^N-1}{w},
\]
and
\[
    \prod_{k=1}^{N-1}(1-\zeta_k)=N,
\]
we obtain the exact identity
\begin{equation}\label{eq:HN-exact}
    H_N(w)-H_N(0)
    =
    -\Re\log
    \left(
        \frac{(1+w)^N-1}{Nw}
    \right).
\end{equation}

For $N|w|$ sufficiently small,
\[
    \frac{(1+w)^N-1}{Nw}
    =
    1+\frac{N-1}{2}w
    +
    \frac{(N-1)(N-2)}6w^2
    +O(N^3|w|^3).
\]
Taking the logarithm gives
\begin{equation}\label{eq:HN-expansion}
    H_N(w)-H_N(0)
    =
    -\frac{N-1}{2}\Re w-
    \frac{(N-1)(N-5)}{24}\Re(w^2)
    +O(N^3|w|^3).
\end{equation}
In particular, if
\[
    w=\lambda e^{i\theta},
\]
then, recalling that $\tau=N\lambda$,
\begin{equation}\label{eq:HN-angular-expansion}
    H_N(\lambda e^{i\theta})-H_N(0)
    =\,
    -\frac{N-1}{2}\lambda\cos\theta-
    \frac{(N-1)(N-5)}{24}
    \lambda^2\cos(2\theta)
    +O(\tau^3).
\end{equation}
Thus the leading nonconstant term is a first Fourier mode. In the
unnormalized potential $H_N$ its size is comparable to
$N\lambda=\tau$.

Formula \eqref{eq:HN-angular-expansion} explains why the perfectly
symmetric configuration cannot simply be iterated without a
correction. Although the potential is exactly constant on each
reference circle, the points belonging to a smaller circular
configuration centered at one of the nodes see the other nodes of the
previous generation asymmetrically. The dominant asymmetry is the
first Fourier mode in \eqref{eq:HN-angular-expansion}: points on the
inner and outer sides of the smaller circle do not see the preceding
generation in exactly the same way.

This motivates the introduction of a small deformation of the
reference configuration which cancels the first Fourier mode. The
simplest convenient form of such a deformation is the holomorphic
perturbation
\[
    V_d(\zeta)=\zeta+d\zeta^2,
\]
for some $d\in\mathbb C$.
We next record the linear response of the local discrete potential to
this perturbation.

Define
\[
    \Psi_{N,d}(j)
    =
    \sum_{\substack{k=0\\k\neq j}}^{N-1}
    \log|V_d(\zeta_j)-V_d(\zeta_k)|.
\]
Since
\[
    V_d(\zeta_j)-V_d(\zeta_k)
    =
    (\zeta_j-\zeta_k)
    \bigl(1+d(\zeta_j+\zeta_k)\bigr),
\]
we have, for $|d|$ sufficiently small,
\[
\begin{split}
    \Psi_{N,d}(j)-\Psi_{N,0}(j)
    &=
    \Re\sum_{k\neq j}
    \log\bigl(1+d(\zeta_j+\zeta_k)\bigr)=
    \Re\left[
        d\sum_{k\neq j}(\zeta_j+\zeta_k)
    \right]
    +O(N|d|^2).
\end{split}
\]
Using
\[
    \sum_{k\neq j}\zeta_k=-\zeta_j,
\]
we obtain
\[
    \sum_{k\neq j}(\zeta_j+\zeta_k)
    =
    (N-2)\zeta_j.
\]
Consequently,
\begin{equation}\label{eq:linear-response-unperturbed}
    \Psi_{N,d}(j)-\Psi_{N,0}(j)
    =
    (N-2)\Re(d\zeta_j)
    +O(N|d|^2).
\end{equation}

Thus the perturbation $d\zeta^2$ produces, to first order, precisely
a first Fourier mode. Moreover, the linear response has size
comparable to $N|d|$. This is the mechanism that will allow us to
cancel the leading first Fourier mode created by the preceding
generations.

We emphasize that the preceding computation is only intended to
motivate the choice of perturbation. In the inductive construction,
the first Fourier mode receives contributions not only from the
immediately preceding generation but also from all earlier
generations. Moreover, the normalization of those contributions is
most naturally expressed in terms of the quantities
$\Delta_1,\Delta_2,\Delta_3$ introduced below. We will therefore not
choose $d$ at this stage. Instead, after all these contributions have
been taken into account, we will prove that one can choose the
parameter $d_Q\in \mathbb C$ at each vertex $Q$ so that
\[
    |d_Q|\lesssim\tau
\]
and the complete first Fourier mode vanishes.

After this cancellation, the remaining nonconstant part of the
potential will be of size $O(N\tau^2)$. This is the origin of the
smallness condition
\[
    N\tau^2\ll1
\]
in the first-order construction considered in this section.

\section{Geometry and Ahlfors regularity}
\label{sec:geometry}

Throughout this section we assume the admissibility conditions
\eqref{eq:admissible}, with $\tau$ sufficiently small. The constants
in the localization and separation estimates are independent of $N$;
the Ahlfors regularity constants may depend on the fixed parameters
$N$ and $\lambda$.

\begin{lemma}\label{lem:V-bilip}
If $\tau$ is small enough, then
\[\frac12|\zeta-\eta|\le|V_Q(\zeta)-V_Q(\eta)|\le2|\zeta-\eta|\quad(\zeta,\eta\in\mathbb S^1).\]
\end{lemma}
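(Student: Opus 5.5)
The plan is to factor the difference of $V_Q$ at two points exactly, as in the linear-response computation of Section~\ref{subsec:unperturbed-circle}. For $\zeta,\eta\in\mathbb S^1$ we have
\[
 V_Q(\zeta)-V_Q(\eta)=(\zeta-\eta)\bigl(1+d_Q(\zeta+\eta)\bigr).
\]
Hence it suffices to control the scalar factor $1+d_Q(\zeta+\eta)$ from above and below, uniformly in $\zeta,\eta$.

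Since $|\zeta|=|\eta|=1$, we have $|\zeta+\eta|\le 2$. By the admissibility condition \eqref{eq:admissible}, $|d_Q|\le C_0\tau$, so
\[
 \bigl|d_Q(\zeta+\eta)\bigr|\le 2C_0\tau .
\]
Choosing $\tau$ small enough that $2C_0\tau\le 1/2$, the triangle inequality gives
\[
 \tfrac12\le |1+d_Q(\zeta+\eta)|\le \tfrac32\le 2 .
\]
Multiplying by $|\zeta-\eta|$ yields both inequalities of the lemma. The case $\zeta=\eta$ is trivial.

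There is no real obstacle here. The only point to note is that "small enough" means $\tau\le 1/(4C_0)$, with $C_0$ the fixed constant from \eqref{eq:admissible}; this is compatible with the standing choice of $N$ large. For the higher-order deformations $V_Q(\zeta)=\zeta+\sum_r d_{Q,r}\zeta^{r+1}$ used later, the same argument applies. The factorization becomes
\[
 V_Q(\zeta)-V_Q(\eta)=(\zeta-\eta)\Bigl(1+\sum_r d_{Q,r}\sum_{a+b=r}\zeta^a\eta^b\Bigr),
\]
and the perturbation is bounded by $\sum_r (r+1)|d_{Q,r}|\lesssim_M \tau$ on the circle, which is again small.
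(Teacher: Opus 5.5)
Your proof is correct and is exactly the paper's argument: the paper's proof consists only of the factorization $V_Q(\zeta)-V_Q(\eta)=(\zeta-\eta)\bigl(1+d_Q(\zeta+\eta)\bigr)$, and you have supplied the bound $|d_Q(\zeta+\eta)|\le 2C_0\tau\le 1/2$ that it leaves implicit. Your remark on the higher-order deformations likewise matches the divided-difference estimate in Step~1 of the proof of Lemma~\ref{lem:Ap}.
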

\begin{proof}
$V_Q(\zeta)-V_Q(\eta)=(\zeta-\eta)(1+d_Q(\zeta+\eta))$.
\end{proof}

Consequently $|x_{Qj}-x_{Qk}|\approx r_Q|\zeta_j-\zeta_k|$ and, for $j\ne k$,
\begin{equation}\label{eq:sibling-min}|x_{Qj}-x_{Qk}|\ge c r_Q/N.\end{equation}

\begin{lemma}\label{lem:cylinder}
For every $Q$ and $m\ge\gen(Q)$, $\Desc_m(Q)\subset B(x_Q,Cr_Q)$. Hence the limiting cylinder $K_Q$ satisfies $\diam K_Q\le Cr_Q$.
\end{lemma}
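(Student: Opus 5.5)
The argument is a geometric-series bound on the displacements along a branch. Fix $Q$ with $\gen(Q)=n$ and a descendant $R\in\Desc_m(Q)$. We have $R=Qj_1j_2\cdots j_{m-n}$, so the chain of ancestors $Q=R_0,R_1,\dots,R_{m-n}=R$ satisfies $\gen(R_i)=n+i$.

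By \eqref{eq:children},
\[
|x_{R_{i+1}}-x_{R_i}|=\rho\, b_{R_i} r_{R_i}|V_{R_i}(\zeta_{j_{i+1}})|.
\]
We bound each factor using the admissibility conditions \eqref{eq:admissible}:
\begin{itemize}
\item $|\zeta|=1$ and $|d_{R_i}|\le C_0\tau$, so $|V_{R_i}(\zeta)|\le 1+C_0\tau\le 2$ once $\tau$ is small;
\item $|\log b_{R_i}|\le\varepsilon_0$ gives $b_{R_i}\le e^{\varepsilon_0}\le 2$.
\end{itemize}
Hence each step has length at most $4\rho\,\lambda^{n+i}$. Summing the telescoping chain,
\[
|x_R-x_Q|\le 4\rho\sum_{i\ge0}\lambda^{n+i}=\frac{4\rho}{1-\lambda}\,r_Q\le C r_Q.
\]
The constant is absolute: since $\lambda=N^{-1/s}\le 1/4$, we can take $C=1$, because $4\rho/(1-\lambda)\le (2/5)(4/3)<1$. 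This proves $\Desc_m(Q)\subset B(x_Q,Cr_Q)$.

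For the cylinder, take $\omega\in[Q]$. Then $x_{\omega|m}\in\Desc_m(Q)$ for every $m\ge n$, and $\pi(\omega)=\lim_m x_{\omega|m}$. So $\pi(\omega)\in\overline{B}(x_Q,Cr_Q)$, i.e. $K_Q\subset \overline B(x_Q,Cr_Q)$, and therefore $\diam K_Q\le 2Cr_Q$. After renaming the constant, this is the stated bound.

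There is no real obstacle here. The only point needing care is that the bounds on $b_R$ and $|V_R|$ must be uniform over all vertices, which is exactly what \eqref{eq:admissible} provides. This uniformity makes the constant independent of $N$, as claimed at the start of Section~\ref{sec:geometry}.
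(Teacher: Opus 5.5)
Your proof is correct and follows the paper's argument: you bound the successive displacements along a branch by $Cr_Q, C\lambda r_Q, C\lambda^2 r_Q,\ldots$ using \eqref{eq:admissible}, sum the geometric series, and get the cylinder bound by passing to the limit $\pi(\omega)=\lim_m x_{\omega|m}$. Your explicit check that the constant is absolute (indeed $C=1$ works for the ball, since $4\rho/(1-\lambda)<1$) is a harmless refinement of the paper's one-line proof.
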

\begin{proof}
The successive displacements are bounded by $Cr_Q,C\lambda r_Q,C\lambda^2r_Q,\ldots$ and form a geometric series.
\end{proof}

\begin{lemma}\label{lem:separation}
If $\tau$ is small enough, then
\[\dist(K_{Qj},K_{Qk})\ge c r_Q/N\qquad(j\ne k).\]
\end{lemma}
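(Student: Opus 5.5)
The argument compares the distance between sibling centers with the size of the children's cylinders. Fix $Q$ and $j\ne k$. By \eqref{eq:sibling-min}, which follows from Lemma~\ref{lem:V-bilip} together with $|\zeta_j-\zeta_k|\ge|1-e^{2\pi i/N}|\ge 4/N$ and $b_Q\ge e^{-\varepsilon_0}$, we have
\[
|x_{Qj}-x_{Qk}|\ge \tfrac12\rho\, e^{-\varepsilon_0} r_Q|\zeta_j-\zeta_k|\ge c_1 r_Q/N ,
\]
with $c_1=\rho e^{-\varepsilon_0}$, say, an absolute constant. By Lemma~\ref{lem:cylinder} applied to the children, $K_{Qj}\subset \overline{B}(x_{Qj},C r_{Qj})=\overline B(x_{Qj},C\lambda r_Q)$, and likewise for $K_{Qk}$. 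Since $x_{Qj}\in K_{Qj}$ is not needed, we only use containment in these balls.

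The triangle inequality then gives
\[
\dist(K_{Qj},K_{Qk})\ge |x_{Qj}-x_{Qk}|-2C\lambda r_Q\ge \frac{r_Q}{N}\bigl(c_1-2C N\lambda\bigr)=\frac{r_Q}{N}(c_1-2C\tau).
\]
Hence, once $\tau\le c_1/(4C)$, we obtain $\dist(K_{Qj},K_{Qk})\ge (c_1/2)r_Q/N$, which is the claim with $c=c_1/2$.

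The one point that needs care is that the constant $C$ in Lemma~\ref{lem:cylinder} must be independent of $N$. Otherwise the smallness of $\tau=N\lambda$ would not beat it. I would check this directly. The displacement at generation $\gen(Q)+i$ is at most $\rho b_R r_R\sup_{\mathbb S^1}|V_R|\le \rho e^{\varepsilon_0}(1+C_0\tau)\lambda^i r_Q$. Summing the geometric series gives $C=\rho e^{\varepsilon_0}(1+C_0\tau)/(1-\lambda)\le 1$, an absolute constant.

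Similarly, the lower bound $|\zeta_j-\zeta_k|\ge 2\sin(\pi/N)\ge 4/N$ is uniform in $N$. So the comparison really is $\tau$ against an absolute constant, exactly as the hypothesis ``$\tau$ small enough'' permits. I expect no further obstacle: the key is simply that the children's cylinders have radius of order $\lambda r_Q$, which is much smaller than the sibling spacing of order $r_Q/N$ precisely because $N\lambda=\tau\ll1$.
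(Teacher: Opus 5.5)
Your proof is correct and follows the paper's own argument. The paper likewise combines the sibling-center separation \eqref{eq:sibling-min} with the cylinder bound of Lemma~\ref{lem:cylinder}, which makes the children's cylinders of size $C\lambda r_Q=C\tau r_Q/N$, and absorbs this error for $\tau$ small. Using containment in the balls $\overline{B}(x_{Qj},C\lambda r_Q)$ rather than diameters, and checking that $C$ is independent of $N$, only spells out what the paper's one-line proof leaves implicit.
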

\begin{proof}
Combine \eqref{eq:sibling-min} with $\diam K_{Qj}+\diam K_{Qk}\le C\lambda r_Q=C\tau r_Q/N$.
\end{proof}

Recall that $\Sigma$, $d_\Sigma$, $\pi$, $K$, $\nu$, and $\mu$ were defined in Section~\ref{subsec:first-order-parameters}.

\begin{lemma}\label{lem:symbolic}
There are $c,C>0$ such that
\[\frac cN d_\Sigma(\omega,\omega')\le|\pi(\omega)-\pi(\omega')|\le C d_\Sigma(\omega,\omega').\]
\end{lemma}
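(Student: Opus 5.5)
The plan is to reduce both inequalities to the generation of the first disagreement between the two words. Fix $\omega\ne\omega'$ in $\Sigma$ and let $n=L(\omega,\omega')$ be the length of their maximal common prefix. Put $Q=\omega|n$, so that $d_\Sigma(\omega,\omega')=\lambda^n=r_Q$. Then $\omega\in[Qj]$ and $\omega'\in[Qk]$ for some letters $j\ne k$, and so $\pi(\omega)\in K_{Qj}$ and $\pi(\omega')\in K_{Qk}$. Both estimates will be read off from the cylinder lemmas at the single scale $r_Q$.

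For the upper bound, both $\pi(\omega)$ and $\pi(\omega')$ lie in $K_Q$. Lemma~\ref{lem:cylinder} gives $\diam K_Q\le Cr_Q$, so
\[
|\pi(\omega)-\pi(\omega')|\le \diam K_Q\le C r_Q=C\,d_\Sigma(\omega,\omega').
\]
To see that $K_Q$ really contains these limit points, note that $\pi(\omega)=\lim_m x_{\omega|m}$ with $x_{\omega|m}\in\Desc_m(Q)\subset B(x_Q,Cr_Q)$. Passing to the limit keeps the point in the closed ball $\overline B(x_Q,Cr_Q)$.

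For the lower bound, apply Lemma~\ref{lem:separation} to the sibling cylinders $K_{Qj}$ and $K_{Qk}$:
\[
|\pi(\omega)-\pi(\omega')|\ge\dist(K_{Qj},K_{Qk})\ge \frac cN r_Q=\frac cN d_\Sigma(\omega,\omega').
\]
The case $\omega=\omega'$ is trivial.

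There is no serious obstacle, since the statement is essentially a repackaging of Lemmas~\ref{lem:cylinder} and~\ref{lem:separation}. The one point to check is the constants. The separation bound needs $C\lambda r_Q=C\tau r_Q/N$ to be smaller than half of the sibling gap $c r_Q/N$ from \eqref{eq:sibling-min}. This holds uniformly in $Q$ once $\tau$ is small, because the admissibility bounds \eqref{eq:admissible} make the constants independent of $Q$ and of $N$. In particular, $c$ and $C$ in the statement do not depend on $N$. As a byproduct, the lower bound shows that $\pi$ is injective.
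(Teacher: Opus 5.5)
Your proposal is correct and follows essentially the same route as the paper: you take the maximal common prefix $Q$ with $d_\Sigma(\omega,\omega')=r_Q$, get the upper bound from Lemma~\ref{lem:cylinder}, and get the lower bound from Lemma~\ref{lem:separation} applied to the two distinct children of $Q$. Your extra remarks (that the limit points lie in $\overline B(x_Q,Cr_Q)$, and that the constants are uniform once $\tau$ is small) only fill in details the paper leaves implicit.
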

\begin{proof}
If $Q$ is the maximal common prefix, the upper estimate follows from Lemma~\ref{lem:cylinder}, and the lower estimate from Lemma~\ref{lem:separation} applied to the two distinct children of $Q$.
\end{proof}

\begin{lemma}\label{lem:AD}
The measure $\mu$ is $s$-Ahlfors regular and $\mu\approx\Hs|_K$.
\end{lemma}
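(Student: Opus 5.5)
The plan is to transfer everything to the symbolic space through the bi-Lipschitz-type bound of Lemma~\ref{lem:symbolic}. First I will check that $\pi$ is injective and that cylinders of a fixed generation are pairwise disjoint. If $\omega\neq\omega'$ have maximal common prefix $Q$, Lemma~\ref{lem:symbolic} gives $|\pi(\omega)-\pi(\omega')|\ge (c/N)r_Q>0$. Hence $\mu(K_Q)=\nu([Q])=N^{-\gen(Q)}=r_Q^s$, using $N\lambda^s=1$.

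\emph{Upper bound.} Fix $x=\pi(\omega)\in K$ and $0<r<\diam K$, and let $n$ be the smallest integer with $C\lambda^n\le r$, where $C$ is the constant of Lemma~\ref{lem:cylinder}. Then $\lambda^{n}\approx r$, with constants depending on $N$ and $\lambda$. I claim that $B(x,r)\cap K$ is covered by a bounded number of generation-$n$ cylinders. Take $y=\pi(\omega')\in B(x,r)$, and let $Q$ be the common prefix of $\omega,\omega'$. Lemma~\ref{lem:symbolic} gives $(c/N)\lambda^{\gen(Q)}\le r$, so $\gen(Q)\ge n-k_0$ for some fixed $k_0=k_0(N,\lambda)$. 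Thus $y$ lies in a generation-$n$ descendant of $\omega|(n-k_0)$. There are at most $N^{k_0}$ such cylinders, so
\[\mu(B(x,r))\le N^{k_0}N^{-n}\lesssim r^s.\]

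\emph{Lower bound.} Choose $n$ so that $C\lambda^n\le r<C\lambda^{n-1}$. Then $K_{\omega|n}\subset B(x,r)$ by Lemma~\ref{lem:cylinder}, since $x\in K_{\omega|n}$ and $\diam K_{\omega|n}\le C\lambda^n\le r$. Hence $\mu(B(x,r))\ge N^{-n}=\lambda^{ns}\gtrsim r^s$. This proves that $\mu$ is $s$-Ahlfors regular with support $K$.

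\emph{Comparison with $\Hs|_K$.} Let $\mu$ be an $s$-Ahlfors regular measure with $\operatorname{supp}\mu=K$. The standard density theorems (mass distribution principle and Vitali covering), applied to the upper and lower bounds just proved, give $\mu\approx\Hs|_K$. Concretely, the upper bound $\mu(B(x,r))\le Cr^s$ for all $x\in K$ yields $\mu(F)\lesssim\Hs(F)$ for Borel $F\subset K$. For this, cover $F$ by sets $E_i$ of small diameter; each meeting $K$ lies in a ball centered on $K$ with radius $\diam E_i$. For the reverse inequality, the lower bound together with a $5r$-covering argument gives $\Hs(F)\lesssim\mu(F)$. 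One first does this for compact $F$ using neighborhoods $U\supset F$ with $\mu(U)\le\mu(F)+\epsilon$, then extends to all Borel sets by regularity of $\mu$. Consequently $\Hs(K\cap B(x,r))\approx r^s$, so $K$ is $s$-Ahlfors regular.

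The main care point is the bookkeeping in the upper bound. The separation in Lemma~\ref{lem:symbolic} loses a factor $1/N$, and $\lambda$ is tiny, so the generation gap $k_0$ must be fixed depending only on $N$ and $\lambda$. That is acceptable, since the Ahlfors constants are allowed to depend on these parameters. Nothing else is delicate.
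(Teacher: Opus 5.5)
Your proof is correct and is essentially the paper's argument. The paper writes the transfer as the sandwich $B_\Sigma(\omega,r/C_0)\subset\pi^{-1}(B(x,r))\subset B_\Sigma(\omega,r/c_0)$, where symbolic balls are cylinders of mass $N^{-n}=\lambda^{ns}$; your cylinder-diameter lower bound and fixed generation gap $k_0=k_0(N,\lambda)$ encode exactly this, and you finish with the same mass-distribution and $5r$-covering step. One minor wording point: extending $\Hs(F)\lesssim\mu(F)$ from compact to Borel $F$ uses inner regularity of the finite measure $\Hs|_K$ (finite by the compact case applied to $K$), not regularity of $\mu$; alternatively, run the Vitali argument directly for Borel $F$ using outer regularity of $\mu$.
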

\begin{proof}
If $\lambda^{n+1}<r\le\lambda^n$, then
\[[\omega|_{n+1}]\subset B_\Sigma(\omega,r)\subset[\omega|_n],\]
so
\[N^{-(n+1)}\le\nu(B_\Sigma(\omega,r))\le N^{-n}.\]
Since $N^{-1}=\lambda^s$, $\nu(B_\Sigma(\omega,r))\approx r^s$. Lemma~\ref{lem:symbolic} transfers this estimate to $K$: if $x=\pi(\omega)$ and $c_0d_\Sigma\le|\pi(\omega)-\pi(\omega')|\le C_0d_\Sigma$, then
\[
\nu\!\left(B_\Sigma(\omega,r/C_0)\right)\le\mu(B(x,r))\le\nu\!\left(B_\Sigma(\omega,r/c_0)\right).
\]
Thus $\mu(B(x,r))\approx r^s$ for all sufficiently small $r$, and the remaining bounded range of scales is absorbed by changing the constant. The standard mass-distribution and $5r$-covering arguments then give $\mu\approx\Hs|_K$.
\end{proof}

\section{Discrete potentials and the three error terms}
Define
\begin{equation}\label{eq:gn}
g_n(x)=N^{-n}\sum_{\substack{y\in K_n\\y\ne x}}\log|x-y|,\qquad x\in K_n.
\end{equation}
If $x=x_{Qj}\in K_n$ and $Q=\widehat x\in K_{n-1}$ is its parent, then a direct regrouping gives
\begin{equation}\label{eq:decomposition}
g_n(x)=g_{n-1}(Q)+N^{-n}\bigl(\Delta_1(x)+\Delta_2(x)+\Delta_3(x)\bigr),
\end{equation}
where
\begin{align}
\Delta_1(x)&=\sum_{\substack{z\in\{x_{Qk}:0\le k<N\}\\z\ne x}}\log|z-x|,\label{eq:D1}\\
\Delta_2(x)&=N\sum_{\substack{y\in K_{n-1}\\y\ne Q}}\bigl(\log|y-x|-\log|y-Q|\bigr),\label{eq:D2}\\
\Delta_3(x)&=\sum_{\substack{y\in K_{n-1}\\y\ne Q}}\sum_{z\in\{x_{yk}:0\le k<N\}}\bigl(\log|z-x|-\log|y-x|\bigr).\label{eq:D3}
\end{align}
For a fixed $x\in K_n$, the term $\Delta_1$ is the local interaction between siblings, $\Delta_2$ measures the displacement from the parent in the exterior field, and $\Delta_3$ measures the error made by replacing each exterior family by its barycenter.



We recall some notation for the discrete Fourier
decomposition on the $N$-th roots of unity.
Let $\zeta_j=e^{2\pi i j/N}$, $0\le j<N$.
For a real-valued function $f$ on these points, write
\[
\widehat f(k)
=\frac1N\sum_{j=0}^{N-1}f(\zeta_j)\zeta_j^{-k},
\qquad
\overline f=\widehat f(0).
\]
For $1\le k<N/2$, define
\[
P_k f(\zeta_j)
=2\operatorname{Re}\bigl(\widehat f(k)\zeta_j^k\bigr).
\]
For $2\le m\le N/2$, set
\[
P_{\ge m}f
=f-\overline f-\sum_{k=1}^{m-1}P_k f.
\]
In particular,
\[
f=\overline f+P_1f+P_{\ge2}f.
\]
We also write $P_{\mathrm{nc}}f=f-\overline f$.

\section{The first-mode inductive construction}
\label{sec:first-mode-induction}

\subsection{The inductive goal and the three terms}

We explain how to pass from one generation of the circular Cantor
construction to the next. Fix $n\ge2$ and suppose that $K_{n-1}$ is
already constructed. Every parameter $b_R,d_R$ used at a vertex
$R\in K_0\cup\cdots\cup K_{n-2}$ is now fixed.

At the current parents $Q\in K_{n-1}$, we have yet to choose
\[
 b_Q=e^{t_Q}>0,\qquad d_Q\in\mathbb C,
\]
and we write
\[
 \mathbf t=(t_Q)_{Q\in K_{n-1}},\qquad
 \mathbf d=(d_Q)_{Q\in K_{n-1}}.
\]
These parameters determine
\[
 x_{Qj}=Q+\rho b_Qr_Q(\zeta_j+d_Q\zeta_j^2),
 \qquad 0\le j<N.
\]

In the estimates below we first consider arbitrary admissible
trial parameters satisfying
\[
 |t_Q|\le\varepsilon_0,\qquad |d_Q|\le C_0\tau.
\]
Only after establishing the estimates do we choose the parameters
explicitly and verify these bounds.

Recall the exact decomposition
\[
 g_n(x_{Qj})
 =g_{n-1}(Q)+N^{-n}F_Q(\mathbf d,\mathbf t;j),
\]
where
\[
 F_Q(\mathbf d,\mathbf t;j)
 :=\Delta_1(x_{Qj})+\Delta_2(x_{Qj})+\Delta_3(x_{Qj}),
\]
and
\[
 \overline F_Q
 :=\frac1N\sum_{j=0}^{N-1}F_Q(\mathbf d,\mathbf t;j).
\]

Here the dependence on the full parameter vectors matters:
$\Delta_1$ and $\Delta_2$ at $Q$ depend on $b_Q,d_Q$, whereas
$\Delta_3$ also involves the parameters of other current parents.
All Fourier projections and sup norms on $F_Q$ are taken over
$j=0,\ldots,N-1$.

We assume inductively that
\begin{equation}\label{eq:induction-hypothesis}
 |g_{n-1}(Q)-c_{n-1}|
 \le A N^{-(n-1)},\qquad Q\in K_{n-1},
\end{equation}
where
\[
 A=KN\tau^2
\]
and $K\ge1$ is a fixed constant to be chosen at the end.
We require $N\tau^2$ to be sufficiently small. Since
$\tau=N\lambda$ and $\lambda=N^{-1/s}$, this is possible
for $s<2/3$ by taking $N$ sufficiently large.

Our goal is to find a number $c_n$ such that
\[
 |g_n(x)-c_n|\le AN^{-n},\qquad x\in K_n.
\]
The key observation is
\[
\begin{aligned}
 g_n(x_{Qj})-c_n
 ={}&
 \bigl[g_{n-1}(Q)+N^{-n}\overline F_Q-c_n\bigr]+N^{-n}\bigl[F_Q(j)-\overline F_Q\bigr].
\end{aligned}
\]
Thus we need to control two quantities: the mean error and
the nonconstant oscillation.

The three terms in $F_Q$ play different roles. The local
term $\Delta_1$ allows us to correct both the mean and the first
Fourier mode. The exterior term $\Delta_2$ provides the leading
first-mode error, of size $O(N\tau)$, which must be canceled.
The parent-replacement term $\Delta_3$ will be exponentially small.

We first establish the estimates for these three terms.
We then choose $c_n$, next the scalar parameters $b_Q$, and
finally the shape parameters $d_Q$, checking that the last
correction does not undo the preceding one.

Throughout this section the deformation is quadratic:
\[
 V_Q(\zeta)=\zeta+d_Q\zeta^2.
\]
Higher-order polynomial deformations will be treated separately.

\subsection{The local interaction and the response to $b$ and $d$}

We begin with the local interaction $\Delta_1$. Our aim is to
obtain two different responses: multiplying the sibling
configuration by $b$ changes its mean exactly, whereas the
quadratic deformation produces a first Fourier mode whose
leading term is $(N-2)\Re(d\zeta_j)$.

Recall that
\[
 \zeta_j=e^{2\pi i j/N},\qquad j=0,\ldots,N-1,
\]
and put
\[
 V_d(\zeta)=\zeta+d\zeta^2.
\]
For $b>0$, define the model local interaction
\[
 \Delta_1(j;d,b)
 :=
 \sum_{\substack{0\le k<N\\k\ne j}}
 \log\Bigl(b\,|V_d(\zeta_j)-V_d(\zeta_k)|\Bigr).
\]
Thus $j\mapsto\Delta_1(j;d,b)$ is a real-valued function
on the $N$ roots of unity.

In the actual construction,
\[
 \Delta_1(x_{Qj})
 =(N-1)\log(\rho r_Q)+\Delta_1(j;d_Q,b_Q).
\]
The first term is independent of $j$ and contributes only to
the constant Fourier mode. We use the abbreviations
\[
 \Delta_1(d,b):=\Delta_1(\cdot;d,b),
 \qquad
 \Delta_1(d):=\Delta_1(d,1),
\]
and
\[
 \overline{\Delta_1(d,b)}
 :=\frac1N\sum_{j=0}^{N-1}\Delta_1(j;d,b).
\]
All $\ell^\infty$ norms in the following lemma are taken over
the child index $j$.

\begin{lemma}\label{lem:D1-response}
Let $|d|\le d_0$, where $d_0>0$ is a sufficiently small
absolute constant. Then
\begin{equation}\label{eq:D1-first-mode}
 \left\|
 P_1\Delta_1(d)-(N-2)\Re(d\zeta_j)
 \right\|_\infty
 \le CN|d|^2.
\end{equation}
The remaining nonconstant Fourier modes satisfy
\begin{equation}\label{eq:D1-higher-modes}
 \|P_{\ge2}\Delta_1(d)\|_\infty
 \le CN|d|^2.
\end{equation}
Moreover,
\begin{equation}\label{eq:D1-mean-d}
 \left|
 \overline{\Delta_1(d)}-\overline{\Delta_1(0)}
 \right|
 \le CN(C|d|)^N.
\end{equation}
Finally, for every $b>0$,
\begin{equation}\label{eq:D1-b-scaling}
 \Delta_1(j;d,b)
 =\Delta_1(j;d,1)+(N-1)\log b.
\end{equation}
Consequently, changing $b$ affects only the constant Fourier mode.
\end{lemma}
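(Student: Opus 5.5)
The plan starts from the factorization already used in Lemma~\ref{lem:V-bilip}:
\[
 V_d(\zeta_j)-V_d(\zeta_k)=(\zeta_j-\zeta_k)\bigl(1+d(\zeta_j+\zeta_k)\bigr).
\]
Taking logarithms of the modulus of $b$ times this expression gives
\[
 \Delta_1(j;d,b)=(N-1)\log b+\Psi_{N,0}(j)+\sum_{k\ne j}\Re\log\bigl(1+d(\zeta_j+\zeta_k)\bigr).
\]
Here $\Psi_{N,0}(j)=\log N$ by \eqref{eq:roots}. The term $\log b$ pulls out exactly, which gives \eqref{eq:D1-b-scaling} immediately. Hence everything reduces to the function
\[
 E(j)=\sum_{k\ne j}\Re\log\bigl(1+d(\zeta_j+\zeta_k)\bigr),
\]
and $\Delta_1(d)=\log N+E$.

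For the first-mode and higher-mode bounds, I would expand $\log(1+u)=u+O(|u|^2)$. This is legitimate because $|u|\le 2|d|\le 2d_0$. The linear part is computed exactly, as in \eqref{eq:linear-response-unperturbed}: it equals $(N-2)\Re(d\zeta_j)$, which is a pure first Fourier mode. The remainder $R(j)$ then satisfies $\|R\|_\infty\le CN|d|^2$. Since $P_1$ is given by a convolution with a kernel of $\ell^1$-norm at most $2$, and $P_{\ge2}=P_{\mathrm{nc}}-P_1$ with $\|P_{\mathrm{nc}}f\|_\infty\le2\|f\|_\infty$, all these projections are bounded on $\ell^\infty$ with absolute constants. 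Applying them to $R$ gives \eqref{eq:D1-first-mode} and \eqref{eq:D1-higher-modes}, because the linear part has no $P_{\ge2}$ component and no mean.

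The mean estimate \eqref{eq:D1-mean-d} is the main point, since a naive bound only gives $O(N|d|^2)$. I would write
\[
 \overline{E}=\frac1N\sum_j\sum_{k\ne j}\Re\log\bigl(1+d(\zeta_j+\zeta_k)\bigr)
\]
and expand the logarithm as a full power series,
\[
 -\sum_{m\ge1}\frac{(-d)^m}{m}(\zeta_j+\zeta_k)^m .
\]
This converges absolutely for $|d|<1/2$. Including the diagonal terms $k=j$ and subtracting them separately, the full double sum $\sum_{j,k}(\zeta_j+\zeta_k)^m$ expands binomially into products $\sum_j\zeta_j^a\sum_k\zeta_k^{m-a}$. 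By \eqref{eq:roots} each factor vanishes unless its exponent is divisible by $N$. For $m<N$ this forces $a=0$ and $a=m$ simultaneously, which is impossible, so the full sum vanishes.

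The diagonal terms give $\sum_j\log(1+2d\zeta_j)$. Their real parts have power series in $d\zeta_j$, and the sum over $j$ kills every power not divisible by $N$. Thus the diagonal contribution is also $O(N(C|d|)^N)$. What survives in both pieces are the terms with $m\ge N$. Each such term is bounded by $N^2(2|d|)^m/m$ in the double sum, so after dividing by $N$ the total is at most $\sum_{m\ge N}N(2|d|)^m\le CN(C|d|)^N$. Since $\overline{\Delta_1(0)}=\log N$, this is \eqref{eq:D1-mean-d}.

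The only delicate point is bookkeeping. Each truncated and diagonal piece must separately have vanishing power-sum coefficients below order $N$. This follows because every piece is a symmetric average of monomials $\zeta^a$ with $0<|a|<N$.
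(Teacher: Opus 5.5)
Your proposal is correct and follows essentially the same route as the paper: the factorization through $1+d(\zeta_j+\zeta_k)$, exact extraction of $(N-1)\log b$, the linear term $(N-2)\Re(d\zeta_j)$ with an $O(N|d|^2)$ remainder controlled by $\ell^\infty$-bounded projections, and a full power-series expansion in which every order $1\le m<N$ averages to zero. Your ``full double sum minus diagonal'' bookkeeping for the mean is the same identity the paper uses termwise, namely $\sum_j\sum_{k\ne j}\zeta_j^r\zeta_k^{m-r}=\bigl(\sum_j\zeta_j^r\bigr)\bigl(\sum_k\zeta_k^{m-r}\bigr)-\sum_j\zeta_j^m=0$.
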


\begin{proof}
We first fix $b=1$   compare the deformed configuration
with the regular $N$-gon. For $k\ne j$,
\[
\begin{aligned}
 V_d(\zeta_j)-V_d(\zeta_k)
 &=(\zeta_j-\zeta_k)+d(\zeta_j^2-\zeta_k^2)=(\zeta_j-\zeta_k)
   \bigl(1+d(\zeta_j+\zeta_k)\bigr).
\end{aligned}
\]
Hence
\begin{equation}\label{eq:D1-response-log}
\begin{aligned}
 &\Delta_1(j;d,1)-\Delta_1(j;0,1)=\Re\sum_{k\ne j}
   \log\bigl(1+d(\zeta_j+\zeta_k)\bigr).
\end{aligned}
\end{equation}

Since $|\zeta_j+\zeta_k|\le2$ and $|d|\le d_0$,
we may assume
\[
 |d(\zeta_j+\zeta_k)|\le\frac12.
\]
Using
\[
 \log(1+w)=w+O(|w|^2),
\]
we obtain
\begin{equation}\label{eq:D1-response-first-expansion}
 \Delta_1(j;d,1)-\Delta_1(j;0,1)
 =
 \Re\left[d\sum_{k\ne j}(\zeta_j+\zeta_k)\right]+R_j,
\end{equation}
where
\[
 \|R\|_\infty\le CN|d|^2.
\]

By the roots-of-unity identity,
\[
 \sum_{k\ne j}\zeta_k=-\zeta_j,
\]
and therefore
\[
 \sum_{k\ne j}(\zeta_j+\zeta_k)
 =(N-2)\zeta_j.
\]
Thus
\begin{equation}\label{eq:D1-response-main}
 \Delta_1(j;d,1)-\Delta_1(j;0,1)
 =(N-2)\Re(d\zeta_j)+R_j,
 \qquad
 \|R\|_\infty\le CN|d|^2.
\end{equation}

The function $j\mapsto\Re(d\zeta_j)$ belongs exactly to
the first real Fourier mode. On the other hand,
$\Delta_1(j;0,1)$ is independent of $j$. Applying $P_1$ and $P_{\ge2}$ to
\eqref{eq:D1-response-main}, and using the uniform
$\ell^\infty$ bounds for these projections, proves
\eqref{eq:D1-first-mode} and \eqref{eq:D1-higher-modes}.

We next estimate the variation of the mean.
Expanding the logarithm in \eqref{eq:D1-response-log},
\begin{equation}\label{eq:D1-full-log-expansion}
 \Delta_1(j;d,1)-\Delta_1(j;0,1)
 =
 \Re\sum_{m=1}^{\infty}
 \frac{(-1)^{m+1}}m d^m
 \sum_{k\ne j}(\zeta_j+\zeta_k)^m.
\end{equation}
We claim that, after averaging over $j$, every term
with $1\le m<N$ vanishes. Indeed,
\[
\begin{aligned}
 &\sum_j\sum_{k\ne j}(\zeta_j+\zeta_k)^m=
 \sum_{r=0}^m\binom mr
 \sum_j\sum_{k\ne j}\zeta_j^r\zeta_k^{m-r}.
\end{aligned}
\]
If $0<r<m$, then
\[
\begin{aligned}
 \sum_j\sum_{k\ne j}\zeta_j^r\zeta_k^{m-r}
 &=
 \left(\sum_j\zeta_j^r\right)
 \left(\sum_k\zeta_k^{m-r}\right)
 -\sum_j\zeta_j^m=0.
\end{aligned}
\]
The cases $r=0$ and $r=m$ also vanish since
$\sum_j\zeta_j^m=0$ for $1\le m<N$. Consequently,
\begin{equation}\label{eq:D1-mean-cancellation}
 \frac1N\sum_j\sum_{k\ne j}
 (\zeta_j+\zeta_k)^m=0,
 \qquad 1\le m<N.
\end{equation}
Only terms with $m\ge N$ contribute to the mean.
Since $|\zeta_j+\zeta_k|\le2$,
\[
\begin{aligned}
 \left|
 \overline{\Delta_1(d)}-\overline{\Delta_1(0)}
 \right|
 &\le
 \frac1N\sum_j\sum_{k\ne j}
 \sum_{m=N}^{\infty}\frac{(2|d|)^m}{m}\le CN\sum_{m=N}^{\infty}(2|d|)^m\le CN(C|d|)^N.
\end{aligned}
\]
This proves \eqref{eq:D1-mean-d}.

Finally, multiplying all sibling distances by $b$
adds $\log b$ to each of the $N-1$ logarithmic terms.
Hence
\[
 \Delta_1(j;d,b)
 =\Delta_1(j;d,1)+(N-1)\log b,
\]
which proves \eqref{eq:D1-b-scaling}.
\end{proof}

\subsection{The exterior field and the coefficients $A_p(Q)$}

The exterior term $\Delta_2$ measures how the potential created
by the other parents changes when the observation point moves
from $Q$ to one of its children. We first express this change in powers of the child displacement.
The main task is then to estimate the coefficients $A_p(Q)$,
particularly $A_1(Q)$.

Fix $Q\in K_{n-1}$. The geometry of $K_{n-1}$ is determined
by the parameters attached to earlier generations:
\[
 R\in K_0\cup\cdots\cup K_{n-2},
 \qquad
 |\log b_R|\le\varepsilon_0,\quad |d_R|\le C_0\tau.
\]
These previous parameters are already fixed. The new parameters $b_Q,d_Q$ do not enter $A_p(Q)$,
since the coefficients depend only on $K_{n-1}$. For trial parameters at $Q$, write
\[
 x_{Qj}=Q+\rho b_Qr_QV_{d_Q}(\zeta_j),
 \qquad
 h_j=x_{Qj}-Q.
\]
By the separation estimate,
\[
 |y-Q|\gtrsim\frac{r_Q}{\tau},
 \qquad y\in K_{n-1}\setminus\{Q\},
\]
while $|h_j|\lesssim r_Q$. Consequently,
\[
 \left|\frac{h_j}{y-Q}\right|\le C\tau.
\]

For $\tau$ sufficiently small, we may expand
\[
 y-x_{Qj}
 =(y-Q)\left(1-\frac{h_j}{y-Q}\right).
\]
Thus
\[
\begin{aligned}
 \Delta_2(x_{Qj})
 &=N\Re\sum_{y\ne Q}
 \log\left(1-\frac{h_j}{y-Q}\right)=-N\Re\sum_{y\ne Q}\sum_{p\ge1}
 \frac1p\left(\frac{h_j}{y-Q}\right)^p.
\end{aligned}
\]
The series converges absolutely. We obtain
\begin{equation}\label{eq:D2-expansion}
 \Delta_2(x_{Qj})
 =-\Re\sum_{p\ge1}\frac{A_p(Q)}p h_j^p,
 \qquad
 A_p(Q)=N\sum_{\substack{y\in K_{n-1}\\y\ne Q}}
 \frac1{(y-Q)^p}.
\end{equation}

The next lemma gives the estimates needed later.
For $p\ge2$, absolute summation is sufficient.
For $p=1$, we shall use the cancellation of the circular
configuration to avoid a logarithmic loss.

\begin{lemma}[Exterior-field coefficients for polynomial configurations]
\label{lem:Ap}
Fix an integer $M\ge1$. Suppose that, up to generation $n-1$, all
children configurations have the form
\[
 x_{Pj}=x_P+\rho b_P r_P V_P(\zeta_j),\qquad
 V_P(z)=z+\sum_{m=1}^{M}d_{P,m}z^{m+1},
 \qquad \zeta_j=e^{2\pi i j/N},
\]
where $\rho=1/10$, $|\log b_P|\le\varepsilon_0$, and
\[
 |d_{P,m}|\le B_M\tau^m,\qquad 1\le m\le M,
 \qquad \tau=N\lambda.
\]
Here $B_M$ is fixed and the smallness required of $\tau$ may depend on
$M,B_M,\varepsilon_0$. For $Q\in K_{n-1}$ set
\[
 A_p(Q)=N\sum_{\substack{y\in K_{n-1}\\y\ne Q}}
 \frac1{(y-Q)^p},\qquad p\ge1.
\]
There is a constant $C_M\ge1$, independent of $N,n,Q$ (and $p$),
such that
\begin{equation}\label{eq:A1}
 r_Q|A_1(Q)|\le C_M N\tau,
\end{equation}
and, for every integer $p\ge2$,
\begin{equation}\label{eq:Ap}
 r_Q^p|A_p(Q)|\le C_M^p N\tau^p.
\end{equation}
\end{lemma}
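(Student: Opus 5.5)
We decompose the sum over $y\in K_{n-1}\setminus\{Q\}$ according to the generation of the nearest common ancestor. Write the ancestors of $Q$ as $Q_0=\varnothing,Q_1,\dots,Q_{n-1}=Q$. For $0\le k\le n-2$, let $S_k$ be the set of $y\in K_{n-1}$ that descend from $Q_k$ but not from $Q_{k+1}$; equivalently, $y\in\Desc_{n-1}(Q_kj)$ for a child $Q_kj\ne Q_{k+1}$. These sets partition $K_{n-1}\setminus\{Q\}$, and $|S_k|\le N^{n-1-k}$.

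\textbf{Step 1: geometry of $S_k$.} The separation and localization estimates carry over to general polynomial configurations. Lemma~\ref{lem:V-bilip} holds because $V_P(\zeta)-V_P(\eta)=(\zeta-\eta)(1+\sum_m d_{P,m}\sum_{a+b=m}\zeta^a\eta^b)$ and $\sum_m (m+1)B_M\tau^m$ is small. So Lemmas~\ref{lem:cylinder} and \ref{lem:separation} hold. For $y\in\Desc_{n-1}(Q_kj)$ with $Q_kj\ne Q_{k+1}$, this gives
\[
|y-Q|\ge c\,r_{Q_k}|\zeta_j-\zeta_{j_0}|\ge c\,r_{Q_k}/N,
\]
where $Q_{k+1}=Q_kj_0$.

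\textbf{Step 2: the case $p\ge2$.} Sum absolutely, organized by generation and then by angular distance:
\[
N\sum_{y\in S_k}|y-Q|^{-p}\le N\cdot N^{n-2-k}\sum_{j\ne j_0}\bigl(c\,r_{Q_k}|\zeta_j-\zeta_{j_0}|\bigr)^{-p}\lesssim C^p N^{n-1-k}\,r_{Q_k}^{-p}N^{p}.
\]
Here we used $\sum_{l\ge1}(N/l)^p\le C^pN^p$ for $p\ge2$. Since $r_Q/r_{Q_k}=\lambda^{n-1-k}$ and $N\lambda=\tau$, multiplying by $r_Q^p$ gives the bound
\[
C^pN^{n-1-k}N^p\lambda^{p(n-1-k)}=C^p N^p\,N^{(1-p)(n-1-k)}\tau^{p(n-1-k)}N^{-p(n-1-k)}\cdot N^{p(n-1-k)}\cdots
\]
It is cleaner to set $\ell=n-1-k\ge1$, so the term equals $C^pN^{\ell+p}\lambda^{p\ell}=C^pN^{\ell(1-p)+p}\tau^{p\ell}$. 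For $\ell=1$ this is $C^pN\tau^p$. For $\ell\ge2$ the terms decay geometrically, with ratio $N^{1-p}\tau^p\ll1$. Summing over $\ell$ gives \eqref{eq:Ap}.

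\textbf{Step 3: the case $p=1$, which is the main obstacle.} Absolute summation fails here, because $\sum_{l}N/l$ produces a $\log N$ loss. We use cancellation instead.

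Within $S_k$, first replace each $y\in\Desc_{n-1}(Q_kj)$ by the barycenter $x_{Q_kj}$, using \eqref{eq:barycenter}. The function $z\mapsto 1/(z-Q)$ is harmonic away from $Q$, so averaging over a cylinder of radius $\lesssim r_{Q_kj}$ centered at $x_{Q_kj}$ changes the value only at second order, by about $r_{Q_kj}^2/|x_{Q_kj}-Q|^3$. We iterate this down the tree: the mean of each child configuration is its parent, since $\sum_j V_P(\zeta_j)=0$ for $1\le m+1<N$. The resulting errors are bounded by sums of the $p=3$ type from Step 2, which are acceptable.

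This leaves the main sum $N\cdot N^{\ell-1}\sum_{j\ne j_0}(x_{Q_kj}-Q)^{-1}$. Write $Q=x_{Q_kj_0}+e$ with $|e|\lesssim r_{Q_{k+1}}$, and expand around $x_{Q_kj_0}$. The leading term is
\[
\sum_{j\ne j_0}\frac{1}{\rho b\,r_{Q_k}\bigl(V(\zeta_j)-V(\zeta_{j_0})\bigr)}.
\]
For the exact $N$-gon, $\sum_{j\ne j_0}(\zeta_j-\zeta_{j_0})^{-1}=-\tfrac{N-1}{2}\zeta_{j_0}^{-1}$, which is $O(N)$ with no logarithm. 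The polynomial perturbation multiplies each term by $1+O(\tau)$ uniformly; expand this factor to first order in $d$ to keep the $O(N)$ bound. The error term, involving $e$, is controlled by the $p=2$ sum and is $O(N^2 r_{Q_{k+1}}/r_{Q_k}^2)$.

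Collecting terms, the contribution of level $\ell$ to $r_Q|A_1(Q)|$ is
\[
\lesssim N^\ell\lambda^\ell N+\text{(smaller terms)}=N\tau^\ell .
\]
Summing the geometric series in $\ell$ gives \eqref{eq:A1}.

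The delicate point is the perturbed $N$-gon sum. A first-order expansion of $1/(V(\zeta_j)-V(\zeta_{j_0}))$ produces terms such as $\sum_j (\zeta_j+\zeta_{j_0})/(\zeta_j-\zeta_{j_0})$, and these must also be shown to be $O(N)$ via roots-of-unity identities. The remainder, of order $|d|^2\sum_j 1/|\zeta_j-\zeta_{j_0}|\sim \tau^2N\log N$, is harmless only if $\tau^2\log N\lesssim 1$, which is certainly true.
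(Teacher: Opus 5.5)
Your proposal is correct and has the same architecture as the paper's proof: genealogical coronas, replacement of each branch by its barycenter, absolute summation for $p\ge2$, the exact identity $\sum_{k\ne k_0}(\zeta_k-\zeta_{k_0})^{-1}=-(N-1)/(2\zeta_{k_0})$ for $p=1$, and the shift from $x_{Q_kj_0}$ to $Q$ paid for by the $p=2$ sum.

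In the barycenter step, the operative fact is the exact identity \eqref{eq:barycenter}, which kills the linear Taylor term in one step across all generations. Harmonicity of $1/(z-Q)$ plays no role, since a discrete average over descendants is not a mean-value average, and no iteration down the tree is needed.

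Two differences are worth recording.

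\emph{The case $p\ge2$.} You bound $|y-Q|^{-p}$ directly, with no barycenter replacement. This is slightly more economical than the paper's Steps 3--4 and gives the same $C^pN\tau^p(N\lambda^p)^{\ell-1}$.

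\emph{The perturbed $N$-gon sum.} Write $V(z)-V(a)=(z-a)(1+U(z,a))$ with $U=\sum_m d_mS_m(z,a)$. You expand $(1+U)^{-1}$ to first order. The linear terms are indeed $O_m(N)$: write $S_m(z,a)=(m+1)a^m+(z-a)q_m(z,a)$ with $q_m$ a bounded polynomial, and reuse the root identity. The quadratic remainder, however, costs $\tau^2N\log N$, so you need $\tau^2\log N\lesssim1$. This does not follow from smallness of $\tau$ alone, which is all the lemma's hypotheses give. It holds only because $\tau=N^{1-1/s}$ with $s$ fixed, so your threshold on $N$ depends on $s$. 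In that regime even $\tau\log N\to0$, so the crude bound $(1+U)^{-1}=1+O(\tau)$ would already suffice and the first-order expansion is superfluous.

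The paper avoids the logarithm entirely. Since $z\mapsto1+U(z,a)$ is $C_M\tau$-Lipschitz on the closed disk, $|(1+U(z,a))^{-1}-(1+U(a,a))^{-1}|\le C_M\tau|z-a|$. Hence $\frac{1}{V(z)-V(a)}=\frac{1}{V'(a)(z-a)}+O_M(\tau)$ with no singular factor $|z-a|^{-1}$ in the error. Summing gives an $O_M(N\tau)$ error whenever $\tau$ is small, with no explicit roots-of-unity computation for the correction terms. Subtracting the value at $z=a$ in the same way from your quadratic remainder $U^2/(1+U)$ would also remove your log loss.
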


\begin{proof}
Throughout the proof $C_M$ is allowed to change from line to line;
its dependence on the fixed coefficient bound $B_M$ is understood.
We assume that $\tau$ is sufficiently small, and also that $N$ is
sufficiently large, depending only on the fixed parameters.

\smallskip
\noindent\emph{Step 1: genealogical coronas and their geometry.}
Fix $Q\in K_{n-1}$. For $1\le\ell\le n-1$, denote by $P_\ell$
the ancestor of $Q$ lying $\ell$ generations above $Q$, and put
\[
 R_\ell=r_{P_\ell}=\lambda^{-\ell}r_Q.
\]
Let $k_0=k_0(\ell,Q)$ be the unique index for which $Q$ descends from
the child $P_\ell k_0$. The centers of the children of $P_\ell$ are
\[
 c_{\ell,k}=x_{P_\ell k}
 =x_{P_\ell}+\rho b_{P_\ell}R_\ell
   V_{P_\ell}(\zeta_k),\qquad 0\le k<N.
\]
For $k\ne k_0$, let $\mathcal B_{\ell,k}$ denote the points of
$K_{n-1}$ which descend from $c_{\ell,k}$. The $\ell$-th corona
is the disjoint union of these $N-1$ branches, each containing
$N^{\ell-1}$ points. In particular,
\[
 \#\mathcal B_{\ell,k}=N^{\ell-1},\qquad
 \#\mathcal Y_\ell(Q)=(N-1)N^{\ell-1}.
\]
By the barycenter identity for each family of descendants,
\begin{equation}\label{eq:Ap-barycenter}
 \sum_{y\in\mathcal B_{\ell,k}}(y-c_{\ell,k})=0.
\end{equation}
Moreover, the cylinder-diameter estimate gives
\begin{equation}\label{eq:Ap-branch-radius}
 |Q-c_{\ell,k_0}|\le C_M\lambda R_\ell,
 \qquad
 \sup_{y\in\mathcal B_{\ell,k}}|y-c_{\ell,k}|
 \le C_M\lambda R_\ell.
\end{equation}

We next justify separation uniformly in $N$. Since
\[
 V_P(z)-z=\sum_{m=1}^M d_{P,m}z^{m+1},\qquad
 \sup_{|z|\le1}|V_P'(z)-1|
 \le\sum_{m=1}^M(m+1)|d_{P,m}|\le C_M\tau,
\]
for any $z,w$ in the closed unit disk the identity
\[
 V_P(z)-V_P(w)
 =(z-w)\int_0^1 V_P'\bigl(w+t(z-w)\bigr)\,dt
\]
shows that
\[
 (1-C_M\tau)|z-w|\le |V_P(z)-V_P(w)|
 \le(1+C_M\tau)|z-w|.
\]
As $b_P$ is bounded above and below by fixed positive constants,
\begin{equation}\label{eq:Ap-sibling-distance}
 |c_{\ell,k}-c_{\ell,k_0}|
 \ge c_M R_\ell|\zeta_k-\zeta_{k_0}|.
\end{equation}
For distinct roots of unity,
\[
 |\zeta_k-\zeta_{k_0}|
 =2\left|\sin\frac{\pi(k-k_0)}N\right|
 \ge \frac{c}{N}.
\]
Combining this inequality with \eqref{eq:Ap-branch-radius},
\eqref{eq:Ap-sibling-distance}, and $N\lambda=\tau$, we get,
for $\tau$ sufficiently small,
\begin{equation}\label{eq:Ap-center-separation}
 |c_{\ell,k}-Q|\ge c_M R_\ell
 |\zeta_k-\zeta_{k_0}|,\qquad k\ne k_0.
\end{equation}
In particular, by \eqref{eq:Ap-branch-radius},
\[
 \frac{|y-c_{\ell,k}|}{|c_{\ell,k}-Q|}
 \le C_M\frac{\lambda}{|\zeta_k-\zeta_{k_0}|}
 \le C_M N\lambda=C_M\tau<\tfrac12
 \quad (y\in\mathcal B_{\ell,k}).
\]

\smallskip
\noindent\emph{Step 2: replacing every branch by its barycenter.}
Let $f_p(z)=(z-Q)^{-p}$, $p\ge1$. For
$y\in\mathcal B_{\ell,k}$ write
\[
 y-Q=(c_{\ell,k}-Q)(1+u_y),\qquad
 u_y=\frac{y-c_{\ell,k}}{c_{\ell,k}-Q},\qquad |u_y|<\tfrac12.
\]
The Taylor estimate
\[
 (1+u)^{-p}=1-pu+O(C^p|u|^2),\qquad |u|\le\tfrac12,
\]
valid uniformly for all integers $p\ge1$ (after enlarging the
absolute constant $C$), gives
\[
 \frac1{(y-Q)^p}
 =\frac1{(c_{\ell,k}-Q)^p}
 -\frac{p(y-c_{\ell,k})}{(c_{\ell,k}-Q)^{p+1}}
 +O\left(\frac{C^p|y-c_{\ell,k}|^2}
 {|c_{\ell,k}-Q|^{p+2}}\right).
\]
Summing over $y\in\mathcal B_{\ell,k}$, the linear term vanishes
by \eqref{eq:Ap-barycenter}. Hence
\begin{equation}\label{eq:Ap-branch-replacement}
 \begin{split}
 \sum_{y\in\mathcal B_{\ell,k}}\frac1{(y-Q)^p}
 &=\frac{N^{\ell-1}}{(c_{\ell,k}-Q)^p}+E_{\ell,k,p},\\
 |E_{\ell,k,p}|
 &\le C_M^p N^{\ell-1}
 \frac{(\lambda R_\ell)^2}{|c_{\ell,k}-Q|^{p+2}}.
 \end{split}
\end{equation}
It follows that $A_p(Q)$ is the sum, over the finitely many
coronas, of the main terms
\[
 A_{p,\ell}^{\mathrm{main}}
 :=N N^{\ell-1}\sum_{k\ne k_0}
 \frac1{(c_{\ell,k}-Q)^p}
\]
and the replacement errors $N\sum_{k\ne k_0}E_{\ell,k,p}$.

\smallskip
\noindent\emph{Step 3: estimating the main terms when $p\ge2$.}
The separation of the roots of unity implies
\begin{equation}\label{eq:Ap-root-sums}
 \sum_{k\ne k_0}|\zeta_k-\zeta_{k_0}|^{-p}
 \le C^pN^p\sum_{j=1}^{\infty}j^{-p}
 \le C^pN^p,\qquad p\ge2.
\end{equation}
Indeed, there are at most two roots at circular index distance $j$,
and $|\zeta_k-\zeta_{k_0}|\ge c\,j/N$ for
$1\le j\le\lfloor N/2\rfloor$.
By \eqref{eq:Ap-center-separation} and \eqref{eq:Ap-root-sums},
\[
 \sum_{k\ne k_0}|c_{\ell,k}-Q|^{-p}
 \le C_M^p N^p R_\ell^{-p}.
\]
Consequently,
\begin{align*}
 r_Q^p|A_{p,\ell}^{\mathrm{main}}|
 &\le C_M^p r_Q^p N N^{\ell-1}N^pR_\ell^{-p}=C_M^p N^{p+\ell}\lambda^{p\ell}=C_M^pN\tau^p(N\lambda^p)^{\ell-1}.
\end{align*}
For $p\ge2$,
\[
 N\lambda^p\le N\lambda^2=\frac{\tau^2}{N}<\frac12.
\]
Thus the sum of all main contributions is bounded by
\begin{equation}\label{eq:Ap-main-p}
 r_Q^p\sum_{\ell\ge1}|A_{p,\ell}^{\mathrm{main}}|
 \le C_M^pN\tau^p.
\end{equation}

\smallskip
\noindent\emph{Step 4: replacement errors when $p\ge2$.}
Using \eqref{eq:Ap-branch-replacement},
\eqref{eq:Ap-center-separation}, and the root sum
\eqref{eq:Ap-root-sums} with exponent $p+2$, we find
\begin{align*}
 r_Q^p N\sum_{k\ne k_0}|E_{\ell,k,p}|
 &\le C_M^p r_Q^p N N^{\ell-1}(\lambda R_\ell)^2
 \sum_{k\ne k_0}|c_{\ell,k}-Q|^{-p-2}\\
 &\le C_M^p r_Q^p N^\ell(\lambda R_\ell)^2
 N^{p+2}R_\ell^{-p-2}= C_M^p N^{p+\ell+2}\lambda^{p\ell+2}= C_M^p\tau^2 N\tau^p(N\lambda^p)^{\ell-1}.
\end{align*}
The extra factor $\tau^2$ makes these errors smaller than the
corresponding main bounds. Summing in $\ell$ and combining with
\eqref{eq:Ap-main-p} proves \eqref{eq:Ap}.

\smallskip
\noindent\emph{Step 5: the circular cancellation for $p=1$.}
Here the absolute-value bound
\[
 \sum_{k\ne k_0}|\zeta_k-\zeta_{k_0}|^{-1}
 \le CN\log N
\]
would produce a loss. We therefore exploit the exact identity
\begin{equation}\label{eq:A1-root-identity}
 \sum_{k\ne k_0}\frac1{\zeta_k-\zeta_{k_0}}
 =-\frac{N-1}{2\zeta_{k_0}}.
\end{equation}
For completeness, let $P(z)=z^N-1$. At its simple zero
$a=\zeta_{k_0}$, the logarithmic derivative of
$P(z)/(z-a)$ gives
\[
 \sum_{k\ne k_0}\frac1{a-\zeta_k}
 =\frac{P''(a)}{2P'(a)}=\frac{N-1}{2a},
\]
which is equivalent to \eqref{eq:A1-root-identity}.

We show that this cancellation persists for the polynomially
perturbed configuration. Abbreviate $V=V_{P_\ell}$ and
$a=\zeta_{k_0}$, and define the divided difference
\[
 D_a(z)=\frac{V(z)-V(a)}{z-a}
 =1+\sum_{m=1}^M d_{P_\ell,m}
     \sum_{\nu=0}^m z^{m-\nu}a^\nu.
\]
It extends continuously to $z=a$, where $D_a(a)=V'(a)$.
Since $M$ is fixed and $|d_{P_\ell,m}|\le B_M\tau^m$,
\begin{equation}\label{eq:A1-divided-bounds}
 \sup_{|z|,|a|\le1}|D_a(z)-1|\le C_M\tau,
 \qquad
 \sup_{|z|,|a|\le1}|\partial_zD_a(z)|\le C_M\tau.
\end{equation}
In particular, $|D_a(z)|\ge1/2$ on the closed disk, and
$|D_a(a)|=|V'(a)|\ge1/2$.
Integrating the derivative of $D_a$ along the segment from $a$
to $z$ gives
\[
 |D_a(z)-D_a(a)|\le C_M\tau|z-a|.
\]
Therefore
\[
 \left|\frac1{D_a(z)}-\frac1{D_a(a)}\right|
 =\frac{|D_a(a)-D_a(z)|}{|D_a(z)D_a(a)|}
 \le C_M\tau|z-a|.
\]
Dividing by $z-a$ yields the useful pointwise identity
\begin{equation}\label{eq:A1-polynomial-difference-quotient}
 \frac1{V(z)-V(a)}
 =\frac1{V'(a)(z-a)}+O_M(\tau),
 \qquad z\ne a,\quad |z|=|a|=1.
\end{equation}
The constant in $O_M(\tau)$ is independent of $z,a,N$;
importantly, there is no factor $|z-a|^{-1}$ in the error.
Summing over $z=\zeta_k$, $k\ne k_0$, using
\eqref{eq:A1-root-identity} and $|V'(a)|\ge1/2$, gives
\begin{align*}
 \left|\sum_{k\ne k_0}
 \frac1{V(\zeta_k)-V(a)}\right|
 &\le\frac1{|V'(a)|}
 \left|\sum_{k\ne k_0}\frac1{\zeta_k-a}\right|
 +C_M(N-1)\tau\le C_MN.
\end{align*}
Because
$c_{\ell,k}-c_{\ell,k_0}
 =\rho b_{P_\ell}R_\ell
 (V(\zeta_k)-V(a))$, we conclude that
\begin{equation}\label{eq:A1-center-circular}
 \left|\sum_{k\ne k_0}
 \frac1{c_{\ell,k}-c_{\ell,k_0}}\right|
 \le\frac{C_MN}{R_\ell}.
\end{equation}

\smallskip
\noindent\emph{Step 6: replacing the distinguished center by $Q$.}
For $k\ne k_0$,
\[
 \frac1{c_{\ell,k}-Q}-
 \frac1{c_{\ell,k}-c_{\ell,k_0}}
 =\frac{Q-c_{\ell,k_0}}
 {(c_{\ell,k}-Q)(c_{\ell,k}-c_{\ell,k_0})}.
\]
By \eqref{eq:Ap-branch-radius},
\eqref{eq:Ap-center-separation}, and
\eqref{eq:Ap-sibling-distance}, its modulus is bounded by
\[
 \frac{C_M\lambda R_\ell}
 {R_\ell^2|\zeta_k-a|^2}.
\]
Using \eqref{eq:Ap-root-sums} with $p=2$,
\begin{align*}
 \sum_{k\ne k_0}\left|
 \frac1{c_{\ell,k}-Q}-
 \frac1{c_{\ell,k}-c_{\ell,k_0}}\right|
 &\le\frac{C_M\lambda}{R_\ell}
 \sum_{k\ne k_0}|\zeta_k-a|^{-2}\le\frac{C_M\lambda N^2}{R_\ell}
 =\frac{C_MN\tau}{R_\ell}
 \le\frac{C_MN}{R_\ell}.
\end{align*}
Combining this with \eqref{eq:A1-center-circular} gives
\begin{equation}\label{eq:A1-perturbed-circular}
 \left|\sum_{k\ne k_0}\frac1{c_{\ell,k}-Q}\right|
 \le\frac{C_MN}{R_\ell}.
\end{equation}
Thus, for the main term of the $\ell$-th corona,
\begin{align*}
 r_Q|A_{1,\ell}^{\mathrm{main}}|
 &\le C_M r_Q N N^{\ell-1}\frac{N}{R_\ell}=C_MN^{\ell+1}\lambda^\ell
 =C_MN\tau^\ell.
\end{align*}
Since $\tau<1/2$, summing over $\ell\ge1$ yields
\begin{equation}\label{eq:A1-main-summed}
 r_Q\sum_{\ell\ge1}|A_{1,\ell}^{\mathrm{main}}|
 \le C_MN\sum_{\ell\ge1}\tau^\ell
 \le C_MN\tau.
\end{equation}

\smallskip
\noindent\emph{Step 7: replacement errors for $p=1$.}
The estimate \eqref{eq:Ap-branch-replacement}, now with $p=1$,
and \eqref{eq:Ap-root-sums} with exponent $3$ yield
\begin{align*}
 r_Q N\sum_{k\ne k_0}|E_{\ell,k,1}|
 &\le C_Mr_Q N N^{\ell-1}(\lambda R_\ell)^2
 \sum_{k\ne k_0}|c_{\ell,k}-Q|^{-3}\\
 &\le C_Mr_Q N^\ell\lambda^2R_\ell^2
 \frac{N^3}{R_\ell^3}=C_MN^{\ell+3}\lambda^{\ell+2}
 =C_MN\tau^\ell\tau^2.
\end{align*}
Summing over the coronas gives
\[
 r_QN\sum_{\ell\ge1}\sum_{k\ne k_0}|E_{\ell,k,1}|
 \le C_MN\tau^2\sum_{\ell\ge1}\tau^\ell
 \le C_MN\tau^3.
\]
Combining this with \eqref{eq:A1-main-summed} proves
\eqref{eq:A1} and completes the proof.
\end{proof}

\begin{corollary}\label{cor:D2}
Let $Q\in K_{n-1}$ and let $b_Q,d_Q$ be admissible.
Write
\[
 \Delta_2(j):=\Delta_2(x_{Qj}),\qquad 0\le j<N.
\]
Then
\begin{equation}\label{eq:D2-mode-bounds}
 \|P_1\Delta_2\|_\infty\le CN\tau,
 \qquad
 \|P_{\ge2}\Delta_2\|_\infty\le CN\tau^2.
\end{equation}
Furthermore,
\begin{equation}\label{eq:D2-mean-bound}
 |\overline{\Delta_2(d_Q,b_Q)}|
 \le CN(C\tau)^{N/2}.
\end{equation}
In the unperturbed case $d_Q=0$,
\begin{equation}\label{eq:D2-mean-unperturbed}
 |\overline{\Delta_2(0,b_Q)}|
 \le CN(C\tau)^N.
\end{equation}
In particular,
\begin{equation}\label{eq:D2-mean-d-stability}
 \left|
 \overline{\Delta_2(d_Q,b_Q)}
 -\overline{\Delta_2(0,b_Q)}
 \right|
 \le CN(C\tau)^{N/2}.
\end{equation}
\end{corollary}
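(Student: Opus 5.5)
The plan is to start from the expansion \eqref{eq:D2-expansion},
\[
 \Delta_2(j)=-\Re\sum_{p\ge1}\frac{A_p(Q)}p h_j^p,\qquad h_j=\rho b_Qr_QV_{d_Q}(\zeta_j),
\]
and to combine the coefficient bounds of Lemma~\ref{lem:Ap} (with $M=1$) with the Fourier structure of the powers $h_j^p$. Write $h_j^p=(\rho b_Qr_Q)^p\zeta_j^p(1+d_Q\zeta_j)^p$. Since $|V_{d_Q}|\le 2$, we have $|h_j|\le Cr_Q$, so Lemma~\ref{lem:Ap} gives
\[
 \bigl|A_p(Q)h_j^p\bigr|/p\le (C'C_1)^pN\tau^p\quad (p\ge2),\qquad |A_1(Q)h_j|\le CN\tau .
\]
Because $\tau$ is small, the tail $\sum_{p\ge2}$ is bounded in $\ell^\infty$ by $CN\tau^2$.

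\emph{Mode bounds.} For the first bound we use that $P_1$ is bounded on $\ell^\infty$ (its kernel is $2\Re\zeta^{k}$ averaged, so the norm is at most $2$). Together with $\|\Delta_2\|_\infty\le CN\tau$, this immediately gives $\|P_1\Delta_2\|_\infty\le CN\tau$.

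For $P_{\ge2}$, split $\Delta_2=-\Re(A_1h_j)+T_j$ with $\|T\|_\infty\le CN\tau^2$. The projection $P_{\ge2}=I-\overline{\,\cdot\,}-P_1$ is bounded on $\ell^\infty$ by an absolute constant, so it suffices to treat the $p=1$ term. It equals
\[
 -\rho b_Qr_Q\Re\bigl(A_1(\zeta_j+d_Q\zeta_j^2)\bigr).
\]
The $\zeta_j$ part is pure first mode and is killed by $P_{\ge2}$. The $d_Q\zeta_j^2$ part is a second mode of size $r_Q|A_1||d_Q|\le CN\tau\cdot C_0\tau$. Hence $\|P_{\ge2}\Delta_2\|_\infty\le CN\tau^2$.

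\emph{Mean bounds.} This is the delicate step. Expand
\[
 h_j^p=(\rho b_Qr_Q)^p\sum_{i=0}^p\binom pi d_Q^i\zeta_j^{p+i}.
\]
Averaging over $j$ kills every monomial $\zeta_j^m$ with $N\nmid m$. Since $1\le p\le m=p+i\le 2p$, a nonzero average requires $2p\ge N$, i.e.\ $p\ge N/2$. For such $p$ we bound the terms crudely using $|h_j|\le Cr_Q$ and \eqref{eq:Ap}:
\[
 \sum_{p\ge N/2}\frac{|A_p|}p|h_j|^p\le \sum_{p\ge N/2}N(C\tau)^p\le CN(C\tau)^{N/2},
\]
which is \eqref{eq:D2-mean-bound}. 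The only concern is that the constant $C_M^p$ of Lemma~\ref{lem:Ap} is geometric in $p$, which is exactly what keeps the series summable once $\tau$ is small.

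When $d_Q=0$ we have $h_j^p\propto\zeta_j^p$, whose average vanishes unless $N\mid p$, i.e.\ $p\ge N$. The same tail estimate then gives \eqref{eq:D2-mean-unperturbed}. Finally, \eqref{eq:D2-mean-d-stability} follows from the triangle inequality, since $(C\tau)^N\le(C\tau)^{N/2}$.
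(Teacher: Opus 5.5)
Your proposal is correct and follows essentially the same route as the paper. Both arguments isolate the pure first-mode term $-\Re(B_1\zeta_j)$ from the $p=1$ piece and bound the remainder by $CN\tau^2$ via Lemma~\ref{lem:Ap} and the $\ell^\infty$-boundedness of the projections. The remainder here means the $d_Q\zeta_j^2$ part of the $p=1$ term together with the $p\ge2$ tail. Both then treat the means with the roots-of-unity filter ($p\le p+a\le 2p$, or $N\mid p$ when $d_Q=0$), followed by the geometric tail bound and the triangle inequality.
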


\begin{proof}
Put
\[
 \alpha_Q:=\rho b_Q,\qquad
 B_p:=\frac{A_p(Q)}p(\alpha_Qr_Q)^p.
\]
By Lemma~\ref{lem:Ap},
\begin{equation}\label{eq:D2-Bp-bound}
 |B_p|\le\frac{CN}{p}(C\tau)^p,\qquad p\ge1.
\end{equation}
The expansion \eqref{eq:D2-expansion} becomes
\begin{equation}\label{eq:D2-series-cor}
 \Delta_2(j)
 =-\Re\sum_{p\ge1}B_p(\zeta_j+d_Q\zeta_j^2)^p.
\end{equation}

We first estimate the nonconstant Fourier modes.
Write
\[
 \Delta_2(j)=-\Re(B_1\zeta_j)+\mathcal R(j),
\]
where
\[
 \mathcal R(j)
 =-\Re\left[
 B_1d_Q\zeta_j^2
 +\sum_{p\ge2}B_p(\zeta_j+d_Q\zeta_j^2)^p
 \right].
\]
Since $|d_Q|\le C_0\tau$,
\[
\begin{aligned}
 \|\mathcal R\|_\infty
 &\le |B_1||d_Q|
 +\sum_{p\ge2}|B_p|(1+|d_Q|)^p\le CN\tau^2+CN\sum_{p\ge2}(C\tau)^p\le CN\tau^2.
\end{aligned}
\]
The term $\Re(B_1\zeta_j)$ belongs to the first Fourier mode.
The uniform bounds for the projections $P_1$ and $P_{\ge2}$
now give \eqref{eq:D2-mode-bounds}.

For the mean, expand
\[
 (\zeta_j+d_Q\zeta_j^2)^p
 =\sum_{a=0}^p\binom pa d_Q^a\zeta_j^{p+a}.
\]
Since
\[
 \frac1N\sum_{j=0}^{N-1}\zeta_j^m
 =
 \begin{cases}
 1,&N\mid m,\\
 0,&N\nmid m,
 \end{cases}
\]
we obtain
\begin{equation}\label{eq:D2-mean-filter}
 \overline{\Delta_2(d_Q,b_Q)}
 =-\Re\sum_{p\ge1}B_p
 \sum_{\substack{0\le a\le p\\N\mid(p+a)}}
 \binom pa d_Q^a.
\end{equation}
Because $p\le p+a\le2p$, no term with $p<N/2$
can survive the averaging.

Consequently,
\[
\begin{aligned}
 |\overline{\Delta_2(d_Q,b_Q)}|
 &\le CN\sum_{p\ge\lceil N/2\rceil}
 (C\tau)^p(1+|d_Q|)^p\le CN(C\tau)^{N/2}.
\end{aligned}
\]
This gives \eqref{eq:D2-mean-bound}.

If $d_Q=0$, only $a=0$ remains and $N\mid p$
is necessary. Thus
\[
 |\overline{\Delta_2(0,b_Q)}|
 \le CN\sum_{q\ge1}(C\tau)^{qN}
 \le CN(C\tau)^N.
\]
Finally, \eqref{eq:D2-mean-d-stability} follows from
the triangle inequality.
\end{proof}

\subsection{The parent-replacement term $\Delta_3$}

The last term $\Delta_3$ records the error from replacing
each exterior parent $y$ by its $N$ children.

Unlike $\Delta_2$, this term depends on the new parameters
at all parents $y\in K_{n-1}$. We prove that its entire
supremum norm is exponentially small. Thus no separate
Fourier analysis of $\Delta_3$ will be needed.

Recall the parameter vectors
\[
 \mathbf d=(d_y)_{y\in K_{n-1}},
 \qquad
 \mathbf t=(t_y)_{y\in K_{n-1}}.
\]

\begin{lemma}\label{lem:D3}
Assume
\[
 \|\mathbf d\|_{\ell^\infty(K_{n-1})}\le C_0\tau,
 \qquad
 \|\mathbf t\|_{\ell^\infty(K_{n-1})}\le\varepsilon_0.
\]
Then
\[
 \|\Delta_3\|_\infty\le CN(C\tau)^{N/2},
\]
where
\[
 \|\Delta_3\|_\infty
 :=
 \max_{Q\in K_{n-1}}\max_{0\le j<N}
 |\Delta_3(x_{Qj})|.
\]
The constant is independent of the generation and of the
admissible parameters.
\end{lemma}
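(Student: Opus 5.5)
Fix $x=x_{Qj}$ and an exterior parent $y\in K_{n-1}\setminus\{Q\}$ with children $z_k=y+h_k$, where $h_k=\rho b_y r_y V_{d_y}(\zeta_k)$. Since $r_y=r_Q$, we have $|h_k|\le Cr_Q$. The plan is to expand each inner sum of \eqref{eq:D3} in powers of $h_k/(x-y)$. The same roots-of-unity filter used in Corollary~\ref{cor:D2} should then kill every term of order below $N/2$.

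\emph{Step 1 (pointwise expansion).} By Lemma~\ref{lem:separation} applied at the last common ancestor of $x$ and $y$, together with Lemma~\ref{lem:cylinder}, we have $|x-y|\gtrsim r_Q/\tau$. Hence $|h_k/(x-y)|\le C\tau<1/2$. Then
\[
 \sum_{k}\bigl(\log|z_k-x|-\log|y-x|\bigr)
 =-\Re\sum_{p\ge1}\frac1p\,\frac{1}{(x-y)^p}\sum_{k=0}^{N-1}h_k^p,
\]
and the double series converges absolutely.

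\emph{Step 2 (filter).} Put $\alpha_y=\rho b_y r_y$. Expanding $(\zeta_k+d_y\zeta_k^2)^p=\sum_{a}\binom pa d_y^a\zeta_k^{p+a}$ and averaging over $k$ gives
\[
 \sum_k h_k^p=N\alpha_y^p\sum_{N\mid(p+a)}\binom pa d_y^a .
\]
This vanishes unless $2p\ge N$. Since $|d_y|\le C_0\tau$ and $|\log b_y|\le\varepsilon_0$, the surviving terms are bounded by $N(Cr_Q)^p$ whenever $p\ge N/2$.

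\emph{Step 3 (summation over $y$).} Steps 1 and 2 give
\[
 |\Delta_3(x)|\le N\sum_{p\ge N/2}\frac{(Cr_Q)^p}{p}\sum_{y\ne Q}|x-y|^{-p}.
\]
The inner sum is a version of $A_p$ with absolute values and observation point $x$ instead of $Q$. I would bound it by repeating the corona argument of Lemma~\ref{lem:Ap}, Steps 1--4, with $x$ in place of $Q$. This is legitimate: $|x-Q|\le Cr_Q\le C\lambda R_\ell$, so the separation \eqref{eq:Ap-center-separation} still holds. That argument gives
\[
 r_Q^p\,N\sum_{y\ne Q}|x-y|^{-p}\le C^pN\tau^p
\]
for $p\ge2$. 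Only the absolute root sum \eqref{eq:Ap-root-sums} and the geometric factor $N\lambda^p\le 1/2$ are used here, so no cancellation is needed. Therefore
\[
 |\Delta_3(x)|\le \sum_{p\ge N/2}(C\tau)^p\le CN(C\tau)^{N/2},
\]
uniformly in $n$, in $x$, and in the admissible parameters.

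\emph{Main obstacle.} The delicate point is the uniformity of the constants in $p$ in Step 3. The corona bound must give $C^p$ geometric growth with no worse dependence, and the branch-replacement Taylor step must hold uniformly in $p$. The paper already arranges both in Lemma~\ref{lem:Ap}, so I would quote that proof verbatim with $Q$ replaced by $x$. The required smallness is $C\tau<1/2$ uniformly, which holds for $N$ large.
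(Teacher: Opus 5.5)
Your proof is correct and follows essentially the paper's route: the same three ingredients appear, namely the roots-of-unity moment cancellation $\sum_k h_{y,k}^q=0$ for $q<N/2$, the logarithmic expansion with ratio at most $C\tau$, and a corona-by-corona summation over the exterior parents. The only difference is bookkeeping in the final sum. The paper bounds each $D_y(x)$ by $C\eta_y^{\lceil N/2\rceil}$ using only the crude bounds $|x-y|\ge cR_\ell/N$ and $\#\mathcal Y_\ell(Q)\le N^\ell$; the large exponent absorbs the count via $N\lambda^{\lceil N/2\rceil}\le 1/2$, so neither the refined root sums nor a branch-replacement step is needed. In fact the barycenter/Taylor step of Lemma~\ref{lem:Ap} does not apply to $|x-y|^{-p}$, but you do not need it, since $|x-y|\ge\frac12|x-c_{\ell,k}|$ already suffices.
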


\begin{proof}
Fix $Q\in K_{n-1}$ and one of its children
$x=x_{Qj}\in K_n$.

By definition,
\[
 \Delta_3(x)
 =\sum_{\substack{y\in K_{n-1}\\y\ne Q}}D_y(x),
\]
where
\[
 D_y(x)
 =\sum_{k=0}^{N-1}
 \bigl(\log|y+h_{y,k}-x|-\log|y-x|\bigr)
\]
and
\[
 h_{y,k}
 =\rho b_yr_y(\zeta_k+d_y\zeta_k^2)
 =\rho b_yr_y\zeta_k(1+d_y\zeta_k).
\]
Since $b_y\simeq1$ and $|d_y|\le C_0\tau$,
\begin{equation}\label{eq:D3-h-size}
 \max_k|h_{y,k}|\le Cr_y.
\end{equation}

We first record the cancellation within one packet of children.
For every integer $q\ge1$,
\[
\begin{aligned}
 \sum_{k=0}^{N-1}h_{y,k}^q
 &=(\rho b_yr_y)^q
 \sum_{k=0}^{N-1}\zeta_k^q(1+d_y\zeta_k)^q=(\rho b_yr_y)^q
 \sum_{a=0}^q\binom qa d_y^a
 \sum_{k=0}^{N-1}\zeta_k^{q+a}.
\end{aligned}
\]
If $1\le q<N/2$, then
\[
 1\le q+a\le2q<N.
\]
Therefore every inner roots-of-unity sum vanishes, and
\begin{equation}\label{eq:D3-moment-cancel}
 \sum_{k=0}^{N-1}h_{y,k}^q=0,
 \qquad 1\le q<\frac N2.
\end{equation}

We next show that $h_{y,k}$ is small compared with $|x-y|$.

Let $\mathcal Y_\ell(Q)$ consist of the vertices
$y\in K_{n-1}$ whose branches first separate from that
of $Q$ at its $\ell$-th ancestor. Put
\[
 R_\ell=\lambda^{-\ell}r_Q.
\]
For $y\in\mathcal Y_\ell(Q)$, the separation estimate gives
\[
 |Q-y|\ge c\frac{R_\ell}{N}.
\]
On the other hand,
\[
 |x-Q|\le Cr_Q=C\lambda^\ell R_\ell.
\]
Since $N\lambda^\ell\le\tau\ll1$,
\[
 |x-y|\ge c\frac{R_\ell}{N}.
\]
Also $r_y=r_Q=\lambda^\ell R_\ell$.

Define
\[
 \eta_y:=\frac{\max_k|h_{y,k}|}{|x-y|}.
\]
By \eqref{eq:D3-h-size},
\begin{equation}\label{eq:D3-eta-ring}
 \eta_y\le CN\lambda^\ell
 =C\tau\lambda^{\ell-1},
 \qquad y\in\mathcal Y_\ell(Q).
\end{equation}
In particular, $\eta_y\le1/2$ when $\tau$ is sufficiently small.

Now put $w=x-y$. Since
\[
 y+h_{y,k}-x
 =-w\left(1-\frac{h_{y,k}}w\right),
\]
we may expand
\[
\begin{aligned}
 D_y(x)
 &=\Re\sum_{k=0}^{N-1}
 \log\left(1-\frac{h_{y,k}}w\right)=-\Re\sum_{q\ge1}\frac1{qw^q}
 \sum_{k=0}^{N-1}h_{y,k}^q.
\end{aligned}
\]

Let
\[
 L_N:=\lceil N/2\rceil.
\]
By \eqref{eq:D3-moment-cancel}, all the terms
with $q<L_N$ vanish exactly. Thus
\[
 D_y(x)
 =-\Re\sum_{q\ge L_N}\frac1{qw^q}
 \sum_{k=0}^{N-1}h_{y,k}^q.
\]
Consequently,
\[
\begin{aligned}
 |D_y(x)|
 &\le N\sum_{q\ge L_N}\frac{\eta_y^q}{q}\le\frac{N}{L_N}
 \frac{\eta_y^{L_N}}{1-\eta_y}\le C\eta_y^{L_N}.
\end{aligned}
\]

Finally, each genealogical corona contains at most $N^\ell$ vertices:
\[
 \#\mathcal Y_\ell(Q)
 =(N-1)N^{\ell-1}\le N^\ell.
\]
Combining this with \eqref{eq:D3-eta-ring},
\[
\begin{aligned}
 |\Delta_3(x)|
 &\le C\sum_{\ell\ge1}
 N^\ell(C\tau\lambda^{\ell-1})^{L_N}\\
 &=CN(C\tau)^{L_N}
 \sum_{\ell\ge1}(N\lambda^{L_N})^{\ell-1}.
\end{aligned}
\]
For $N$ sufficiently large,
\[
 N\lambda^{L_N}\le\frac12.
\]
Therefore
\[
 |\Delta_3(x)|
 \le CN(C\tau)^{L_N}
 \le CN(C\tau)^{N/2}.
\]
This completes the proof.
\end{proof}

\subsection{Choosing the target level and correcting the mean}

We now use the three estimates to choose the new parameters.
We first construct a common target level $c_n$, then use $b_Q$
to correct the mean. Throughout this step all new shape
parameters are set to zero.

Set $\mathbf d=\mathbf0$ and $\mathbf t=\mathbf0$.
Define
\[
 H_Q:=g_{n-1}(Q)+N^{-n}\overline F_Q(\mathbf0,\mathbf0).
\]
This is the reference mean associated with $Q$ before
any new correction is introduced.

\begin{lemma}\label{lem:mean-values-comparable}
There is a number $\gamma_n$, depending only on $n$, such that
\[
 |H_Q-c_{n-1}-\gamma_n|
 \le CA N^{-(n-1)},
 \qquad Q\in K_{n-1}.
\]
Consequently,
\[
 \operatorname{osc}_{Q\in K_{n-1}}H_Q
 \le CA N^{-(n-1)}.
\]
\end{lemma}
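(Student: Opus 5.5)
We take $\gamma_n:=N^{-n}\overline{\Delta_1(0,1)}+N^{-n}(N-1)\log(\rho r_Q)$. This quantity depends only on $n$, since $r_Q=\lambda^{n-1}$ for every $Q\in K_{n-1}$ and $\Delta_1(j;0,1)=\log N$ by \eqref{eq:roots}. With all new parameters set to zero we split
\[
 \overline F_Q(\mathbf0,\mathbf0)=(N-1)\log(\rho r_Q)+\overline{\Delta_1(0,1)}+\overline{\Delta_2(0,1)}+\overline{\Delta_3},
\]
where the local term comes from the identity $\Delta_1(x_{Qj})=(N-1)\log(\rho r_Q)+\Delta_1(j;d_Q,b_Q)$ with $d_Q=0$, $b_Q=1$. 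Hence
\[
 H_Q-c_{n-1}-\gamma_n=\bigl(g_{n-1}(Q)-c_{n-1}\bigr)+N^{-n}\bigl(\overline{\Delta_2(0,1)}+\overline{\Delta_3}\bigr),
\]
and it remains to bound the three pieces on the right.

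The first piece is at most $AN^{-(n-1)}$ by the induction hypothesis \eqref{eq:induction-hypothesis}. For the second, \eqref{eq:D2-mean-unperturbed} gives $|\overline{\Delta_2(0,1)}|\le CN(C\tau)^N$; the choice $b_Q=1$ is admissible. For the third, Lemma~\ref{lem:D3} applies with $\mathbf d=\mathbf0$, $\mathbf t=\mathbf0$, which are admissible, and yields $|\overline{\Delta_3}|\le\|\Delta_3\|_\infty\le CN(C\tau)^{N/2}$. Together,
\[
 N^{-n}\bigl|\overline{\Delta_2}+\overline{\Delta_3}\bigr|\le N^{-(n-1)}\,C(C\tau)^{N/2}.
\]

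The only real check is that this exponentially small quantity is dominated by $A=KN\tau^2$, i.e.\ that $C(C\tau)^{N/2}\le N\tau^2$. This holds once $N\ge 6$ and $C\tau\le1$: then $(C\tau)^{N/2}\le (C\tau)^3$, and the extra factor of $\tau$ absorbs the constants because $\tau$ is small. It holds for every $K\ge1$, so we obtain $|H_Q-c_{n-1}-\gamma_n|\le (1+C)AN^{-(n-1)}$.

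The oscillation bound then follows from the triangle inequality between any two parents $Q,Q'$. I expect no genuine obstacle. The one point needing care is that $\gamma_n$ is truly independent of $Q$: the scale $r_Q$ is generation-determined, with no products of $b_R$ entering it, and the unperturbed local mean is the exact constant $\log N$ (or, alternatively, $\overline{\Delta_1(0,1)}$ is the same for all $Q$).
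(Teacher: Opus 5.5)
Your proposal is correct and follows the paper's proof essentially step for step. It uses the same choice $\gamma_n=N^{-n}\bigl[(N-1)\log(\rho\lambda^{n-1})+\log N\bigr]$, the same splitting of $\overline F_Q(\mathbf0,\mathbf0)$, and the same appeal to \eqref{eq:D2-mean-unperturbed}, Lemma~\ref{lem:D3} and the induction hypothesis; your explicit check that $C(C\tau)^{N/2}\le N\tau^2\le A$ just spells out a step the paper leaves implicit.
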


\begin{proof}
At the reference configuration,
\[
 x_{Qj}=Q+\rho r_Q\zeta_j.
\]
Therefore
\[
\begin{aligned}
 \Delta_1(x_{Qj})
 &=\sum_{k\ne j}
 \log|\rho r_Q(\zeta_k-\zeta_j)|\\
 &=(N-1)\log(\rho r_Q)
 +\sum_{k\ne j}\log|\zeta_k-\zeta_j|.
\end{aligned}
\]
The roots-of-unity identity
\[
 \prod_{k\ne j}|\zeta_k-\zeta_j|=N
\]
gives
\[
 \sum_{k\ne j}\log|\zeta_k-\zeta_j|=\log N.
\]
Also $r_Q=\lambda^{n-1}$ for all $Q\in K_{n-1}$.
Hence the entire local term is independent of $Q$ and $j$.

Set
\begin{equation}\label{eq:Gamma-n}
 \Gamma_n
 :=(N-1)\log(\rho\lambda^{n-1})+\log N
\end{equation}
and
\begin{equation}\label{eq:gamma-n}
 \gamma_n:=N^{-n}\Gamma_n.
\end{equation}
By Corollary~\ref{cor:D2} and Lemma~\ref{lem:D3},
\[
 \left|
 \overline{\Delta_{2,Q}}(\mathbf0,\mathbf0)
 +\overline{\Delta_{3,Q}}(\mathbf0,\mathbf0)
 \right|
 \le CN(C\tau)^{N/2}.
\]
Consequently,
\[
 H_Q
 =g_{n-1}(Q)+\gamma_n
 +O\left(N^{-n}N(C\tau)^{N/2}\right).
\]
Using \eqref{eq:induction-hypothesis},
\[
\begin{aligned}
 |H_Q-c_{n-1}-\gamma_n|
 &\le AN^{-(n-1)}
 +CN^{-n}N(C\tau)^{N/2}\le CA N^{-(n-1)}.
\end{aligned}
\]
Taking the difference for two parents gives
the oscillation estimate.
\end{proof}

We now choose the target level for generation $n$ by
\begin{equation}\label{eq:cn-choice}
 c_n:=c_{n-1}+\gamma_n.
\end{equation}
With this choice, the preceding lemma immediately gives
\begin{equation}\label{eq:HQ-cn}
 |H_Q-c_n|\le CA N^{-(n-1)},
 \qquad Q\in K_{n-1}.
\end{equation}

\paragraph{Choosing the scalar parameters.}

Keep $\mathbf d=\mathbf0$.
Since a common dilation by $b_Q=e^{t_Q}$ changes each of the
$N-1$ local logarithmic distances by $t_Q$,
\[
 \overline{\Delta_{1,Q}}(0,t_Q)
 =\overline{\Delta_{1,Q}}(0,0)+(N-1)t_Q.
\]
We therefore define
\begin{equation}\label{eq:tQ-explicit}
 t_Q:=\frac{N^n(c_n-H_Q)}{N-1},
 \qquad b_Q:=e^{t_Q}.
\end{equation}
By \eqref{eq:HQ-cn},
\[
 |t_Q|\le CA.
\]
In particular, if $N\tau^2$ is sufficiently small,
then $|t_Q|\le\varepsilon_0$. Moreover,
\[
 H_Q-c_n+N^{-n}(N-1)t_Q=0.
\]
Thus the local scalar correction removes the reference-mean
error exactly. The only remaining error comes from changes
in $\Delta_2$ and $\Delta_3$.

\begin{lemma}\label{lem:scalar-correction}
For the scalar parameters in \eqref{eq:tQ-explicit}
and $\mathbf d=\mathbf0$,
\[
 \left|
 g_{n-1}(Q)+N^{-n}\overline F_Q(\mathbf0,\mathbf t)-c_n
 \right|
 \le CN^{-n}N(C\tau)^{N/2},
 \qquad Q\in K_{n-1}.
\]
\end{lemma}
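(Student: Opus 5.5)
The plan is to decompose $\overline F_Q(\mathbf0,\mathbf t)$ into its three averaged pieces and compare each with its value at the reference parameters $(\mathbf0,\mathbf0)$. By definition of $H_Q$ and the choice \eqref{eq:tQ-explicit},
\[
 g_{n-1}(Q)+N^{-n}\overline F_Q(\mathbf0,\mathbf t)-c_n
 =N^{-n}\Bigl[\overline F_Q(\mathbf0,\mathbf t)-\overline F_Q(\mathbf0,\mathbf0)-(N-1)t_Q\Bigr],
\]
since $H_Q-c_n+N^{-n}(N-1)t_Q=0$. So it suffices to show that the bracket is $O(N(C\tau)^{N/2})$.

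For the local term, \eqref{eq:D1-b-scaling} of Lemma~\ref{lem:D1-response} gives exactly $\overline{\Delta_{1,Q}}(0,t_Q)-\overline{\Delta_{1,Q}}(0,0)=(N-1)t_Q$, and this cancels the last term of the bracket identically. The bracket therefore reduces to the differences of the means of $\Delta_2$ and of $\Delta_3$ between $\mathbf t$ and $\mathbf0$.

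For $\Delta_2$, Corollary~\ref{cor:D2} applies with $d_Q=0$ and any admissible $b_Q$. This requires $|t_Q|\le\varepsilon_0$, which was verified from $|t_Q|\le CA$ and $N\tau^2$ small. Then \eqref{eq:D2-mean-unperturbed} bounds both $|\overline{\Delta_2(0,b_Q)}|$ and $|\overline{\Delta_2(0,1)}|$ by $CN(C\tau)^N$, so their difference is at most $CN(C\tau)^{N/2}$. One could also note that the coefficients $A_p(Q)$ do not depend on $b_Q$, and that $b_Q$ only rescales the $B_p$ by $b_Q^p$; but the crude triangle inequality suffices. For $\Delta_3$, Lemma~\ref{lem:D3} gives the sup bound $CN(C\tau)^{N/2}$ uniformly over admissible parameter vectors. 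Applying it once at $(\mathbf0,\mathbf t)$ and once at $(\mathbf0,\mathbf0)$, and using the triangle inequality, bounds the difference of means by $2CN(C\tau)^{N/2}$.

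Adding the three contributions and multiplying by $N^{-n}$ gives the claim. The only point requiring care is admissibility of the chosen vector $\mathbf t$ for all parents simultaneously, since $\Delta_3$ at $Q$ involves the $t_y$ of every other parent. This admissibility holds uniformly because $|t_y|\le CA\le\varepsilon_0$ for every $y\in K_{n-1}$, and because the bound in Lemma~\ref{lem:D3} is independent of the particular admissible parameters. I do not expect any genuine obstacle: the lemma is bookkeeping on top of the exponential mean bounds already established.
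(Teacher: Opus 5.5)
Your proposal is correct and follows essentially the same route as the paper's proof. The exact $(N-1)t_Q$ cancellation from \eqref{eq:D1-b-scaling} reduces the error to the differences of the means of $\Delta_2$ and $\Delta_3$, which you bound by the triangle inequality using \eqref{eq:D2-mean-unperturbed} and Lemma~\ref{lem:D3}. Your explicit check that the whole vector $\mathbf t$ is admissible, which $\Delta_3$ requires, is a point the paper leaves implicit.
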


\begin{proof}
Subtract the reference mean and use the exact cancellation:
\[
\begin{aligned}
 &g_{n-1}(Q)+N^{-n}\overline F_Q(\mathbf0,\mathbf t)-c_n=N^{-n}\Bigl[
 \overline{\Delta_{2,Q}}(\mathbf0,\mathbf t)
 -\overline{\Delta_{2,Q}}(\mathbf0,\mathbf0)+
 \overline{\Delta_{3,Q}}(\mathbf0,\mathbf t)
 -\overline{\Delta_{3,Q}}(\mathbf0,\mathbf0)
 \Bigr].
\end{aligned}
\]
By Corollary~\ref{cor:D2}, with $d_Q=0$,
\[
 |\overline{\Delta_{2,Q}}(\mathbf0,\mathbf t)|
 +|\overline{\Delta_{2,Q}}(\mathbf0,\mathbf0)|
 \le CN(C\tau)^N.
\]
By Lemma~\ref{lem:D3},
\[
 |\overline{\Delta_{3,Q}}(\mathbf0,\mathbf t)|
 +|\overline{\Delta_{3,Q}}(\mathbf0,\mathbf0)|
 \le CN(C\tau)^{N/2}.
\]
Combining these estimates proves the result.
\end{proof}

\subsection{Correcting the first Fourier mode}

We now keep all scalar parameters $\mathbf t$ fixed and
start from $\mathbf d=\mathbf0$.

Since the unperturbed local interaction is constant in $j$,
the first Fourier mode of $F_Q$ comes only from the exterior
terms. Write
\begin{equation}\label{eq:first-mode-BQ}
 P_1F_Q(\mathbf0,\mathbf t;j)
 =\Re(B_Q\zeta_j)
\end{equation}
for a complex number $B_Q$. By Corollary~\ref{cor:D2} and Lemma~\ref{lem:D3},
\[
 |B_Q|\le CN\tau.
\]
Lemma~\ref{lem:D1-response} shows that introducing $d_Q$
produces the first-mode response
\[
 (N-2)\Re(d_Q\zeta_j)
\]
in $\Delta_1$.

We therefore choose
\begin{equation}\label{eq:dQ-explicit}
 d_Q:=-\frac{B_Q}{N-2},
\end{equation}
so that
\begin{equation}\label{eq:dQ-size-proof}
 |d_Q|\le C\tau.
\end{equation}

\begin{lemma}\label{lem:first-correction}
For the choices \eqref{eq:tQ-explicit}
and \eqref{eq:dQ-explicit},
\begin{equation}\label{eq:first-mode-residual}
 \|P_1F_Q(\mathbf d,\mathbf t)\|_\infty
 \le CN\tau^2,
\end{equation}
and
\begin{equation}\label{eq:oscF}
 \|F_Q(\mathbf d,\mathbf t)
 -\overline F_Q(\mathbf d,\mathbf t)\|_\infty
 \le CN\tau^2.
\end{equation}
Both estimates are uniform in $Q$ and the generation.
\end{lemma}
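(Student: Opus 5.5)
The plan is to decompose $F_Q(\mathbf d,\mathbf t)$ into the three terms and compare each with its value at $(\mathbf 0,\mathbf t)$. The only term whose first mode depends on the new parameter $d_Q$ at the leading order is $\Delta_1$.

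\emph{Local term.} By Lemma~\ref{lem:D1-response}, \eqref{eq:D1-b-scaling} shows that $b_Q$ affects only the mean, so
\[
 P_1\Delta_1(d_Q,b_Q)=(N-2)\Re(d_Q\zeta_j)+O(N|d_Q|^2).
\]
Moreover $P_{\ge2}\Delta_1(d_Q,b_Q)=O(N|d_Q|^2)$. With $|d_Q|\le C\tau$ from \eqref{eq:dQ-size-proof}, both errors are $O(N\tau^2)$. Here $C$ is an absolute constant produced by the bound $|B_Q|\le CN\tau$. We then take $C_0$ in \eqref{eq:admissible} to be at least this constant, so that the new $\mathbf d$ is admissible and all earlier lemmas apply with it.

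\emph{Exterior term.} I would use the series \eqref{eq:D2-series-cor} with $d_Q$ and with $d_Q=0$. Subtracting gives
\[
 \Delta_2(d_Q)-\Delta_2(0)=-\Re\sum_{p\ge1}B_p\bigl[(\zeta_j+d_Q\zeta_j^2)^p-\zeta_j^p\bigr],
\]
whose sup norm is at most
\[
 \sum_p|B_p|\bigl((1+|d_Q|)^p-1\bigr)\le CN\tau\cdot\tau.
\]
Here $|B_p|\le CN(C\tau)^p/p$ by \eqref{eq:D2-Bp-bound}, and $(1+|d_Q|)^p-1\le p|d_Q|(1+|d_Q|)^{p-1}$. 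Hence the difference is $O(N\tau^2)$ in $\ell^\infty$, and so are all its Fourier projections, because $P_1$ and $P_{\mathrm{nc}}$ are bounded on $\ell^\infty$ (with $\|P_1\|\le 4$, $\|P_{\mathrm{nc}}\|\le2$).

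\emph{Parent-replacement term.} Lemma~\ref{lem:D3} applies both at $(\mathbf0,\mathbf t)$ and at $(\mathbf d,\mathbf t)$, so $\Delta_3$ is $O(N(C\tau)^{N/2})=O(N\tau^2)$ in sup norm, since $N\ge4$ and $\tau$ is small.

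\emph{Assembling.} Write
\[
 P_1F_Q(\mathbf d,\mathbf t)=P_1F_Q(\mathbf0,\mathbf t)+(N-2)\Re(d_Q\zeta_j)+E,\qquad \|E\|_\infty\le CN\tau^2.
\]
By \eqref{eq:first-mode-BQ} and \eqref{eq:dQ-explicit}, the first two terms cancel exactly, which proves \eqref{eq:first-mode-residual}.

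For \eqref{eq:oscF}, write
\[
 F-\overline F=P_1F+P_{\ge2}F.
\]
At $(\mathbf0,\mathbf t)$ the local term is constant in $j$. Then $P_{\ge2}$ of the sum is bounded by $\|P_{\ge2}\Delta_2(0)\|_\infty\le CN\tau^2$ (Corollary~\ref{cor:D2}) plus the $O(N\tau^2)$ contributions of $\Delta_3$. Passing from $(\mathbf 0,\mathbf t)$ to $(\mathbf d,\mathbf t)$ adds only the $O(N\tau^2)$ errors above. For the $\ell^\infty$ boundedness of $P_{\ge2}=P_{\mathrm{nc}}-P_1$ we use the trivial bound $\|P_1f\|_\infty\le2\|f\|_\infty$.

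The main point of care is that $\Delta_3$ at $Q$ depends on $d_y$ for all the other parents $y$, so its first mode is not controlled by $B_Q$ alone. This is harmless because Lemma~\ref{lem:D3} bounds $\Delta_3$ uniformly over all admissible parameter vectors. The other delicate point is that $B_Q$ was defined at $(\mathbf0,\mathbf t)$, including the $\Delta_3$ contribution there. The mismatch at $(\mathbf d,\mathbf t)$ is again exponentially small, hence absorbed into $CN\tau^2$.
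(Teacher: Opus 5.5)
Your proposal is correct and follows the paper's proof essentially step by step. The first-mode response $(N-2)\Re(d_Q\zeta_j)$ of $\Delta_1$ cancels $\Re(B_Q\zeta_j)$ exactly, the change in $\Delta_2$ is bounded in $\ell^\infty$ by $CN\tau|d_Q|$, and $\Delta_3$ is handled by Lemma~\ref{lem:D3} before and after the change. The only difference is cosmetic: you bound $P_{\ge2}\Delta_2$ at $d_Q=0$ and add the difference, whereas the paper applies Corollary~\ref{cor:D2} directly at the admissible $d_Q$.

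Your remark about choosing $C_0$ addresses a point the paper leaves implicit. Strictly speaking, the constant in $|B_Q|\le CN\tau$ depends on $C_0$ through Lemma~\ref{lem:Ap}, but only in terms that carry an extra factor of $\tau$, so the choice is consistent once $\tau$ is small.
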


\begin{proof}
We separate the principal cancellation from the errors
created by the shape correction. By Lemma~\ref{lem:D1-response},
\begin{equation}\label{eq:first-correction-D1}
 P_1\Delta_1(d_Q,b_Q)(j)
 =(N-2)\Re(d_Q\zeta_j)+R_{1,Q}(j),
 \qquad
 \|R_{1,Q}\|_\infty\le CN|d_Q|^2\le CN\tau^2.
\end{equation}
By our choice of $d_Q$,
\[
 (N-2)\Re(d_Q\zeta_j)
 =-\Re(B_Q\zeta_j)
 =-P_1F_Q(\mathbf0,\mathbf t;j).
\]
Thus the leading first-mode response of $\Delta_1$
cancels the entire first Fourier mode of the reference
configuration.

We must check that changing the shape parameters does
not produce another first-mode contribution of order $N\tau$. For $\Delta_2$, the coefficients $A_p(Q)$ are independent
of the new shape parameters. Put $a_Q=\rho b_Q$. Then
\[
\begin{aligned}
 &\Delta_{2,Q}(d_Q,b_Q;j)-\Delta_{2,Q}(0,b_Q;j)=-\Re\sum_{p\ge1}
 \frac{A_p(Q)}p(a_Qr_Q)^p
 \left[
 (\zeta_j+d_Q\zeta_j^2)^p-\zeta_j^p
 \right].
\end{aligned}
\]
We use the elementary estimate
\[
 |(\zeta+d\zeta^2)^p-\zeta^p|
 \le p|d|(1+|d|)^{p-1},
 \qquad |\zeta|=1.
\]
By Lemma~\ref{lem:Ap} and $|d_Q|\le C\tau$,
\[
\begin{aligned}
 &\|\Delta_{2,Q}(d_Q,b_Q)
 -\Delta_{2,Q}(0,b_Q)\|_\infty\le CN|d_Q|
 \sum_{p\ge1}(C\tau)^p \le CN\tau|d_Q|
 \le CN\tau^2.
\end{aligned}
\]
No Fourier expansion of this difference is needed:
an $\ell^\infty$ bound already controls its first mode.

For $\Delta_3$, we use the uniform estimate of
Lemma~\ref{lem:D3} before and after changing all shape
parameters. Since $P_1$ is bounded on discrete $\ell^\infty$,
\begin{equation}\label{eq:first-correction-D3}
 \left\|
 P_1\bigl[
 \Delta_{3,Q}(\mathbf d,\mathbf t)
 -\Delta_{3,Q}(\mathbf0,\mathbf t)
 \bigr]
 \right\|_\infty
 \le CN(C\tau)^{N/2}.
\end{equation}
Combining the preceding estimates with the exact
first-mode cancellation gives
\[
 \|P_1F_Q(\mathbf d,\mathbf t)\|_\infty
 \le CN\tau^2+CN(C\tau)^{N/2}
 \le CN\tau^2.
\]
This proves \eqref{eq:first-mode-residual}.

For the remaining nonconstant modes, the preceding
lemmas give
\[
\begin{aligned}
 \|P_{\ge2}\Delta_1\|_\infty
 &\le CN|d_Q|^2\le CN\tau^2,\\
 \|P_{\ge2}\Delta_2\|_\infty
 &\le CN\tau^2,\\
 \|P_{\ge2}\Delta_3\|_\infty
 &\le CN(C\tau)^{N/2}.
\end{aligned}
\]
Since
\[
 F_Q-\overline F_Q=P_1F_Q+P_{\ge2}F_Q,
\]
we obtain \eqref{eq:oscF}.
\end{proof}

\subsection{Mean stability and closing the induction}

\paragraph{Stability of the mean.}

The scalar correction was chosen while $\mathbf d=\mathbf0$.
We must therefore check that the subsequent choice of
$\mathbf d$ does not undo the correction of the mean.

\begin{lemma}\label{lem:mean-stability-after-d}
For the admissible parameters chosen above,
\[
 \left|
 \overline F_Q(\mathbf d,\mathbf t)
 -\overline F_Q(\mathbf0,\mathbf t)
 \right|
 \le CN(C\tau)^{N/2}.
\]
\end{lemma}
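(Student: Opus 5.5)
We split $\overline F_Q=\overline{\Delta_1}+\overline{\Delta_2}+\overline{\Delta_3}$ and bound the change of each average separately when $\mathbf d$ goes from $\mathbf0$ to the chosen vector. The scalar parameters $\mathbf t$ stay fixed throughout. By \eqref{eq:tQ-explicit} and \eqref{eq:dQ-size-proof}, both configurations are admissible: $|t_Q|\le\varepsilon_0$ and $|d_Q|\le C\tau\le C_0\tau$.

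\emph{Local term.} We use the identity $\Delta_1(x_{Qj})=(N-1)\log(\rho r_Q)+\Delta_1(j;d_Q,b_Q)$ together with the scaling identity \eqref{eq:D1-b-scaling}. Together they show that
\[
\overline{\Delta_1(d_Q,b_Q)}-\overline{\Delta_1(0,b_Q)}=\overline{\Delta_1(d_Q)}-\overline{\Delta_1(0)}.
\]
By \eqref{eq:D1-mean-d}, which uses the exact cancellation of all moments of order $m<N$, this is at most $CN(C|d_Q|)^N\le CN(C\tau)^N$. Here we need $|d_Q|\le d_0$, which holds since $\tau$ is small.

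\emph{Exterior term.} The coefficients $A_p(Q)$ depend only on $K_{n-1}$, so they are unchanged when $\mathbf d$ varies. We apply \eqref{eq:D2-mean-d-stability} with $b_Q=e^{t_Q}$, which gives
\[
|\overline{\Delta_2(d_Q,b_Q)}-\overline{\Delta_2(0,b_Q)}|\le CN(C\tau)^{N/2}.
\]
The underlying reason is the filter \eqref{eq:D2-mean-filter}: only indices $p\ge N/2$ survive the averaging.

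\emph{Parent-replacement term.} This is the one place where the parameters of other parents enter. We do not compare the two configurations directly. Instead, we apply Lemma~\ref{lem:D3} separately to $(\mathbf d,\mathbf t)$ and to $(\mathbf0,\mathbf t)$, since both satisfy its hypotheses. Averaging over $j$ and using the triangle inequality bounds the difference of the means by $2CN(C\tau)^{N/2}$.

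Summing the three bounds, and absorbing $(C\tau)^N\le(C\tau)^{N/2}$, gives the claim. No step is a genuine obstacle, since everything reduces to previously proven lemmas. The only point to check is that the chosen $\mathbf d$ and $\mathbf t$ satisfy the admissibility constants required by Lemmas~\ref{lem:D1-response} and~\ref{lem:D3} and Corollary~\ref{cor:D2} uniformly in $Q$ and $n$. This follows from \eqref{eq:dQ-size-proof}, from $|t_Q|\le CA=CKN\tau^2\le\varepsilon_0$, and from taking $N$ large.
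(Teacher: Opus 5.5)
Your proposal is correct and follows essentially the same route as the paper. You split $\overline F_Q$ into the three means, bound the $\Delta_1$ change by \eqref{eq:D1-mean-d} and the $\Delta_2$ change by \eqref{eq:D2-mean-d-stability}, and apply Lemma~\ref{lem:D3} to both configurations with the triangle inequality for $\Delta_3$. Two details the paper leaves implicit are spelled out in your version: that the $b_Q$-dependence cancels in the $\Delta_1$ difference via \eqref{eq:D1-b-scaling}, and that both configurations are admissible.
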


\begin{proof}
We compare the three terms in $F_Q$ separately. Lemma~\ref{lem:D1-response} gives
\[
 \left|
 \overline{\Delta_{1,Q}}(d_Q,b_Q)
 -\overline{\Delta_{1,Q}}(0,b_Q)
 \right|
 \le CN(C\tau)^N.
\]
Corollary~\ref{cor:D2} gives
\[
 \left|
 \overline{\Delta_{2,Q}}(d_Q,b_Q)
 -\overline{\Delta_{2,Q}}(0,b_Q)
 \right|
 \le CN(C\tau)^{N/2}.
\]
Finally, Lemma~\ref{lem:D3} gives
\[
\begin{aligned}
 &\left|
 \overline{\Delta_{3,Q}}(\mathbf d,\mathbf t)
 -\overline{\Delta_{3,Q}}(\mathbf0,\mathbf t)
 \right|\\
 &\quad\le
 \|\Delta_{3,Q}(\mathbf d,\mathbf t)\|_\infty
 +\|\Delta_{3,Q}(\mathbf0,\mathbf t)\|_\infty\\
 &\quad\le CN(C\tau)^{N/2}.
\end{aligned}
\]
Combining these three estimates proves the lemma.
\end{proof}

\paragraph{Closing the induction.}

We have now obtained the two estimates needed at
generation $n$: the mean is close to $c_n$ and the
oscillation around the mean is small.

\begin{proposition}[Inductive step]\label{prop:induction}
There exist absolute constants $K,C>0$ such that,
if
\[
 A=KN\tau^2
\]
and $N\tau^2$ is sufficiently small, then the
inductive hypothesis \eqref{eq:induction-hypothesis}
implies that the parameters chosen in
\eqref{eq:tQ-explicit} and \eqref{eq:dQ-explicit}
satisfy
\[
 |\log b_Q|\le CA,\qquad |d_Q|\le C\tau,
\]
and
\[
 |g_n(x)-c_n|\le AN^{-n},
 \qquad x\in K_n.
\]
\end{proposition}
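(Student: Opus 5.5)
The plan is to assemble the lemmas of this section, with one extra step to make the constants consistent. The choices \eqref{eq:tQ-explicit} and \eqref{eq:dQ-explicit} rely on the admissibility bounds $|t_Q|\le\varepsilon_0$ and $|d_Q|\le C_0\tau$ that the lemmas assume. So I would first fix the constant $C_0$ in \eqref{eq:admissible}. The bound $|B_Q|\le CN\tau$ comes from Corollary~\ref{cor:D2} and Lemma~\ref{lem:D3}, used only at $\mathbf d=\mathbf 0$ and with the constants of Lemma~\ref{lem:Ap} for $M=1$. Its constant does not depend on $C_0$ for the current generation, because the $A_p(Q)$ involve only earlier parameters. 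I would therefore set $C_0$ to be twice the resulting constant in \eqref{eq:dQ-size-proof}.

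The delicate point is circularity: Lemma~\ref{lem:Ap} assumes $|d_{P}|\le B_1\tau$ at earlier generations. I would take $B_1=C_0$ and check that the constant of Lemma~\ref{lem:Ap} does not grow with $B_1$ once $\tau$ is small. This holds because $B_1$ enters only through factors of the form $1+C_M\tau$, which can be absorbed. This gives $|d_Q|\le C\tau\le C_0\tau$ with absolute constants.

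For $t_Q$, the estimate \eqref{eq:HQ-cn} gives $|t_Q|\le N^n\cdot CAN^{-(n-1)}/(N-1)\le C'A$. Then $|\log b_Q|\le CA=CKN\tau^2$, which is at most $\varepsilon_0$ provided $N\tau^2\le\varepsilon_0/(CK)$. This is where the order of choices matters: $K$ is fixed first, and then the smallness of $N\tau^2$ is required. Both parameter bounds hold with constants independent of $n$. Having them, I would go back over Lemmas~\ref{lem:scalar-correction}, \ref{lem:first-correction} and \ref{lem:mean-stability-after-d}, all of which now apply.

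For the main estimate I would write, for $x=x_{Qj}$,
\[
 g_n(x)-c_n=\bigl[g_{n-1}(Q)+N^{-n}\overline F_Q(\mathbf d,\mathbf t)-c_n\bigr]+N^{-n}\bigl[F_Q(\mathbf d,\mathbf t;j)-\overline F_Q(\mathbf d,\mathbf t)\bigr].
\]
For the first bracket I would combine Lemma~\ref{lem:scalar-correction} with Lemma~\ref{lem:mean-stability-after-d}, which bounds it by $CN^{-n}N(C\tau)^{N/2}$. The second bracket is at most $N^{-n}C_1N\tau^2$ by \eqref{eq:oscF}, and here $C_1$ must be absolute, in particular independent of $K$. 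This independence is the main point to check. The constants in Lemma~\ref{lem:first-correction} come from $|d_Q|\le C_0\tau$ and from Lemma~\ref{lem:Ap}, neither of which involves $A$. The only place $A$ enters is the size of $t_Q$, and there only through $b_Q\in[e^{-\varepsilon_0},e^{\varepsilon_0}]$. Hence
\[
|g_n(x)-c_n|\le N^{-n}\bigl(C_1N\tau^2+CN(C\tau)^{N/2}\bigr)\le 2C_1N\tau^2N^{-n},
\]
since $(C\tau)^{N/2}\le\tau^2$ once $N\ge 6$ and $\tau$ is small. Choosing $K=2C_1$ then gives $|g_n(x)-c_n|\le AN^{-n}$.

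The loss $AN^{-(n-1)}$ from the induction hypothesis therefore does not propagate: it is absorbed entirely by $t_Q$, which is why $A$ never feeds back into the final bound. I expect the main obstacle to be confirming this $K$-independence of $C_1$ carefully, together with the compatibility of $C_0$ with $B_1$ in Lemma~\ref{lem:Ap} across generations.
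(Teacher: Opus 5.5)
Your proposal is correct and follows the paper's proof. The parameter bounds come from \eqref{eq:HQ-cn}, \eqref{eq:tQ-explicit} and \eqref{eq:dQ-size-proof}, the same mean/oscillation splitting is controlled by Lemmas~\ref{lem:scalar-correction}, \ref{lem:mean-stability-after-d} and \eqref{eq:oscF}, and the argument closes by taking $K\ge C_1$ and $N\tau^2$ small enough that $CA\le\varepsilon_0$. Your extra bookkeeping — the compatibility of $C_0$ with $B_1$ in Lemma~\ref{lem:Ap} and the independence of $C_1$ from $K$ — makes explicit what the paper leaves implicit; the paper also records the trivial base case $K_0=\{0\}$, $c_0=0$ in its proof.
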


\begin{proof}
The parameter bounds follow from
\eqref{eq:HQ-cn}, \eqref{eq:tQ-explicit},
and \eqref{eq:dQ-size-proof}. 

Combining Lemma~\ref{lem:scalar-correction}
with Lemma~\ref{lem:mean-stability-after-d}, we obtain
\[
 \left|
 g_{n-1}(Q)+N^{-n}\overline F_Q(\mathbf d,\mathbf t)-c_n
 \right|
 \le CN^{-n}N(C\tau)^{N/2}.
\]

For $x=x_{Qj}$, write
\[
\begin{aligned}
 g_n(x)-c_n
 ={}&
 \bigl[
 g_{n-1}(Q)+N^{-n}\overline F_Q-c_n
 \bigr]\\
 &+N^{-n}(F_Q(j)-\overline F_Q).
\end{aligned}
\]
The first bracket is controlled by the mean estimate,
and the second term by \eqref{eq:oscF}. Consequently,
\[
\begin{aligned}
 |g_n(x)-c_n|
 &\le CN^{-n}N
 \left[(C\tau)^{N/2}+\tau^2\right]\\
 &\le C_1N^{-n}N\tau^2.
\end{aligned}
\]
Choose $K\ge C_1$ in $A=KN\tau^2$, and take
$N\tau^2$ sufficiently small that $CA\le\varepsilon_0$.
This closes the induction.

For the initial step, take $K_0=\{0\}$ and $c_0=0$.
Then $g_0(0)=0$ and there are no exterior families.
Thus the induction holds for every generation.
\end{proof}

From the above, it follows that
\[
 \sup_{x\in K_n}|g_n(x)-c_n|\le AN^{-n}
\]
for every generation.

Finally, we verify that the target levels $c_n$ converge.
By our choice \eqref{eq:cn-choice},
\[
 c_n-c_{n-1}=\gamma_n
\]
exactly. Moreover, \eqref{eq:gamma-n} gives
\[
 |\gamma_n|
 \le C\bigl(1+n|\log\lambda|+\log N\bigr)N^{-(n-1)}.
\]
This bound is summable in $n$, with $N$ and $\lambda$ fixed.
Hence
\[
 \sum_{n\ge1}|c_n-c_{n-1}|<\infty,
\]
and therefore $c_n$ converges to some
$c_\infty\in\mathbb R$.


\section{Polynomial corrections of arbitrary order and the full range}
\label{sec:general-polynomial-correction}

Fix $0<s<1$ and an integer $M\geq1$.
Throughout this section, $s$ and $M$ are fixed, and $N$
will be taken sufficiently large.

We retain the recursive construction and the notation of
Section~\ref{subsec:first-order-parameters}, but replace
the quadratic deformation by a polynomial deformation
of order $M$. At every parent $Q$, we use
\[
 V_Q(\zeta)
 =\zeta+\sum_{r=1}^M d_{Q,r}\zeta^{r+1},
 \qquad |d_{Q,r}|\le C_M\tau^r.
\]
The coefficients $d_{Q,r}$ will be chosen inductively.
We assign weight $\operatorname{wt}(d_{Q,r})=r$
and extend this additively to monomials.

Unlike the first-order construction, we do not assume
$N\tau^2\le\varepsilon_0$.
The geometric separation estimates require only that
$\tau$ and the polynomial deformations be sufficiently small.
Moreover, Lemma~\ref{lem:Ap}
remains valid in the present range $0<s<1$:
its proof only requires $\tau=N^{1-1/s}$ to be
sufficiently small and does not rely on
$N\tau^2\ll1$.
\subsection{The local term}

Put
\begin{equation}\label{eq:Sr-definition}
 S_r(z,w)=\frac{z^{r+1}-w^{r+1}}{z-w}
 =\sum_{\nu=0}^r z^{r-\nu}w^\nu.
\end{equation}
Then
\[
 V_Q(z)-V_Q(w)=(z-w)\left(1+\sum_{r=1}^M d_{Q,r}S_r(z,w)\right).
\]

\begin{lemma}\label{lem:D1-general}
Assume that
\[
    |d_{Q,r}|\leq C_M\tau^r,
    \qquad 1\leq r\leq M.
\]
Then, for every $1\leq k\leq M$,
\[
\begin{split}
    P_k\Delta_1
    &=
    \Re\left[
       \left(
          (N-k-1)d_{Q,k}
          +
          \mathcal P_{N,k}
             (d_{Q,1},\ldots,d_{Q,k-1})
       \right)\zeta_j^k
    \right]
    +
    O_M\left(
       N(C_M\tau)^{N-k}
    \right),
\end{split}
\]
where $\mathcal P_{N,k}$ is a polynomial whose coefficients
depend only on $N$ and $k$, and every monomial of
$\mathcal P_{N,k}$ has weighted degree exactly $k$.
Moreover,
\begin{equation}\label{eq:PNk-size}
    \left|
       \mathcal P_{N,k}
       (d_{Q,1},\ldots,d_{Q,k-1})
    \right|
    \leq C_MN\tau^k.
\end{equation}

Furthermore,
\[
    \|P_{\geq M+1}\Delta_1\|_\infty
    \leq C_MN\tau^{M+1},
\]
and
\[
    \left|
       \overline{\Delta_1(\mathbf d_Q)}
       -
       \overline{\Delta_1(\mathbf0)}
    \right|
    \leq C_MN(C_M\tau)^N.
\]
All constants are independent of $N$, the generation,
and the parent $Q$.
\end{lemma}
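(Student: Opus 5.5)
We follow the proof of Lemma~\ref{lem:D1-response}, with the quadratic factor replaced by the divided-difference factor. Write
\[
 E(z,w)=\sum_{r=1}^M d_{Q,r}S_r(z,w),\qquad
 \Delta_1(j;\mathbf d_Q)-\Delta_1(j;\mathbf 0)
 =\Re\sum_{k\ne j}\log\bigl(1+E(\zeta_j,\zeta_k)\bigr).
\]
Since $|S_r|\le r+1$ on the unit circle, we have $|E|\le C_M\tau\le1/2$. We then expand
\[
 \log(1+E)=\sum_{m\ge1}\frac{(-1)^{m+1}}{m}E^m .
\]
After multiplying out, every term of $E^m$ is a monomial $d_{Q,r_1}\cdots d_{Q,r_m}$ of weighted degree $w=r_1+\cdots+r_m$. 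It multiplies a homogeneous polynomial in $(\zeta_j,\zeta_k)$ of total degree $w$. So the whole difference reorganizes as
\[
 \Re\sum_{w\ge1}\ \sum_{\text{monomials of weight }w} c_{\mathbf r}\, \mathbf d^{\mathbf r}\sum_{k\ne j}H_{\mathbf r}(\zeta_j,\zeta_k),
\]
with $H_{\mathbf r}$ homogeneous of degree $w$. The number of monomials of weight $w$ grows at most like $C_M^w$, and $|H_{\mathbf r}|\le C_M^w$ on the circle. Hence the weight-$w$ block is bounded in $\ell^\infty$ by $N(C_M\tau)^w$, and the series converges absolutely.

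\textbf{Fourier support of each block.} This is the key step. For a monomial $\zeta_j^a\zeta_k^b$ with $a+b=w$ we use
\[
 \sum_{k\ne j}\zeta_k^b=N\mathbf 1_{N\mid b}-\zeta_j^b,
\]
so that
\[
 \sum_{k\ne j}\zeta_j^a\zeta_k^b=N\mathbf 1_{N\mid b}\,\zeta_j^a-\zeta_j^w .
\]
If $w<N$, then $N\mid b$ forces $b=0$, and the block is $c\,\zeta_j^w$ for an explicit constant $c$. Taking real parts, the $\Re$ of a weight-$w$ block lies in Fourier modes $\pm w$ only. Blocks with $w\ge N$ are handled separately. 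Their $\zeta_j^a$ terms with $N\mid b$ can land in mode $a\equiv w\pmod N$, including modes $\le M$. Their total size is nonetheless $\le N\sum_{w\ge N}(C_M\tau)^w$. This sum is bounded by $N(C_M\tau)^N\le N(C_M\tau)^{N-k}$ and is absorbed into the error.

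We then apply the projections, using that $P_k$ and $P_{\ge M+1}$ are uniformly bounded on $\ell^\infty$ (as in the earlier lemma).
\begin{itemize}
\item $P_k$ picks exactly the weight-$k$ block, up to the $w\ge N$ error. (Mode $k$ could also come from weight $N-k$ through aliasing; those weights satisfy $w\ge N-k$, which explains the stated error $N(C_M\tau)^{N-k}$.)
\item $P_{\ge M+1}$ sees only the blocks of weight $\ge M+1$, giving $C_MN\tau^{M+1}$.
\item The mean sees only weights divisible by $N$, giving $C_MN(C_M\tau)^N$.
\end{itemize}

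\textbf{Identifying the linear coefficient.} Within the weight-$k$ block, the only monomial containing $d_{Q,k}$ is $d_{Q,k}$ itself, coming from $m=1$. Its contribution is
\[
 d_{Q,k}\sum_{k'\ne j}S_k(\zeta_j,\zeta_{k'}) .
\]
For $\nu<k$ the summands give $-\zeta_j^k$ each, and the term $\nu=k$ gives $(N-1)\zeta_j^k$. Altogether this is $(k+1)(-1)+N\,\zeta_j^k$ summed appropriately, i.e.\ $(N-1-k)\zeta_j^k$. The remaining monomials of weight $k$ involve only $d_{Q,1},\ldots,d_{Q,k-1}$ and define $\mathcal P_{N,k}$. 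Their coefficients depend on $N$ only through the identity above, which produces a factor of at most $N+C_k$.

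The bound \eqref{eq:PNk-size} follows from the weight-$k$ block estimate $N(C_M\tau)^k$. Care is needed here: $\mathcal P_{N,k}$ itself includes the $-\zeta_j^k$ subtractions, but it is still $O(N)$ times monomials of size $\tau^k$.

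\textbf{Main obstacle.} I expect the hard part to be the bookkeeping of combinatorial constants uniformly in $N$. The weight-$w$ block must be bounded by $N(C_M\tau)^w$ with $C_M$ independent of $N$ and $w$, so that the tail sums converge and no factor like $\binom{m}{\cdot}$ grows with $N$. I would handle this by bounding $|E|^m$ crudely, using
\[
 |E|\le\sum_r (r+1)|d_{Q,r}|\le C_M\tau ,
\]
and using generating functions in the weight variable rather than expanding coefficients explicitly.
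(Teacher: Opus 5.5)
Your proposal is correct and follows essentially the same route as the paper. Both expand $\log(1+U)$ and regroup by weighted degree, use the roots-of-unity identity to show that each weight-$w$ block is a multiple of $\zeta_j^w$, and read off the linear coefficient $(N-k-1)d_{Q,k}$. Both then control the aliased weight $N-k$, the higher modes (weights $\ge M+1$) and the mean (weights divisible by $N$) with the weighted tail bound $N(C_M\tau)^w$.

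One slip is only cosmetic. With the paper's convention $S_k(z,w)=\sum_\nu z^{k-\nu}w^\nu$, it is $\nu=0$ that gives $(N-1)\zeta_j^k$ and $\nu=1,\dots,k$ that give $-\zeta_j^k$, not the other way round; your total $N-(k+1)$ is nevertheless correct.
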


\begin{proof}
Fix a parent $Q$, and write $d_r=d_{Q,r}$.
Throughout the proof, $M$ is fixed, and $N$ is taken
sufficiently large so that $N>2M+2$ and
$C_M\tau<1/2$.

We first recall that
\[
    V_Q(z)-V_Q(w)
    =
    (z-w)\bigl(1+U(z,w)\bigr),
\]
where
$
    U(z,w)=\sum_{r=1}^M d_rS_r(z,w).
$
Since the scalar factor $b_Q$ and the unperturbed
sibling distances contribute only to the constant
Fourier mode, the shape-dependent part of $\Delta_1$
is
\[
    \Delta_1(j;\mathbf d_Q)-\Delta_1(j;\mathbf0)
    =
    \Re\sum_{h\ne j}
    \log\bigl(1+U(\zeta_j,\zeta_h)\bigr).
\]
We will organize this expansion by weighted degree.

\medskip
\noindent
\textit{Step 1. Expansion and weighted-tail estimate.} By \eqref{eq:Sr-definition},
\[
    S_r(z,w)=\sum_{\nu=0}^r z^{r-\nu}w^\nu.
\]
Hence, for $|z|=|w|=1$,
$
    |S_r(z,w)|\leq r+1.$
Using $|d_r|\leq C_M\tau^r$, we obtain
$
    |U(z,w)|\leq C_M\tau.
$ We truncate the logarithmic expansion at order $N$:
\[
    \log(1+U)
    =
    \sum_{\ell=1}^N
    \frac{(-1)^{\ell+1}}{\ell}U^\ell
    +
    R_N(U),
\]
where
$
    |R_N(U)|\leq (C_M\tau)^{N+1}.
$ The index $\ell$ denotes the Taylor order. A term
arising from $U^\ell$ has the form
\begin{equation}\label{eq:D1-general-monomial}
    d_{r_1}\cdots d_{r_\ell}
    S_{r_1}(z,w)\cdots S_{r_\ell}(z,w),
    \qquad 1\leq r_i\leq M.
\end{equation}
Set
$
    m=r_1+\cdots+r_\ell.
$
Thus the  monomial has weighted degree $m$,
and
\[
    |d_{r_1}\cdots d_{r_\ell}|
    \leq C_M^m\tau^m.
\]
Moreover, since each $S_{r_i}$ is homogeneous
of degree $r_i$, their product is homogeneous
of degree $m$ in $(z,w)$. In particular, it can be written as
\[
    S_{r_1}(z,w)\cdots S_{r_\ell}(z,w)
    =
    \sum_{q=0}^m c_q z^{m-q}w^q.
\]

We next estimate the total contribution of all terms
of a fixed weighted degree $m$. This estimate will
be used below to control the higher-order remainders. For fixed $m$, the ordered tuples
$(r_1,\ldots,r_\ell)$ satisfy
\[
    r_1+\cdots+r_\ell=m,
    \qquad 1\leq r_i\leq M.
\]
Summing over all possible Taylor orders $\ell$,
the number of such tuples is at most $2^{m-1}$. Since each term in \eqref{eq:D1-general-monomial}
is bounded by $C_M^m\tau^m$ on the unit circle,
and the factor $1/\ell$ in the logarithmic expansion
is at most $1$, the sum of the absolute values
of all terms of weighted degree $m$ is bounded by
$
    (C_M\tau)^m,
$
after enlarging $C_M$. Consequently, for every $L\geq1$,
the total contribution of all terms of weighted
degree at least $L$ is bounded by
\begin{equation}\label{eq:D1-weighted-tail}
    \sum_{m\geq L}(C_M\tau)^m
    \leq C_M(C_M\tau)^L.
\end{equation}
Here the sum over $m$ is taken over the finite
Taylor expansion. In particular, all possible
Taylor orders $\ell$ have already been accounted
for in \eqref{eq:D1-weighted-tail}.

\medskip
\noindent
\textit{Step 2. Weighted degree and Fourier frequency.} We now explain why a term of weighted degree $m$
produces Fourier frequency $m$, modulo $N$. As observed above, its dependence on $(z,w)$
is a linear combination of
\[
    z^{m-q}w^q,\qquad 0\leq q\leq m.
\]
For the $N$-th roots of unity, we have
\[
    \sum_{h\ne j}\zeta_h^q
    =
    \begin{cases}
       (N-1)\zeta_j^q, & N\mid q,\\
       -\zeta_j^q, & N\nmid q.
    \end{cases}
\]
Therefore,
\[
    \sum_{h\ne j}
    \zeta_j^{m-q}\zeta_h^q
\]
is always a constant multiple of $\zeta_j^m$. 

Thus, after summing over the siblings, every
term of weighted degree $m$ contributes only
to Fourier frequency $m$ modulo $N$. When $m<N$, the identity simplifies to
\[
    \sum_{h\ne j}
    \zeta_j^{m-q}\zeta_h^q
    =
    \begin{cases}
       (N-1)\zeta_j^m, & q=0,\\
       -\zeta_j^m, & 1\leq q\leq m.
    \end{cases}
\]
This identifies the low-frequency contributions
without any reduction modulo $N$.

\medskip
\noindent
\textit{Step 3. The principal contribution to the
$k$-th Fourier mode.} Fix $1\leq k\leq M$. By Step 2, the contribution of weighted degree
$k$ belongs to the $k$-th Fourier mode. The term which is linear in $d_k$ comes from
the linear term $U$ in the logarithmic expansion:
\[
    d_k\sum_{h\ne j}S_k(\zeta_j,\zeta_h).
\]
Using the root-of-unity identities,
\[
\begin{split}
    \sum_{h\ne j}S_k(\zeta_j,\zeta_h)
    &=
    \sum_{\nu=0}^k
    \zeta_j^{k-\nu}
    \sum_{h\ne j}\zeta_h^\nu
\\
    &=
    (N-1)\zeta_j^k-k\zeta_j^k
\\
    &=
    (N-k-1)\zeta_j^k.
\end{split}
\]
Hence the linear contribution is exactly
\[
    (N-k-1)d_k\zeta_j^k.
\]

All other terms of weighted degree $k$ involve
only $d_1,\ldots,d_{k-1}$.
Indeed, any product containing $d_k$ and another
shape parameter has weighted degree greater
than $k$. We denote their combined coefficient by
\[
    \mathcal P_{N,k}(d_1,\ldots,d_{k-1}).
\] We now verify its size and, at the same time,
make the dependence on $N$ explicit. Consider an ordered tuple
$(r_1,\ldots,r_\ell)$ with
$
    r_1+\cdots+r_\ell=k.
$
Write
\[
    S_{r_1}(z,w)\cdots S_{r_\ell}(z,w)
    =
    \sum_{q=0}^k c_qz^{k-q}w^q.
\]
Since each $S_r$ has leading coefficient $1$,
$
    c_0=1.
$
Also, evaluating the product at $z=w=1$ gives
\[
    \sum_{q=0}^k c_q
    =
    \prod_{i=1}^{\ell}(r_i+1).
\]
Thus, after summing over $h\ne j$, the coefficient
of $\zeta_j^k$ is exactly
\[
\begin{split}
    (N-1)c_0-\sum_{q=1}^k c_q
    &=
    N-\prod_{i=1}^{\ell}(r_i+1).
\end{split}
\]
Since $r_1+\cdots+r_\ell=k\leq M$, this
coefficient is bounded by $C_MN$. In particular, the polynomial can be written
explicitly as
\[
\begin{split}
    \mathcal P_{N,k}(d_1,\ldots,d_{k-1})
    =\sum_{\ell=2}^k
    \frac{(-1)^{\ell+1}}{\ell}
    \sum_{\substack{
        r_1+\cdots+r_\ell=k\\
        r_i\geq1
    }}
    \left(
       N-\prod_{i=1}^{\ell}(r_i+1)
    \right)
    d_{r_1}\cdots d_{r_\ell}.
\end{split}
\]
For $k=1$, the sum is empty and
$\mathcal P_{N,1}=0$.

Every monomial in this expression has weighted
degree exactly $k$, and its coefficient depends
only on $N$ and $k$. Furthermore, each parameter product is bounded
by $C_M\tau^k$. Since there are only $O_M(1)$
such terms for $k\leq M$, we obtain
\[
    |\mathcal P_{N,k}(d_1,\ldots,d_{k-1})|
    \leq C_MN\tau^k.
\]
This proves \eqref{eq:PNk-size}. Notice that the factor $N$ comes from the
single sum over the siblings; no additional
factor of $N$ is introduced.

\medskip
\noindent
\textit{Step 4. The remainder in the $k$-th mode.} We have identified all contributions of weighted
degree exactly $k$. We must still account for
higher weighted degrees that may contribute to
the same real Fourier mode after reduction
modulo $N$. Since $\zeta_j^N=1$, frequency $m$ can contribute
to the real $k$-th mode only when
\[
    m\equiv k\pmod N
    \qquad\text{or}\qquad
    m\equiv-k\pmod N.
\]
Because $k\leq M<N/2$, the smallest positive
weighted degree, other than $m=k$, satisfying
this condition is $m=N-k$. Thus every additional contribution to the
$k$-th real mode has weighted degree at least
$N-k$.

By the weighted-tail estimate
\eqref{eq:D1-weighted-tail}, its total size
before summation over siblings is bounded by
\[
    C_M(C_M\tau)^{N-k}.
\]
Summing over the $N-1$ siblings gives
\[
    C_MN(C_M\tau)^{N-k}.
\]
The Fourier projection $P_k$ has uniformly
bounded $L^\infty$ operator norm, so it
introduces no further factor of $N$. Finally, the truncated logarithmic remainder
contributes at most
\[
    C_MN(C_M\tau)^{N+1},
\]
which is absorbed into the preceding bound. We conclude that
\[
\begin{split}
    P_k\Delta_1
    &=
    \Re\left[
       \left(
          (N-k-1)d_k
          +
          \mathcal P_{N,k}(d_1,\ldots,d_{k-1})
       \right)\zeta_j^k
    \right]
\\
    &\qquad+
    O_M\left(N(C_M\tau)^{N-k}\right).
\end{split}
\]

\medskip
\noindent
\textit{Step 5. The higher Fourier modes.} We next estimate $P_{\geq M+1}\Delta_1$. By Step 2, all terms of weighted degree at most
$M$ contribute only to the first $M$ Fourier
modes, since $N>2M$. Therefore,
\[
    P_{\geq M+1}\Delta_1
\]
depends only on terms of weighted degree at
least $M+1$, together with the truncated
Taylor remainder.

By \eqref{eq:D1-weighted-tail}, the total
supremum norm of these terms, after summing
over the siblings, is bounded by
\[
    C_MN\tau^{M+1}.
\]
To see that the projection does not introduce
an additional factor of $N$, recall that
\[
    P_{\geq M+1}
    =
    I-P_0-\sum_{k=1}^M P_k.
\]
Since
\[
    \|P_0\|_{L^\infty\to L^\infty}\leq1,
    \qquad
    \|P_k\|_{L^\infty\to L^\infty}\leq2,
\]
we have
\[
    \|P_{\geq M+1}\|_{L^\infty\to L^\infty}
    \leq 2M+2.
\]
As $M$ is fixed, this factor can be absorbed
into $C_M$. Hence
\[
    \|P_{\geq M+1}\Delta_1\|_\infty
    \leq C_MN\tau^{M+1}.
\]

\medskip
\noindent
\textit{Step 6. Stability of the mean.} Finally, we consider the change in the mean
caused by the shape parameters. By Step 2, every term of weighted degree
$m<N$ contributes a multiple of $\zeta_j^m$.
Its discrete average over $j$ therefore
vanishes. Consequently, a nonzero contribution to
the mean can arise only from weighted
degrees $m\geq N$. Applying \eqref{eq:D1-weighted-tail} with
$L=N$, and then summing over the siblings,
we obtain
\[
    \left|
       \overline{\Delta_1(\mathbf d_Q)}
       -
       \overline{\Delta_1(\mathbf0)}
    \right|
    \leq
    C_MN(C_M\tau)^N.
\]
The Taylor remainder is of still higher order
and is absorbed into the same estimate. This completes the proof.
\end{proof}

\subsection{The exterior field}

\begin{lemma}\label{lem:D2-general}
Assume that
\[
    |d_{Q,r}|\leq C_M\tau^r,
    \qquad 1\leq r\leq M,
\]
and $|\log b_Q|\leq\varepsilon_0$.

Then, for every $1\leq k\leq M$,
\[
    \|P_k\Delta_2\|_\infty
    \leq C_MN\tau^k.
\]
More precisely, there exists a polynomial
$T_{Q,k}$ depending only on
$d_{Q,1},\ldots,d_{Q,k-1}$ such that
\[
\begin{split}
    P_k\Delta_2(j)
    &=
    -\Re\left[
        T_{Q,k}(d_{Q,1},\ldots,d_{Q,k-1})
        \zeta_j^k
    \right]
\\
    &\qquad
    +O_M\left(N(C_M\tau)^{N-k}\right),
\end{split}
\]
where
\[
    \left|
    T_{Q,k}(d_{Q,1},\ldots,d_{Q,k-1})
    \right|
    \leq C_MN\tau^k.
\]

Furthermore,
\[
    \|P_{\geq M+1}\Delta_2\|_\infty
    \leq C_MN\tau^{M+1},
\]
and
\[
    |\overline{\Delta_2}|
    \leq C_MN(C_M\tau)^{N/(M+1)}.
\]
\end{lemma}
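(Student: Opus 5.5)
The plan is to repeat the argument of Corollary~\ref{cor:D2}, now with the deformation $V_Q$ of order $M$. Put $\alpha_Q=\rho b_Q$ and $B_p=\frac{A_p(Q)}{p}(\alpha_Qr_Q)^p$. Lemma~\ref{lem:Ap}, which is stated for polynomial configurations and is valid for all $0<s<1$, gives $|B_p|\le \frac{C_MN}{p}(C_M\tau)^p$ for every $p\ge1$. The expansion \eqref{eq:D2-expansion} then reads
\[
\Delta_2(j)=-\Re\sum_{p\ge1}B_p\,V_Q(\zeta_j)^p,\qquad V_Q(\zeta_j)^p=\zeta_j^p\Bigl(1+\sum_{r=1}^M d_{Q,r}\zeta_j^r\Bigr)^p .
\]
Expanding the $p$-th power produces monomials
\[
B_p\, d_{Q,r_1}\cdots d_{Q,r_a}\,\zeta_j^{p+m},\qquad m=r_1+\cdots+r_a .
\]
Give such a term total weight $p+m$. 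Its size is at most $C_MN(C_M\tau)^{p+m}$, and the number of terms of a given total weight $L$ is at most $C^L$. So, after enlarging $C_M$, the terms of total weight $\ge L$ sum to at most $C_MN(C_M\tau)^L$, exactly as in \eqref{eq:D1-weighted-tail}. The only absolute convergence issue is the factor $(1+\sum_r|d_{Q,r}|)^p\le (1+C_M\tau)^p$, which is harmless.

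\emph{Frequencies.} A term of total weight $L$ is a multiple of $\zeta_j^{L}$, so it contributes only to frequency $L$ modulo $N$.

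\emph{The $k$-th mode, $1\le k\le M$.} The terms with total weight exactly $k$ have $p\ge1$ and $m=k-p\le k-1$. They therefore involve only $d_{Q,1},\dots,d_{Q,k-1}$ together with $B_1,\dots,B_k$, which do not depend on the new shape parameters. Their sum is $T_{Q,k}(d_{Q,1},\dots,d_{Q,k-1})\zeta_j^k$, where
\[
T_{Q,k}=\sum_{p=1}^k B_p\,[\text{coefficient of } z^{k-p}\text{ in } (1+\textstyle\sum_r d_{Q,r}z^r)^p] .
\]
Each summand is at most $C_MN\tau^p\cdot C_M\tau^{k-p}$, which gives $|T_{Q,k}|\le C_MN\tau^k$. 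Any other contribution to the real $k$-th mode requires $L\equiv\pm k\pmod N$ with $L\neq k$, hence $L\ge N-k$. By the tail bound and the uniform $\ell^\infty$ boundedness of $P_k$, these contributions are $O_M(N(C_M\tau)^{N-k})$. This proves the refined identity, and the bound $\|P_k\Delta_2\|_\infty\le C_MN\tau^k$ follows because $N(C_M\tau)^{N-k}\le N\tau^k$ for large $N$.

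\emph{High modes.} Since $N>2M+2$, terms of weight $\le M$ live only in modes $1,\dots,M$. Hence $P_{\ge M+1}\Delta_2$ equals $P_{\ge M+1}$ applied to the weight-$\ge M+1$ part. The tail bound and $\|P_{\ge M+1}\|\le 2M+2$ give $C_MN\tau^{M+1}$.

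\emph{Mean.} Averaging over $j$ kills $\zeta_j^{p+m}$ unless $N\mid p+m$. The crude route, as in Corollary~\ref{cor:D2}, uses that the binomial expansion has $m\le Mp$. So $p+m\le (M+1)p$, and a surviving term needs $p\ge N/(M+1)$. Summing $|B_p|(1+C_M\tau)^p$ over $p\ge N/(M+1)$ gives $C_MN(C_M\tau)^{N/(M+1)}$. Using the weight $p+m\ge N$ would in fact give the sharper $(C_M\tau)^N$, but the stated bound is all that is claimed, so the crude route suffices.

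The main obstacle is bookkeeping rather than estimation. One has to check that the combinatorial counts of monomials of a given weight grow at most geometrically, so that the constants stay independent of $N$, and that the coefficient $T_{Q,k}$ truly has no dependence on $d_{Q,k}$. The latter holds because every term containing $d_{Q,k}$ has $p\ge1$, hence weight $\ge k+1$.
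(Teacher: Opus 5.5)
Your proposal is correct and follows essentially the paper's route: the paper expands $\sum_p B_pV_Q(z)^p=\sum_m T_{Q,m}z^m$ with $|T_{Q,m}|\le N(C_M\tau)^m$, where your ``total weight'' is exactly this degree $m$. Like you, it then identifies $T_{Q,k}$ as depending only on $d_{Q,1},\dots,d_{Q,k-1}$ and controls aliasing, the high modes, and the mean by tail sums. The only minor difference is the mean: the paper bounds it by $\sum_{m\ge N}|T_{Q,m}|\le C_MN(C_M\tau)^N$ and then weakens this, whereas you use the cruder $p\ge N/(M+1)$ argument of Corollary~\ref{cor:D2}; both give the stated estimate.
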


\begin{proof}
Fix $Q\in K_{n-1}$, and write $d_r=d_{Q,r}$.
Set
\[
    B_p=\frac{A_p(Q)}{p}(\rho b_Qr_Q)^p.
\]
By Lemma~\ref{lem:Ap} and the bound on $b_Q$,
\[
    |B_p|\leq N(C_M\tau)^p,
    \qquad p\geq1.
\]

Recall that
\[
    \Delta_2(j)
    =-\Re\sum_{p\geq1}B_pV_Q(\zeta_j)^p,
    \qquad
    V_Q(z)=z\left(1+\sum_{r=1}^M d_rz^r\right).
\]
We first estimate the contributions of each Fourier
frequency. This will give the estimates for the low
modes, the higher modes, and the mean.

\medskip
\noindent
\textit{Fourier coefficients.}
Expanding in powers of $z$, write
\[
    \sum_{p\geq1}B_pV_Q(z)^p
    =
    \sum_{m\geq1}T_{Q,m}z^m.
\]
We claim that
\[
    |T_{Q,m}|\leq N(C_M\tau)^m,
    \qquad m\geq1.
\]

Indeed, a term of degree $m$ arising from
$B_pV_Q(z)^p$ must satisfy
\[
    m=p+r_1+\cdots+r_\ell,
    \qquad 1\leq r_i\leq M.
\]
The corresponding parameter product satisfies
\[
    |d_{r_1}\cdots d_{r_\ell}|
    \leq C_M^{m-p}\tau^{m-p}.
\]
Since there are at most $(M+1)^p$ terms in the
expansion of $V_Q(z)^p$, the total contribution
of degree $m$ from this Taylor order is bounded by
\[
    N(C_M\tau)^m,
\]
after enlarging $C_M$. Moreover, only $1\leq p\leq m$ can contribute
to degree $m$. Summing over these Taylor orders
and absorbing the factor $m\leq2^m$ into $C_M$,
we obtain the claimed estimate.

Consequently, for any $L\geq1$,
\[
    \sum_{m\geq L}|T_{Q,m}|
    \leq
    C_MN(C_M\tau)^L,
\]
provided $C_M\tau$ is sufficiently small.
In particular, the expansion converges absolutely
on the unit circle.

\medskip
\noindent
\textit{The first $M$ Fourier modes.}
Fix $1\leq k\leq M$. Since $V_Q(z)^p$ has lowest degree $p$,
only $1\leq p\leq k$ can contribute directly
to the coefficient $T_{Q,k}$. Moreover, a term of degree $k$ arising from
$V_Q(z)^p$ contains shape parameters of total
weight $k-p\leq k-1$. Thus $T_{Q,k}$ is a polynomial in
$d_1,\ldots,d_{k-1}$, with
\[
    |T_{Q,k}|\leq C_MN\tau^k.
\]
For $k=1$, this polynomial is independent
of the shape parameters.

Since $\zeta_j^N=1$, a higher frequency $m$
can contribute to the same real Fourier mode
only if
\[
    m\equiv k\pmod N
    \qquad\text{or}\qquad
    m\equiv-k\pmod N.
\]
Apart from $m=k$, the smallest possible
positive frequency is $N-k$. The total contribution of these higher
frequencies is therefore bounded by
\[
    \sum_{m\geq N-k}|T_{Q,m}|
    \leq C_MN(C_M\tau)^{N-k}.
\]
It follows that
\[
    P_k\Delta_2(j)
    =
    -\Re\left[
        T_{Q,k}(d_1,\ldots,d_{k-1})
        \zeta_j^k
    \right]
    +
    O_M\left(N(C_M\tau)^{N-k}\right).
\]
In particular,
\[
    \|P_k\Delta_2\|_\infty
    \leq C_MN\tau^k.
\]

\medskip
\noindent
\textit{Higher modes and the mean.}
Every term of degree at most $M$ contributes
only to the first $M$ Fourier modes.
Hence
\[
\begin{split}
    \|P_{\geq M+1}\Delta_2\|_\infty
    &\leq
    \sum_{m\geq M+1}|T_{Q,m}|\\
    &\leq C_MN\tau^{M+1}.
\end{split}
\]
Here we have also included frequencies which
reduce modulo $N$ to lower modes; this only
enlarges the upper bound. Finally, the discrete average of $\zeta_j^m$
vanishes unless $N$ divides $m$.
Thus
\[
\begin{split}
    |\overline{\Delta_2}|
    &\leq
    \sum_{m\geq N}|T_{Q,m}|\\
    &\leq C_MN(C_M\tau)^N\\
    &\leq C_MN(C_M\tau)^{N/(M+1)}.
\end{split}
\]
This completes the proof.
\end{proof}

\subsection{The parent replacement term}

\begin{lemma}\label{lem:D3-general}
Fix $n\ge 1$. Assume that, for every vertex $Q$
of generation at most $n-1$,
\[
    |d_{Q,r}|\le C_M\tau^r,
    \qquad 1\le r\le M,
    \qquad\text{and}\qquad
    |\log b_Q|\le\varepsilon_0.
\]
Then, provided $N$ is sufficiently large
depending on $M$ and the fixed $s<1$,
\[
    \|\Delta_3\|_\infty
    \le C_M N(C_M\tau)^{N/(M+1)},
\]
where the supremum is taken over $K_n$.

In particular, for fixed $M$ and $s<1$,
\[
    \|\Delta_3\|_\infty
    =o_M(N\tau^{M+1})
    \qquad\text{as }N\to\infty.
\]
\end{lemma}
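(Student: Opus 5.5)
We follow the proof of Lemma~\ref{lem:D3}, replacing the quadratic moment cancellation by its polynomial analogue. Fix $Q\in K_{n-1}$, a child $x=x_{Qj}$, and write $\Delta_3(x)=\sum_{y\ne Q}D_y(x)$ with
\[
 D_y(x)=\sum_{k=0}^{N-1}\bigl(\log|y+h_{y,k}-x|-\log|y-x|\bigr),\qquad
 h_{y,k}=\rho b_yr_yV_y(\zeta_k).
\]
Since $|d_{y,r}|\le C_M\tau^r$ and $b_y\simeq1$, we have $|h_{y,k}|\le Cr_y$ uniformly.

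\emph{Step 1: moment cancellation.} We write $V_y(\zeta)^q=\zeta^q\bigl(1+\sum_{r=1}^Md_{y,r}\zeta^r\bigr)^q$. This is a polynomial whose monomials $\zeta^m$ have degrees in the range $q\le m\le (M+1)q$. If $(M+1)q<N$, every such exponent satisfies $1\le m<N$, so $\sum_k\zeta_k^m=0$. Hence
\[
 \sum_{k=0}^{N-1}h_{y,k}^q=0,\qquad 1\le q<\frac N{M+1}.
\]
This is the only point where the order $M$ enters, and it produces the exponent $N/(M+1)$ in place of $N/2$. We set $L=\lceil N/(M+1)\rceil$.

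\emph{Step 2: smallness on coronas.} For $y\in\mathcal Y_\ell(Q)$ we use the separation established in Step~1 of the proof of Lemma~\ref{lem:Ap}, which is valid for polynomial configurations. It gives $|Q-y|\ge c_MR_\ell/N$ with $R_\ell=\lambda^{-\ell}r_Q$. Together with $|x-Q|\le C_Mr_Q=C_M\lambda^\ell R_\ell$ and $N\lambda^\ell\le\tau\ll1$, this yields $|x-y|\ge c_MR_\ell/N$. Since $r_y=r_Q$, we obtain
\[
 \eta_y:=\max_k|h_{y,k}|/|x-y|\le C_M\tau\lambda^{\ell-1}\le\tfrac12 .
\]

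\emph{Step 3: expansion and summation.} We expand
\[
 D_y(x)=-\Re\sum_{q\ge1}q^{-1}(x-y)^{-q}\sum_kh_{y,k}^q .
\]
By Step~1 all terms with $q<L$ vanish, so $|D_y(x)|\le N\sum_{q\ge L}\eta_y^q/q\le C(M+1)\eta_y^{L}$. Using $\#\mathcal Y_\ell(Q)\le N^\ell$ and summing over the coronas,
\[
 |\Delta_3(x)|\le C_M\sum_{\ell\ge1}N^\ell(C_M\tau\lambda^{\ell-1})^{L}
 =C_MN(C_M\tau)^L\sum_{\ell\ge1}(N\lambda^L)^{\ell-1}.
\]
For $N$ large we have $N\lambda^L\le1/2$, since $\lambda^L=N^{-L/s}$ and $L\ge2$. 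This gives the claimed bound $C_MN(C_M\tau)^{N/(M+1)}$, where $C_M\tau<1$ lets us pass from $L$ to $N/(M+1)$.

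\emph{Step 4: comparison with $N\tau^{M+1}$.} We need $(C_M\tau)^{N/(M+1)-M-1}\to0$. Since $\tau=N^{1-1/s}\to0$ and the exponent tends to infinity as $N\to\infty$ with $M,s$ fixed, this holds. Hence $\|\Delta_3\|_\infty=o_M(N\tau^{M+1})$.

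The main step to get right is Step~1, namely the range of exponents produced by $V_y^q$. The remaining steps are a verbatim transfer of Lemma~\ref{lem:D3}. The one additional point to check is that the separation and cylinder-diameter constants remain uniform in $N$ for degree-$M$ deformations, and this is exactly what Step~1 of Lemma~\ref{lem:Ap} provides.
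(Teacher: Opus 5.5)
Your proposal is correct and follows the paper's proof essentially step by step. The degree-range argument gives $\sum_k h_{y,k}^q=0$ for $(M+1)q<N$, the bound $|D_y(x)|\le C_M\eta_y^{L_N}$ uses $N/L_N\le M+1$, and the corona summation uses $N\lambda^{L_N}\le 1/2$; these are exactly the paper's Steps 1--3, and your final $o_M(N\tau^{M+1})$ comparison matches as well.
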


\begin{proof}
Fix $x=x_{Qj}\in K_n$, where $Q\in K_{n-1}$ is its parent.
For each $y\in K_{n-1}\setminus\{Q\}$, denote the displacements
of the children of $y$ by
\[
 h_{y,k}=x_{yk}-y
 =\rho b_y r_y V_y(\zeta_k),
 \qquad
 V_y(z)=z+\sum_{r=1}^M d_{y,r}z^{r+1}.
\]
Since all the points $y$ belong to generation $n-1$, we have
$r_y=r_Q$. Writing
\[
 D_y(x):=\sum_{k=0}^{N-1}
 \bigl(\log|x-y-h_{y,k}|-\log|x-y|\bigr),
\]
we have
\[
 \Delta_3(x)=\sum_{y\in K_{n-1}\setminus\{Q\}}D_y(x).
\]

\smallskip
\noindent\emph{Step 1: cancellation of the low-order moments.}
For every integer $q\ge1$,
\[
 V_y(z)^q
 =z^q\left(1+\sum_{r=1}^M d_{y,r}z^r\right)^q.
\]
Hence every monomial in this polynomial has degree between
$q$ and $(M+1)q$. If $(M+1)q<N$, none of these degrees is a
multiple of $N$. The root-of-unity identities therefore give
\[
 \sum_{k=0}^{N-1}V_y(\zeta_k)^q=0,
 \qquad (M+1)q<N,
\]
and consequently
\begin{equation}\label{eq:D3-general-moments}
 \sum_{k=0}^{N-1}h_{y,k}^q=0,
 \qquad 1\le q<L_N,
 \qquad L_N:=\left\lceil\frac{N}{M+1}\right\rceil.
\end{equation}

\smallskip
\noindent\emph{Step 2: the contribution of one exterior parent.}
Set $w=x-y$ and
\[
 \eta_y=\max_{0\le k<N}\frac{|h_{y,k}|}{|w|}.
\]
We shall verify below that $\eta_y\le1/2$. The absolutely
convergent logarithmic expansion then yields
\[
 \begin{aligned}
 D_y(x)
 &=\sum_{k=0}^{N-1}\Re\log\left(1-\frac{h_{y,k}}{w}\right)=-\Re\sum_{q=1}^{\infty}\frac{1}{qw^q}
       \sum_{k=0}^{N-1}h_{y,k}^q=-\Re\sum_{q=L_N}^{\infty}\frac{1}{qw^q}
       \sum_{k=0}^{N-1}h_{y,k}^q,
 \end{aligned}
\]
where the last equality follows from
\eqref{eq:D3-general-moments}. Thus
\[
 |D_y(x)|
 \le N\sum_{q=L_N}^{\infty}\frac{\eta_y^q}{q}
 \le\frac{N}{L_N}\frac{\eta_y^{L_N}}{1-\eta_y}
 \le C_M\eta_y^{L_N},
\]
because $N/L_N\le M+1$.

\smallskip
\noindent\emph{Step 3: the genealogical coronas.}
For $\ell\ge1$, let $\mathcal Y_\ell(Q)$ be the set of parents
$y\in K_{n-1}$ whose branches first separate from that of $Q$
$\ell$ generations above $Q$. As in the preceding geometric
estimates, put
\[
 R_\ell=\lambda^{-\ell}r_Q,
 \qquad
 \#\mathcal Y_\ell(Q)=(N-1)N^{\ell-1}\le N^\ell.
\]
The separation of distinct sibling branches, together with the
fact that their descendants are contained in balls of radius
$O_M(\lambda R_\ell)$ around the corresponding centers, gives
\[
 |x-y|\ge c_M\frac{R_\ell}{N},
 \qquad y\in\mathcal Y_\ell(Q).
\]
Here $c_M>0$ is independent of $N$, the generation, $Q$ and
$\ell$, provided $\tau=N\lambda$ is sufficiently small.
On the other hand, since $|\log b_y|\le\varepsilon_0$ and
$|d_{y,r}|\le C_M\tau^r$,
\[
 |h_{y,k}|\le C_Mr_Q=C_M\lambda^\ell R_\ell.
\]
It follows that
\begin{equation}\label{eq:D3-general-eta}
 \eta_y\le C_M N\lambda^\ell
 =C_M\tau\lambda^{\ell-1},
 \qquad y\in\mathcal Y_\ell(Q).
\end{equation}
Taking $N$ large enough makes the right-hand side at most
$1/2$ for every $\ell\ge1$, as required in Step 2.

Summing the estimate for $D_y(x)$ over the coronas, we obtain
\[
 \begin{aligned}
 |\Delta_3(x)|
 &\le C_M\sum_{\ell\ge1}
       N^\ell\bigl(C_M\tau\lambda^{\ell-1}\bigr)^{L_N}=C_M N(C_M\tau)^{L_N}
       \sum_{\ell\ge1}(N\lambda^{L_N})^{\ell-1}.
 \end{aligned}
\]
Since $L_N\ge N/(M+1)$, and
$\lambda=N^{-1/s}$ with $s<1$, we have
$N\lambda^{L_N}\le1/2$ for sufficiently large $N$.
Consequently,
\[
 |\Delta_3(x)|
 \le C_MN(C_M\tau)^{L_N}
 \le C_MN(C_M\tau)^{N/(M+1)}.
\]
The estimate is uniform in $x$ and in the generation,
which proves the first assertion.

Finally, for $M$ and $s<1$ fixed, we have
$\tau=N^{1-1/s}\to0$ as $N\to\infty$. Since
$N/(M+1)\to\infty$,
\[
 \frac{(C_M\tau)^{N/(M+1)}}{\tau^{M+1}}\longrightarrow0.
\]
This proves the last assertion and completes the proof.
\end{proof}

\subsection{Finite-order cancellation}

We now choose the shape parameters
$d_{Q,1},\ldots,d_{Q,M}$, keeping the scalar
parameters fixed. By Lemmas~\ref{lem:D1-general}
and~\ref{lem:D2-general}, the principal part
of the $k$-th Fourier mode depends linearly
on $d_{Q,k}$ and otherwise only on
$d_{Q,1},\ldots,d_{Q,k-1}$.
This allows us to choose the shape parameters
successively.
\begin{lemma}[One-pass triangular correction]
\label{lem:one-pass}
Fix $M\geq1$, $0<s<1$, and $n\geq1$.
Suppose that $K_{n-1}$ has been constructed and
that, for every vertex $P$ of generation at most $n-2$,
\[
    |d_{P,r}|\leq C_M\tau^r,
    \qquad 1\leq r\leq M,
    \qquad
    |\log b_P|\leq\varepsilon_0.
\]
For every $Q\in K_{n-1}$, fix a scalar parameter
$b_Q$ satisfying
\[
    |\log b_Q|\leq\varepsilon_0.
\]

Then, for $N$ sufficiently large depending on
$M$ and $s$, the shape parameters can be chosen
successively, in the order $k=1,\ldots,M$, so that
\[
    |d_{Q,k}|\leq C_M\tau^k,
    \qquad 1\leq k\leq M,
\]
and
\begin{equation}\label{eq:general-total-osc}
    \|F_Q-\overline F_Q\|_\infty
    \leq C_MN\tau^{M+1},
    \qquad Q\in K_{n-1}.
\end{equation}

Moreover, keeping all scalar parameters fixed,
the change in the mean satisfies
\begin{equation}\label{eq:general-mean-shape}
    \left|
        \overline{F_Q(\mathbf d,\mathbf t)}
        -
        \overline{F_Q(\mathbf 0,\mathbf t)}
    \right|
    \leq C_MN(C_M\tau)^{N/(M+1)}.
\end{equation}
Here $\mathbf 0$ means that all shape parameters
of generation $n-1$ are set to zero.
The constants are independent of the generation
and of $Q$.
\end{lemma}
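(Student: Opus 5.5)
We choose $d_{Q,1},\dots,d_{Q,M}$ one after another, using the triangular structure in Lemmas~\ref{lem:D1-general} and~\ref{lem:D2-general}. Fix $Q\in K_{n-1}$ and write $F_Q=\Delta_1+\Delta_2+\Delta_3$. The idea is to make the principal part of each of the first $M$ Fourier modes vanish exactly. The only thing left in each mode is then the exponentially small errors.

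\emph{The $k$-th step.} Suppose $d_{Q,1},\dots,d_{Q,k-1}$ are already chosen with $|d_{Q,r}|\le C_M^{(r)}\tau^r$. By Lemma~\ref{lem:D1-general}, $\Delta_1$ contributes $\Re[((N-k-1)d_{Q,k}+\mathcal P_{N,k})\zeta_j^k]$ to $P_kF_Q$. By Lemma~\ref{lem:D2-general}, $\Delta_2$ contributes $-\Re[T_{Q,k}\zeta_j^k]$. Both $\mathcal P_{N,k}$ and $T_{Q,k}$ depend only on $d_{Q,1},\dots,d_{Q,k-1}$, which are now fixed, and on the already fixed $A_p(Q)$ and $b_Q$. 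We define
\[
 d_{Q,k}:=\frac{T_{Q,k}-\mathcal P_{N,k}(d_{Q,1},\dots,d_{Q,k-1})}{N-k-1}.
\]
By \eqref{eq:PNk-size} and the bound on $T_{Q,k}$, this gives $|d_{Q,k}|\le C_M\tau^k$, since $N-k-1\ge N/2$ for $N>2M+2$.

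\emph{The bookkeeping point.} The constants in the two lemmas depend on the a priori bound $|d_{Q,r}|\le C_M\tau^r$. So we must check that the recursion does not enlarge this constant. Since the recursion is finite (length $M$), a constant $C_M^{(k)}$ defined recursively from $C_M^{(1)},\dots,C_M^{(k-1)}$ works. We then fix $C_M:=\max_k C_M^{(k)}$ and apply the lemmas with this $C_M$. The polynomial coefficients $\mathcal P_{N,k}$ and $T_{Q,k}$ are explicit and do not depend on $d_{Q,r}$ for $r\ge k$, so there is no circularity.

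\emph{Residual in the first $M$ modes.} With this choice, $P_k(\Delta_1+\Delta_2)$ is $O_M(N(C_M\tau)^{N-k})$ for every $1\le k\le M$. The term $\Delta_3$ involves the parameters of all other parents, since all of them are changed simultaneously. We handle it with Lemma~\ref{lem:D3-general} applied at generation $n$: its hypotheses hold once all the new $d_{y,r}$ satisfy the bound. This gives $\|P_k\Delta_3\|_\infty\le C_MN(C_M\tau)^{N/(M+1)}$, because $P_k$ is uniformly bounded on $\ell^\infty$. We do not need any Fourier analysis of $\Delta_3$.

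\emph{Higher modes.} Lemmas~\ref{lem:D1-general} and~\ref{lem:D2-general} bound $\|P_{\ge M+1}\Delta_i\|_\infty$ by $C_MN\tau^{M+1}$ for $i=1,2$, and the bound above controls $\Delta_3$. Summing the modes, and using that $N(C_M\tau)^{N-k}$ and $N(C_M\tau)^{N/(M+1)}$ are $o(N\tau^{M+1})$ for large $N$ (last assertion of Lemma~\ref{lem:D3-general}), we get \eqref{eq:general-total-osc}.

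\emph{Mean stability.} For \eqref{eq:general-mean-shape} we compare the three terms separately. The mean statement of Lemma~\ref{lem:D1-general} gives $C_MN(C_M\tau)^N$. For $\Delta_2$, the mean at $\mathbf d$ and the mean at $\mathbf 0$ are each bounded by $C_MN(C_M\tau)^{N/(M+1)}$ by Lemma~\ref{lem:D2-general} (the $\mathbf 0$ case being its special case). For $\Delta_3$, we use the triangle inequality with the sup bound of Lemma~\ref{lem:D3-general}, applied at both parameter choices.

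The main obstacle I expect is the bookkeeping point: making sure the constants do not blow up through the recursion, and that Lemma~\ref{lem:D3-general} really applies with the newly chosen generation-$(n-1)$ parameters. Given the statements as recorded, the rest is direct.
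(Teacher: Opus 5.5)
Your proposal is correct and follows the paper's proof essentially step for step. It uses the same triangular choice $d_{Q,k}=(T_{Q,k}-\mathcal P_{N,k})/(N-k-1)$ with constants defined recursively in $k$, the aliasing and $P_{\ge M+1}$ bounds from Lemmas~\ref{lem:D1-general} and~\ref{lem:D2-general}, and controls $\Delta_3$ only through the sup bound of Lemma~\ref{lem:D3-general} once every parent's parameters are fixed. Mean stability is then checked term by term, and your handling of the constants matches the paper's: the recursion closes because the leading constants in the bounds for $T_{Q,k}$ depend on the bound assumed for earlier generations only through the requirement that $\tau$ be small.
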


\begin{proof}
We choose the shape parameters at each parent
$Q\in K_{n-1}$, keeping all scalar parameters fixed.

\medskip
\noindent
\textit{Step 1. Choosing the shape parameters.} Fix $Q\in K_{n-1}$.
By Lemmas~\ref{lem:D1-general} and
\ref{lem:D2-general}, the contribution to the
$k$-th real Fourier mode of $\Delta_1+\Delta_2$,
before accounting for the exponentially small
aliasing error, is
\[
\begin{split}
    \Re\Big[
    \big(
        &(N-k-1)d_{Q,k}\\
        &+\mathcal P_{N,k}
          (d_{Q,1},\ldots,d_{Q,k-1})\\
        &-T_{Q,k}
          (d_{Q,1},\ldots,d_{Q,k-1})
    \big)\zeta_j^k
    \Big].
\end{split}
\]
Both $\mathcal P_{N,k}$ and $T_{Q,k}$ depend
only on the previously chosen parameters
$d_{Q,1},\ldots,d_{Q,k-1}$.
We therefore define
\[
    d_{Q,k}
    =
    \frac{
        T_{Q,k}(d_{Q,1},\ldots,d_{Q,k-1})
        -
        \mathcal P_{N,k}
          (d_{Q,1},\ldots,d_{Q,k-1})
    }{N-k-1}.
\]
This cancels the displayed coefficient exactly.

We verify the admissibility bounds by induction
on $k$. For $k=1$, since $\mathcal P_{N,1}=0$ and
$T_{Q,1}$ is independent of the shape parameters,
\[
    d_{Q,1}=\frac{T_{Q,1}}{N-2}.
\]
The estimate $|T_{Q,1}|\leq C_MN\tau$ gives
\[
    |d_{Q,1}|\leq C_M\tau.
\]
Suppose now that
\[
    |d_{Q,r}|\leq C_{M,r}\tau^r,
    \qquad 1\leq r<k,
\]
with constants independent of $N$, $n$, and $Q$. The weighted-degree estimates in
Lemmas~\ref{lem:D1-general} and
\ref{lem:D2-general} give
\[
\begin{split}
    &\left|
        \mathcal P_{N,k}
        (d_{Q,1},\ldots,d_{Q,k-1})
    \right|\\
    &\qquad+
    \left|
        T_{Q,k}
        (d_{Q,1},\ldots,d_{Q,k-1})
    \right|
    \leq C_{M,k}N\tau^k.
\end{split}
\]
These estimates involve only the already
determined coefficients. Since $N-k-1\geq N/2$ for $N\geq2M+4$,
the recursive definition yields
\[
    |d_{Q,k}|\leq C_{M,k}\tau^k.
\]
As $M$ is fixed, the induction gives
\[
    |d_{Q,k}|\leq C_M\tau^k,
    \qquad 1\leq k\leq M,
\]
with a uniform constant. 

We perform this construction for every
$Q\in K_{n-1}$.

\medskip
\noindent
\textit{Step 2. Controlling the remaining oscillation.} By construction, the coefficients displayed
in Step 1 vanish for every $1\leq k\leq M$. Now that all shape parameters satisfy the
admissibility bounds, the remainder estimates
of Lemmas~\ref{lem:D1-general} and
\ref{lem:D2-general} apply. They give
\[
    \|P_k(\Delta_1+\Delta_2)\|_\infty
    \leq C_MN(C_M\tau)^{N-k},
    \qquad 1\leq k\leq M.
\]
For sufficiently large $N$, these bounds are
at most $C_MN\tau^{M+1}$.

The same lemmas give
\[
    \|P_{\geq M+1}(\Delta_1+\Delta_2)\|_\infty
    \leq C_MN\tau^{M+1}.
\]
Since $M$ is fixed, combining the low and
higher modes yields
\[
    \left\|
        P_{\mathrm{nc}}(\Delta_1+\Delta_2)
    \right\|_\infty
    \leq C_MN\tau^{M+1}.
\]

It remains to include $\Delta_3$.
This term depends on the new parameters at
all parents of generation $n-1$.
Since they have all been chosen and satisfy
the admissibility bounds,
Lemma~\ref{lem:D3-general} applies:
\[
    \|\Delta_3\|_\infty
    \leq C_MN(C_M\tau)^{N/(M+1)}
    =o_M(N\tau^{M+1}).
\]
Consequently,
\[
    \|F_Q-\overline F_Q\|_\infty
    \leq C_MN\tau^{M+1},
\]
uniformly in $Q$. This proves
\eqref{eq:general-total-osc}.

\medskip
\noindent
\textit{Step 3. Stability of the mean.} We compare the constructed configuration
with the one in which all new shape parameters
are zero, keeping the scalar parameters fixed. By Lemma~\ref{lem:D1-general},
\[
    \left|
        \overline{\Delta_1(\mathbf d)}
        -
        \overline{\Delta_1(\mathbf 0)}
    \right|
    \leq C_MN(C_M\tau)^N.
\] 
By Lemma~\ref{lem:D2-general}, the means of
$\Delta_2$ in both configurations are bounded by
\[
    C_MN(C_M\tau)^{N/(M+1)}.
\]
Lemma~\ref{lem:D3-general} gives the same bound
for $\Delta_3$ in both configurations. Combining these estimates gives
\[
    \left|
        \overline{F_Q(\mathbf d,\mathbf t)}
        -
        \overline{F_Q(\mathbf 0,\mathbf t)}
    \right|
    \leq C_MN(C_M\tau)^{N/(M+1)}.
\]
This proves \eqref{eq:general-mean-shape}
and completes the proof.
\end{proof}

\subsection{Induction in the generations}

Suppose that
\[
    |g_{n-1}(Q)-c_{n-1}|
    \leq A N^{-(n-1)},
    \qquad Q\in K_{n-1},
    \qquad A=K_MN\tau^{M+1},
\]
and that the parameters of the preceding generations
satisfy the admissibility conditions.

We shall choose the new parameters so that the same
estimate holds at generation $n$.

\medskip
\noindent
\textit{Choosing the scalar parameters.} We first set all new shape parameters to zero and
consider the reference configuration with
$b_Q=1$ for every $Q\in K_{n-1}$.
As in Section~\ref{sec:first-mode-induction},
we choose
\[
    c_n=c_{n-1}+\gamma_n,
    \qquad
    \gamma_n=N^{-n}
    \left[
        (N-1)\log(\rho\lambda^{n-1})+\log N
    \right].
\]

Define the reference mean
\[
    H_Q
    :=
    g_{n-1}(Q)
    +N^{-n}\overline F_Q(\mathbf0,\mathbf0).
\]
The local term in this configuration is independent
of $Q$ and equals $N^n\gamma_n$.
By Lemmas~\ref{lem:D2-general} and
\ref{lem:D3-general},
\[
    |H_Q-c_n|
    \leq
    A N^{-(n-1)}
    +C_MN^{-n}N(C_M\tau)^{N/(M+1)}.
\]
For $N$ sufficiently large, the second term is
smaller than $A N^{-(n-1)}$. Hence
\[
    |H_Q-c_n|
    \leq C_MA N^{-(n-1)}.
\]

We now define
\begin{equation}\label{eq:general-tQ}
    t_Q
    =
    \frac{N^n(c_n-H_Q)}{N-1},
    \qquad
    b_Q=e^{t_Q}.
\end{equation}
It follows that
\[
    |t_Q|\leq C_MA.
\]
In particular, $|t_Q|\leq\varepsilon_0$
provided $N\tau^{M+1}$ is sufficiently small,
as verified in the next subsection.

\medskip
\noindent
\textit{Correcting the mean.} Keep the new shape parameters equal to zero.
Changing $b_Q$ from $1$ to $e^{t_Q}$
increases the local term $\Delta_1$ by exactly
$(N-1)t_Q$. By our choice of $t_Q$,
\[
    H_Q-c_n+N^{-n}(N-1)t_Q=0.
\]
Therefore,
\[
\begin{split}
    &g_{n-1}(Q)
    +N^{-n}\overline F_Q(\mathbf0,\mathbf t)-c_n\\
    &\quad=
    N^{-n}\Big[
        \overline{\Delta_{2,Q}}(\mathbf0,\mathbf t)
        -\overline{\Delta_{2,Q}}(\mathbf0,\mathbf0)\\
    &\qquad\qquad+
        \overline{\Delta_{3,Q}}(\mathbf0,\mathbf t)
        -\overline{\Delta_{3,Q}}(\mathbf0,\mathbf0)
    \Big].
\end{split}
\]

Lemma~\ref{lem:D2-general} gives
\[
    |\overline{\Delta_{2,Q}}(\mathbf0,\mathbf t)|
    +
    |\overline{\Delta_{2,Q}}(\mathbf0,\mathbf0)|
    \leq C_MN(C_M\tau)^N.
\]
By Lemma~\ref{lem:D3-general},
\[
    |\overline{\Delta_{3,Q}}(\mathbf0,\mathbf t)|
    +
    |\overline{\Delta_{3,Q}}(\mathbf0,\mathbf0)|
    \leq C_MN(C_M\tau)^{N/(M+1)}.
\]
Consequently,
\begin{equation}\label{eq:general-mean-after-b}
    \left|
        g_{n-1}(Q)
        +N^{-n}\overline F_Q(\mathbf0,\mathbf t)
        -c_n
    \right|
    \leq
    C_MN^{-n}N(C_M\tau)^{N/(M+1)}.
\end{equation}

\medskip
\noindent
\textit{Closing the induction.} We now keep the scalar parameters fixed and
choose the shape parameters by
Lemma~\ref{lem:one-pass}. The mean estimate \eqref{eq:general-mean-shape},
combined with \eqref{eq:general-mean-after-b},
gives
\[
    \left|
        g_{n-1}(Q)
        +N^{-n}\overline F_Q(\mathbf d,\mathbf t)
        -c_n
    \right|
    \leq
    C_MN^{-n}N(C_M\tau)^{N/(M+1)}.
\]

On the other hand, by
\eqref{eq:general-total-osc},
\[
    \|F_Q-\overline F_Q\|_\infty
    \leq C_MN\tau^{M+1}.
\]
For $x=x_{Qj}\in K_n$, recall that
\[
    g_n(x)
    =
    g_{n-1}(Q)+N^{-n}F_Q(j).
\]
It follows that
\[
\begin{split}
    |g_n(x)-c_n|
    &\leq
    N^{-n}|F_Q(j)-\overline F_Q|\\
    &\quad+
    \left|
        g_{n-1}(Q)+N^{-n}\overline F_Q-c_n
    \right|\\
    &\leq
    C_MN^{-n}N
    \left[
        \tau^{M+1}
        +(C_M\tau)^{N/(M+1)}
    \right]\\
    &\leq
    C_MN^{-n}N\tau^{M+1}.
\end{split}
\]
Choosing $K_M$ sufficiently large, we obtain
\[
    |g_n(x)-c_n|\leq AN^{-n},
    \qquad x\in K_n,
\]
which closes the induction.
\subsection{Choice of the parameters}

\begin{lemma}\label{lem:general-parameter-choice}
Fix $M\ge1$ and suppose
\[
 s<\frac{M+1}{M+2}.
\]
With
\[
 \lambda=N^{-1/s},
 \qquad
 \tau=N\lambda=N^{1-1/s},
\]
one has
\[
 \tau\longrightarrow0,
 \qquad
 N\tau^{M+1}\longrightarrow0
\]
as $N\to\infty$.
\end{lemma}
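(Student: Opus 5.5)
This is an exponent computation. We write both quantities as powers of $N$ and check that the exponents are negative.

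For the first limit, since $0<s<1$ we have $1/s>1$. Hence
\[
 \tau=N^{1-1/s}=N^{-(1/s-1)},
\]
and the exponent $-(1/s-1)$ is strictly negative. Therefore $\tau\to0$ as $N\to\infty$.

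For the second limit,
\[
 N\tau^{M+1}=N^{1+(M+1)(1-1/s)}=N^{(M+2)-(M+1)/s}.
\]
This tends to $0$ exactly when $(M+2)-(M+1)/s<0$. Multiplying by $s>0$, the condition is equivalent to $s(M+2)<M+1$, that is, $s<(M+1)/(M+2)$, which is the hypothesis. The exponent is therefore a fixed negative number depending only on $s$ and $M$, and $N\tau^{M+1}\to0$.

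There is no real obstacle here. The only point worth recording is how this lemma is used afterwards. For a prescribed $s<1$ one first chooses $M$ with $(M+1)/(M+2)>s$, which is possible because $(M+1)/(M+2)\to1$. One then takes $N$ large enough that several requirements hold simultaneously: $C_M\tau$ is small; $A=K_MN\tau^{M+1}$ satisfies $C_MA\le\varepsilon_0$, so that $|t_Q|\le\varepsilon_0$; and the exponentially small terms $N(C_M\tau)^{N/(M+1)}$ are dominated by $N\tau^{M+1}$, as already noted in Lemma~\ref{lem:D3-general}. All of these follow from the two limits just established.
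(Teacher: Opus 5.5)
Your proof is correct and follows the paper's argument: you write $\tau=N^{1-1/s}$ and $N\tau^{M+1}=N^{M+2-(M+1)/s}$, and observe that the exponents are negative exactly when $s<1$ and $s<(M+1)/(M+2)$ respectively. Your closing remarks on how $M$ and then $N$ are chosen also match how the paper uses the lemma afterwards.
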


\begin{proof}
Since $s<1$, $\tau=N^{1-1/s}\to0$. Moreover,
\[
 N\tau^{M+1}
 =
 N^{M+2-(M+1)/s},
\]
and the exponent is negative exactly when
\[
 s<\frac{M+1}{M+2}.
\]
\end{proof}
Thus, for every fixed $M$ and
$s<(M+1)/(M+2)$, we may first choose $K_M$ and then take $N$
sufficiently large so that all the smallness and admissibility
conditions above hold. Iterating the construction gives
\[
 \sup_{x\in K_n}|g_n(x)-c_n|
 \le A N^{-n}.
\]
The argument of Section~\ref{sec:first-mode-induction} also shows
that $(c_n)$ is a Cauchy sequence.

\subsection{Passage to the limiting equilibrium measure}

Let
\[
 \mu_n=N^{-n}\sum_{x\in K_n}\delta_x.
\]
The measures $\mu_n$ converge weakly to the natural Bernoulli measure
$\mu$ on the limiting compact set $K$.

The geometric argument of Section~\ref{sec:geometry} is unchanged by the polynomial
corrections. Indeed, for fixed $M$,
\[
 V_Q(\zeta)=\zeta+\sum_{k=1}^M d_{Q,k}\zeta^{k+1},
 \qquad |d_{Q,k}|\le C_M\tau^k,
\]
is a uniformly small perturbation of the identity. Hence the cylinder
diameter and separation estimates of Section~\ref{sec:geometry} remain valid. Since
every generation-$n$ cylinder has mass
\[
 N^{-n}=\lambda^{sn},
\]
the standard cylinder-counting argument gives
\[
 \mu(B(x,r))\simeq r^s,
 \qquad x\in K,\quad 0<r<\operatorname{diam}K.
\]
Thus $K$ and $\mu$ are $s$-Ahlfors regular.

\begin{lemma}\label{lem:general-limit-potential}
The logarithmic potential
\[
 G(z)=\int_K\log|z-y|\,d\mu(y)
\]
is continuous and
\[
 G(x)=c_\infty
 \qquad(x\in K),
\]
where $c_\infty=\lim_n c_n$. Consequently $\mu$ is the logarithmic
equilibrium measure of $K$.
\end{lemma}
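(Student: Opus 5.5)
The plan is to pass from the discrete estimate $\sup_{x\in K_n}|g_n(x)-c_n|\le AN^{-n}$ to the continuous potential by comparing $G$ with $g_n$ on cylinders. First we establish continuity of $G$. By Ahlfors regularity, $\mu(B(z,r))\le Cr^s$ for every $z\in\C$ and $r>0$, so
\[
\int_{|y-z|<\delta}\bigl|\log|z-y|\bigr|\,d\mu(y)\le C\int_0^\delta r^{s-1}|\log r|\,dr\to0
\]
uniformly in $z$ as $\delta\to0$. Writing $G$ as a continuous truncated kernel plus this uniformly small part gives continuity of $G$ on all of $\C$.

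Next, fix $x=\pi(\omega)\in K$ and let $x_n=x_{\omega|n}\in K_n$, so $|x-x_n|\le C\lambda^n$. We decompose
\[
G(x)=\sum_{\substack{y\in K_n\\ y\ne x_n}}\int_{K_y}\log|x-z|\,d\mu(z)+\int_{K_{x_n}}\log|x-z|\,d\mu(z).
\]
The local term is bounded by $CN^{-n}(n|\log\lambda|+1)\to0$. This follows because $\diam K_{x_n}\le C\lambda^n$, and Ahlfors regularity controls the logarithmic singularity by $\int_0^{C\lambda^n}r^{s-1}|\log r|dr$.

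For the exterior cylinders we compare $\int_{K_y}\log|x-z|\,d\mu(z)$ with $N^{-n}\log|x_n-y|$. This is exactly the telescoping used in $\Delta_2,\Delta_3$, and there are two error sources.

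The first is replacing $x$ by $x_n$. This is a $\Delta_2$-type sum, $\sum_{y\ne x_n}N^{-n}\log|x-y|/|x_n-y|$. The most direct route avoids it: since $G$ is continuous, it suffices to show $G(x_n)\to c_\infty$ uniformly, or more simply to compare with $x_m$ for $m\ge n$ and let $m\to\infty$ along the branch.

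The second is replacing $\mu|_{K_y}$ by a point mass at $y$. Here $K_y$ has barycenter $y$ and $\mu|_{K_y}$ is the limit of $N^{-m}\sum_{\Desc_m(y)}\delta$. The resulting error is therefore the sum over generations $k\ge n$ of $\Delta_3$-terms. Lemma~\ref{lem:D3-general} bounds each generation-$k$ contribution by $N^{-k-1}C_MN(C_M\tau)^{N/(M+1)}$, which is summable, with tail $\to0$ as $n\to\infty$.

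Concretely, we would argue as follows. By weak convergence $\mu_m\to\mu$ and the uniform control of near-diagonal logarithmic singularities, $G(x)=\lim_m\int\log|x-y|\,d\mu_m(y)$. The diagonal and near-diagonal part is handled by the same $r^{s-1}|\log r|$ bound applied to $\mu_m$, which satisfies $\mu_m(B(z,r))\le C(r^s+N^{-m})$ from the separation of cylinders. Moreover,
\[
\int\log|x_m-y|\,d\mu_m(y)=g_m(x_m)+O\bigl(N^{-m}|\log\lambda^m|\bigr),
\]
where the error comes from the self-term, handled by excluding $y=x_m$. Hence
\[
\Bigl|\int\log|x_m-y|\,d\mu_m-c_\infty\Bigr|\le AN^{-m}+|c_m-c_\infty|+o(1)\to0.
\]
Finally, $\int\log|x-y|\,d\mu_m-\int\log|x_m-y|\,d\mu_m\to0$ because $|x-x_m|\le C\lambda^m$. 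For $y$ with $|y-x|\ge 2C\lambda^m$ the logarithmic difference is $O(\lambda^m/|x-y|)$, and by Ahlfors regularity of $\mu_m$ at scales $\ge\lambda^m$,
\[
\int_{|x-y|\ge 2C\lambda^m}\frac{\lambda^m}{|x-y|}\,d\mu_m(y)\lesssim \lambda^m\cdot\lambda^{m(s-1)}=\lambda^{ms}\to0,
\]
using $s<1$. The remaining $y$ near $x$ carry mass $O(N^{-m})$ and contribute $O(mN^{-m}|\log\lambda|)$.

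The main obstacle is this uniform control of the near-diagonal logarithmic contributions for the discrete measures $\mu_m$. We would handle it by using the cylinder structure: points of $K_m$ within distance $\sim\lambda^k$ of $x_m$ number at most $CN^{m-k}$, with mutual distances $\ge c\lambda^{k}/N$. This gives $\sum_{k\le m}N^{-k}(k|\log\lambda|+\log N)$-type tails that vanish uniformly when summed over $k\ge k_0$.

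Having $G\equiv c_\infty$ on $K$, we conclude with the standard characterization of the equilibrium measure. $\mu$ is a probability measure on $K$ with finite energy, and $K$ is non-polar since $\mu$ has finite energy. Its potential is constant on $K$ and continuous, so $K$ is regular. By uniqueness in Frostman's theorem, the equilibrium measure is the only probability measure on $K$ whose potential is constant q.e. on $K$, and hence $\mu$ is the equilibrium measure.
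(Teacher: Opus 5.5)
Your proposal is correct and follows essentially the paper's route: continuity comes from the uniform integrability of $\log|z-y|$ against the Ahlfors regular $\mu$, and then $G(x)=\lim_m g_m(x_m)$ follows from $\mu_m\rightharpoonup\mu$ plus near-diagonal control, combined with $|g_m(x_m)-c_m|\le AN^{-m}$ and $c_m\to c_\infty$ (your preliminary cylinder/$\Delta_3$ decomposition is not needed). You give more detail than the paper on comparing $x$ with $x_m$, where you implicitly use $\dist(x,K_m)\gtrsim\lambda^m$ (true because $x$ lies in a child cylinder of $x_m$ at distance $\approx\rho\lambda^m$ from it), and you conclude with Frostman uniqueness for finite-energy measures rather than identifying $G-c_\infty$ as the Green function, which is an equivalent standard step.
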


\begin{proof}
Since $\mu$ is $s$-Ahlfors regular and $s>0$, the logarithmic
singularity is uniformly integrable. In particular $G$ is continuous.

Fix $x\in K$, and let $x_n\in K_n$ be the center associated with the
generation-$n$ cylinder containing $x$. Then $x_n\to x$. The weak
convergence $\mu_n\rightharpoonup\mu$, together with uniform
integrability near the diagonal, gives
\[
 G(x)=\lim_{n\to\infty}g_n(x_n).
\]
Since
\[
 |g_n(x_n)-c_n|\le AN^{-n}
\]
and $c_n\to c_\infty$, it follows that $G(x)=c_\infty$ on $K$.

Thus $G-c_\infty$ is continuous in $\mathbb C$, harmonic in
$\mathbb C\setminus K$, vanishes on $K$, and satisfies
\[
 G(z)-c_\infty=\log|z|+O(1)
 \qquad(|z|\to\infty).
\]
It is therefore the Green function of $\mathbb C\setminus K$ with
pole at infinity, and its Riesz measure is $\mu$. Hence $\mu$ is the
equilibrium measure of $K$.
The passage from the discrete potentials to the limiting logarithmic potential follows by the same argument as in \cite[Section 3]{DJJ23}, using the Ahlfors regularity of \(\mu\) and the separation estimates for the cylinders.
\end{proof}

\begin{proof}[Proof of Theorem~\ref{thm:main}]
Fix $0<s<1$ and choose $M$ so large that
\[
 s<\frac{M+1}{M+2}.
\]
Then choose $N$ sufficiently large. By
Lemma~\ref{lem:general-parameter-choice}, the construction above can
be iterated through all generations.

The preceding discussion shows that the limiting Bernoulli measure
$\mu$ is $s$-Ahlfors regular, and
Lemma~\ref{lem:general-limit-potential} identifies it with the
logarithmic equilibrium measure of $K$. Since $s<1$, the compact set
$K$ does not separate the plane, so $\Omega=\mathbb C\setminus K$ is
connected. Therefore the equilibrium measure is harmonic measure
with pole at infinity:
\[
 \omega^\infty=\mu\approx\mathcal H^s|_K.
\]
\end{proof}

\end{document}